\newtheorem{thm}{Theorem}[section]
\newtheorem{cor}[thm]{Corollary}
\newtheorem{lem}[thm]{Lemma}
\newtheorem*{thmA}{Theorem A}
\newtheorem*{thmB}{Theorem B}
\newtheorem*{thmC}{Corollary C}
\newtheorem{prop}[thm]{Proposition}
\theoremstyle{definition}
\newtheorem{rem}[thm]{Remark}
\theoremstyle{remark}
\numberwithin{equation}{section}
\font\nt=cmr7
\def\note#1{}
\def\note#1
\def\be{\begin{equation}}
\def\ee{\end{equation}}
\renewcommand{\epsilon}{\varepsilon}
\newcommand{\ra}{\rightarrow}
\newcommand{\diam}{\operatorname{diam}}
\newcommand{\dist}{\operatorname{dist}}
\newcommand{\eps}{{\epsilon}}
\newcommand{\de}{{\delta}}
\newcommand{\Om}{{\Omega}}
\renewcommand{\AA}{{\cal A}}
\newcommand{\BB}{{\cal B}}
\newcommand{\DD}{{\cal D}}
\newcommand{\FF}{{\cal F}}
\newcommand{\NN}{{\cal N}}
\newcommand{\PP}{{\cal P}}
\renewcommand{\SS}{{\cal S}}
\newcommand{\C}{{\mathbb C}}
\newcommand{\D}{{\mathbb D}}
\newcommand{\N}{{\mathbb N}}
\newcommand{\R}{{\mathbb R}}
\newcommand{\Z}{{\mathbb Z}}
\def\B0{{\mathbf{0}}}
\renewcommand{\Im}{\operatorname{Im}}
\renewcommand{\Re}{\operatorname{Re}}
\newcommand{\ov}{\overline}
\renewcommand{\ra}{\rightarrow}
\def\Empty{}
\newcommand\oplabel[1]{
  \def\OpArg{#1} \ifx \OpArg\Empty {} \else
  	\label{#1}
  \fi}
\newcommand{\eucl}{{\text{eucl}}}
\newcommand{\leucl}{\ell_{\text{eucl}}}
\newcommand{\lhyp}{\ell_{\Omega}}
\newcommand{\szero}{{{\mathbf{s}}_0}}
\newcommand{\sn}{{{\mathbf{s}}_n}}
\renewcommand{\a}{{\mathbf{a}}}
\newcommand{\an}{{{\a_n}}}
\newcommand{\s}{{\mathbf{s}}}
\newcommand{\stilde}{{\widetilde{\s}}}
\renewcommand{\H}{{\mathbb{H}}}
\newcommand{\gs}{g_{\s}}
\newcommand{\gn}{{g_{\sn}}}
\newcommand{\gst}{g_{\tilde{\s}}}
\newcommand{\gz}{{g_{\szero}}}
\newcommand{\Boettcher}{B\"ottcher }
\newcommand{\rhoeucl}{{\rho_{\text{eucl}}}}
\newcommand{\rhohyp}{{\rho_{\Omega}}}
\newcommand{\sigmam}{{\sigma^{-1}}}
\title{Repelling periodic points and landing of rays for post-singularly
bounded exponential maps}
\author{A. M. Benini, M. Lyubich}
\begin{document}

\maketitle
\begin{abstract}
\emph{We  show that repelling periodic points are landing points of periodic rays for exponential maps whose singular value has bounded orbit. For polynomials with connected Julia sets, this is a celebrated theorem by Douady, for which we  present a new proof. In both cases we also show that points in hyperbolic sets are accessible by at least one and at most finitely many rays. For exponentials this allows us to conclude that the singular value itself is accessible.}
\end{abstract}

\section{Introduction}

Let $f:\C\ra\C$ be an entire function. Then the dynamical plane $\C$ splits into two completely invariant subsets, the Fatou set $\FF(f)$, on which the dynamics is stable, and its complement, the Julia set $J(f)$, on which the dynamics is chaotic.
More precisely, the Fatou set is defined as

\[
\FF(f):=\left\{ z\in\C: \{f^n\}\text{ is normal in a neighborhood of $z$}\right\}.
\] 

\noindent In this paper, we will consider the case in which $f$ is either a polynomial or a complex exponential map $e^z+c$. An important role is played by the  {\em escaping set} 
\[
I(f):=\{z\in\C: \ f^n(z)\ra \infty \text{\ as $n\to \infty$}\}.
\]
\noindent
For polynomials, $I(f)\subset \FF(f)$, while for exponentials $I(f)\subset J(f)$ (see \cite{BR}, \cite{EL}).
However, in both cases the escaping set can be described as an uncountable collection of injective curves, called {\em dynamic rays} or just {\em rays}, which tend to infinity on one side and are equipped with some symbolic dynamics (see Sections~\ref{Accessibility of repelling periodic orbits for polynomials with connected Julia set} and \ref{Dynamic rays in the exponential family}). While in the polynomial case either a point is escaping or its orbit is bounded, in the exponential case  it might well be that orbits form an unbounded set without tending to infinity.

For a polynomial of degree $D$ with connected Julia set, $I(f)$ is an open topological disk centered at infinity, and the dynamics of $f$ on $I(f)$ is conjugate to the dynamics of $z^D$ on $\C\setminus\ov{\D}$ via the B\"ottcher map {(see Section~\ref{Accessibility of repelling periodic orbits for polynomials with connected Julia set})}. In this case, dynamic rays are defined simply as preimages of straight rays under the B\"ottcher map, and the symbolic dynamics on them is inherited from the symbolic dynamics of $z^D$ on the unit circle $\R/\Z$ (see e.g. \cite{Mi}).
For exponentials, we refer the reader to \cite{SZ1} and to Section~\ref{Dynamic rays in the exponential family} of this paper.
A ray  is called {\em periodic} if it is mapped to itself under some iterate of the function.

It is important to understand the interplay between the rays and the set of non-escaping points. 
A ray $g_\s$ is said to {\em land} at a point $z$ if $\ov{g_\s}\setminus g_\s =\{z\}$; conversely, a point is {\em accessible} if it is the landing point of at least one ray. 

 Ideally every ray lands and  every non-escaping point in the Julia set is accessible,  like for hyperbolic maps (in both polynomial and exponential setting).
 One weaker, but very relevant, question to ask is whether all periodic rays land  and whether all repelling/parabolic periodic points are accessible by periodic rays. By the Snail Lemma (see e.g. \cite{Mi}) if a periodic ray lands it has to land at a repelling or parabolic periodic point. 
 
Periodic rays are known to land in both the polynomial and the exponential case (see Theorem 18.10 in \cite{Mi} and  \cite{Re1}), unless one of their forward images contains the singular value.

The question whether repelling periodic points are accessible is  harder and still open in the exponential case; in this paper, we give a  positive answer to this problem for an exponential map $f(z)=e^z+c$ whose  \emph{postsingular set}
 $$\PP(f):={\ov{\underset{n>0}\bigcup f^n(c)}}$$ 
 is bounded.
Observe that in this case the singular value is \emph{non-recurrent}, i.e. $c\notin \PP(f)$.

\begin{thmA}
\label{Theorem A}
Let $f$ be either a polynomial or an exponential map, with bounded postsingular set; then any repelling periodic point is the landing point of at least one and at most finitely many dynamic rays, all of which are periodic of the same period.
\end{thmA}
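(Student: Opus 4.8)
The plan is to separate the statement into three parts and attack them in order of difficulty. Write $O=\{z_0,f(z_0),\dots,f^{n-1}(z_0)\}$ for the periodic orbit, $g=f^{n}$, so that every point of $O$ is a repelling fixed point of $g$, and write $\RR_z$ for the set of rays landing at a point $z$. The three parts are: (a) $\RR_{z_0}\ne\emptyset$; (b) $\RR_{z_0}$ is finite; (c) all rays in $\RR_{z_0}$ are periodic of one common period. I expect (a) to be the genuine obstacle; (b) and (c) are comparatively soft.

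\emph{Softer part: (b)$\Rightarrow$(c), and (b).} First I would observe that $f$ maps $\RR_{z_i}$ \emph{bijectively} onto $\RR_{z_{i+1}}$: a repelling periodic point is not a critical point, and — the singular value having bounded, hence non-recurrent, orbit — not a singular value either, so $f$ is a local homeomorphism near each $z_i$; hence at most one preimage ray of a given ray landing at $z_{i+1}$ can pass near, so land at, $z_i$ (injectivity), and lifting the germ at $z_{i+1}$ through this local inverse produces one that does (surjectivity). Thus $g$ permutes $\RR_{z_0}$, and since $g$ is holomorphic near $z_0$ it preserves the cyclic order in which the rays of $\RR_{z_0}$ emanate from $z_0$; if $\RR_{z_0}$ is finite, $g$ acts on it as an orientation-preserving cyclic permutation — a combinatorial rotation — whose cycles all have equal length, which gives (c). For finiteness (b): if $\RR_{z_0}$ were infinite, the germs at $z_0$ of its rays would form, in the linearizing coordinate at $z_0$, an infinite family of pairwise disjoint arcs landing at $0$ and invariant as a family under the expansion $\zeta\mapsto\lambda\zeta$; the combinatorial rotation number of $g$ on this cyclically ordered family, together with the expansion, rules this out (an invariant Cantor family of germs would force an irrational rotation number incompatible with the expansion, while a finite rotation number is incompatible with the disjointness), so $\RR_{z_0}$ is finite.

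\emph{The heart: existence (a).} If $z_0$ is already the landing point of a periodic ray we are done, so assume not. As recalled in the Introduction, some periodic ray does land — at a repelling or parabolic periodic point — and in our setting with no exception, since the singular value, having bounded orbit, lies on no ray, so no forward image of a ray can contain it. Fix such a periodic ray $R_\ast$; by assumption its landing point is $\ne z_0$, and since $R_\ast\subset I(f)$ while $z_0\in J(f)\subseteq\ov{I(f)}$ we get $\dist(z_0,R_\ast)>0$. Choose a round neighborhood $U_0$ of $z_0$ disjoint from $R_\ast$, small enough to support a linearizing chart $\psi\colon(\D_\rho,0)\to(U_0,z_0)$ with $g\circ\psi(\zeta)=\psi(\lambda\zeta)$, $|\lambda|>1$, and to contain no periodic point of period $\le n$ besides $z_0$; let $h$ be the branch of $g^{-1}$ fixing $z_0$, so $h\circ\psi(\zeta)=\psi(\lambda^{-1}\zeta)$ maps $U_0$ compactly into itself, contracting towards $z_0$. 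Pick $w\in I(f)\cap U_0$ so deep inside that $w\in h(U_0)$; it lies on a ray with combinatorial datum (external angle for polynomials, external address for exponentials) $\theta_0$. Set $w_j=h^{j}(w)\to z_0$ and let $\theta_j$ be the datum of the ray $R^{(j)}$ through $w_j$ obtained by continuing the branch $h^{j}$ of $g^{-j}$ along $R^{(0)}$ (preimages of rays are rays; one keeps the pullback staying near $z_0$; for polynomials $g$ is an unbranched self-covering of the basin of infinity, for exponentials one uses the covering structure and the exponential-boundedness of admissible addresses from \secref{Dynamic rays in the exponential family}). With $V_j=h^{j}(U_0)$ and $\Theta(V)$ the set of data of rays meeting an open set $V$, the key technical step is $\diam\Theta(V_j)\to0$: since $g^{j}\colon V_j\to U_0$ is univalent and multiplies data by a definite factor, this follows once $\Theta(U_0)$ is "sub-maximal", which the ray $R_\ast$ missing $U_0$ provides; in the exponential case this contraction of combinatorial data is made quantitative through the hyperbolic metric of $W:=\C\setminus\PP(f)$ (with the omitted value $c$ also removed), for which the relevant inverse branch of $g$ is non-expanding and strictly contracting away from $\PP(f)$ — this is where the boundedness of $\PP(f)$ is used. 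Since $w_j$ and $w_{j+1}$ both lie in $V_{j+1}$, we get $|\theta_{j+1}-\theta_j|\le\diam\Theta(V_{j+1})\to0$ summably, so $\theta_j\to\theta_\infty$. Finally $g$ sends $\theta_j$ to $\theta_{j-1}$, so $\theta_\infty$ is fixed by $g$: $R^{(\infty)}:=R_{\theta_\infty}$ is periodic, hence lands (at a repelling or parabolic periodic point). But $w_j\in R^{(j)}$ at escape rate (potential) tending to $0$, $w_j\to z_0$, and $\theta_j\to\theta_\infty$, so $z_0$ belongs to the impression of $R^{(\infty)}$ — which, $R^{(\infty)}$ being a landing ray, equals its landing point. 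Hence $R^{(\infty)}$ lands at $z_0$, proving (a).

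\emph{Where the difficulty concentrates.} The load-bearing estimate is $|\theta_{j+1}-\theta_j|\le\diam\Theta(V_{j+1})\to0$. A priori, rays with nearby combinatorial data need not be close in the plane, nor conversely, so one must control, uniformly in $j$, how the datum of \emph{any} ray entering the shrinking neighborhood $V_j$ of $z_0$ is pinned down. For polynomials this is the finite-degree, sub-maximal-arc estimate indicated above; for exponential maps it must be replaced by a hyperbolic-geometry argument in $\C\setminus\PP(f)$ minus the omitted value — which is exactly the place the hypothesis "$\PP(f)$ bounded" enters — together with the exponential-boundedness of admissible external addresses and care near the omitted value $c$. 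The finiteness step (b) is comparatively routine once the linearization and the cyclic-order structure of rays at $z_0$ are in place.
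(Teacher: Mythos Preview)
Your overall architecture --- linearize at the repelling point, pull back an escaping point along the contracting branch $h$, extract a limiting ray, then read off (b) and (c) from the cyclic-order structure at $z_0$ --- is indeed the skeleton of the paper's proof. The load-bearing step in (a), however, does not hold as you state it. You claim $\diam\Theta(V_j)\to 0$, where $\Theta(V)$ is the set of data of \emph{all} rays meeting $V$. But $g^{j}\colon V_j\to U_0$ acts on angles (polynomial case) as multiplication by $D^{nj}$, so $\Theta(V_j)$ sits inside the full $(D^{nj})$-fold preimage of $\Theta(U_0)$: this is a union of $D^{nj}$ arcs spread around $\R/\Z$, of total diameter close to $1$. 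That a single periodic ray $R_\ast$ misses $U_0$ only removes one angle from $\Theta(U_0)$ and gives no contraction. Concretely, whenever $J(f)$ is not locally connected at $z_0$, rays with widely separated angles pass through every neighbourhood of $z_0$, so $\diam\Theta(V_j)\not\to 0$. Hence you cannot deduce that $(\theta_j)$ is Cauchy; and even granting convergence, your impression argument does not close: $w_j$ lies on $g_{\theta_j}$ at potential $t_j\to 0$, whereas transversal continuity of rays (Lemma~\ref{Angle-space continuity} / Lemma~\ref{Convergence of rays}) is only available on compact potential intervals bounded away from $0$, so ``$w_j\to z_0$, $\theta_j\to\theta_\infty$'' does not place $z_0$ on $\overline{g_{\theta_\infty}}$.

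The paper repairs both points at once by tracking \emph{fundamental domains} $I_t(g_\s)$ rather than single points, and by taking \emph{accumulation points} of $(\theta_j)$ rather than proving convergence. The key preliminary is that $\leucl(I_t(g_\s))\to 0$ uniformly as $t\to 0$ (Proposition~\ref{Fundamental domains shrinking} via the hyperbolic metric on $\C\setminus K(f)$; in the exponential case Proposition~\ref{Fundamental domains shrinking for exponentials}, which rests on the substantial estimates of Propositions~\ref{Bounded fundamental domains for exponentials}--\ref{Branches of the logarithm} and is where boundedness of $\PP(f)$ genuinely enters). A short induction then shows the entire arc $g_{\theta_j}\bigl(F^{-nj}(t_0),F(t_0)\bigr)$ stays in the linearizing neighbourhood, with $I_{F^{-m}(t_0)}(g_{\theta_j})\subset B(z_0,C\mu^{-m})$ for all $m\le j$; passing to an accumulation angle $\theta_\infty$ via transversal continuity at each fixed potential (Lemma~\ref{landing}) gives landing. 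Periodicity is \emph{not} automatic from your scheme, since you do not have actual convergence of $\theta_j$: the paper instead shows the accumulation set $\AA$ is closed, forward invariant, and that $\sigma|_\AA$ is bijective (local injectivity of $f$ near $z_0$), and then applies the general fact (Lemma~\ref{Rotation sets}) that a locally expanding map restricted invertibly to a compact invariant set forces that set to be finite. In the exponential case one must first prove the addresses $\theta_j$ are uniformly bounded (Proposition~\ref{Families of bounded addresses}) so that the accumulation set is compact --- a step your sketch omits. Your treatment of (b) and (c), modulo phrasing, is the paper's Lemma~\ref{One for all} and is fine once a periodic landing ray is in hand.
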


For polynomials with connected Julia set, all repelling periodic points are known to be accessible by a theorem due to Douady (see \cite{H}). Another proof due to Eremenko and Levin can be found in \cite{ELv}:  their proof covers also the case in which the Julia set is disconnected. However, neither proof can be generalized to the exponential family, because both use in an essential way the fact that the basin of infinity is an open set.
Our proof of  Theorem A also gives a new proof in the polynomial setting (see Section~\ref{Accessibility of repelling periodic orbits for polynomials with connected Julia set}).

Our second result is about accessibility of hyperbolic sets.
A forward invariant compact set  $\Lambda$ is called \emph{hyperbolic} (with respect to the Euclidean metric) if there exist $k\in\N$ and  $\eta>1$ such that  $|(f^k)'(x)|>\eta$ for all $x\in\Lambda$, $k>\ov{k}$. {Observe that hyperbolic sets are subsets of the Julia set.}

\begin{thmB}
\label{Theorem B}
Let $f$ be either a polynomial or an exponential map, with bounded postsingular set. Then any point that belongs to  a hyperbolic set is accessible.
\end{thmB}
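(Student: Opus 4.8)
The plan is to reduce accessibility of a point $x$ in a hyperbolic set $\Lambda$ to the construction of a single ray accumulating at $x$, and then to upgrade accumulation to landing by a hyperbolic-expansion argument. First I would replace $f$ by an iterate so that we may assume $|f'|>\eta>1$ uniformly on a small neighborhood $U$ of $\Lambda$, with $U$ chosen compactly contained in $\C$ (possible since $\Lambda\subset J(f)$ is compact and, by the postsingular boundedness hypothesis, can be taken away from the orbit of the singular value $c$). The key topological input is that the rays foliating the escaping set come arbitrarily close to $J(f)$, so $x$ lies in the closure of the union of rays; the goal is to pick a combinatorially coherent family of ray-segments shadowing the orbit of $x$.

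The core of the argument is an \emph{inverse-branch shadowing} (pull-back) construction. Fix a ray segment $\gamma_0$ with one endpoint very close to $x$ and of small Euclidean length, lying in the complement of $\Lambda$ but inside $U$. Using the fact that $x$ is non-escaping with bounded orbit while $\gamma_0$ reaches into the escaping set, I would choose at each step an appropriate inverse branch of $f$ defined along the forward orbit $x, f(x),\dots$ and pull $\gamma_0$ (together with the forward orbit of $x$) back along it; because $f$ is locally expanding near $\Lambda$, the pulled-back segments $\gamma_n$ contract geometrically in a neighborhood of the orbit, so their diameters are summable. Concatenating them produces a curve $\Gamma$ landing at $x$ — or more precisely, accumulating only at $x$ — and one checks that $\Gamma$ is (an arc of) a dynamic ray by tracking its symbolic/combinatorial description, which is inherited from that of $\gamma_0$ under the inverse branches. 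The absence of the singular value from $U$ is exactly what guarantees the inverse branches are well defined and univalent along the whole orbit, so the exponential and polynomial cases are handled uniformly.

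The main obstacle I anticipate is controlling the pull-back \emph{globally} rather than just near $\Lambda$: the orbit of $x$ stays in $U$, but a ray segment emanating from near $x$ may initially leave $U$ before the expansion estimates apply, and in the exponential case one must also ensure the inverse branches do not collide with the logarithmic singularity at infinity. Handling this requires a distortion/Koebe-type estimate (in the polynomial case) or the analogous hyperbolic-metric contraction estimate for the exponential (using that $\C\setminus\PP(f)$ is hyperbolic and $f$ is expanding for the hyperbolic metric there), together with a careful choice of $\gamma_0$ short enough that its entire pull-back orbit remains in a region where these estimates hold. A secondary point is to verify that the landing point is exactly $x$ and not some other point of $\overline{\Gamma}$; this follows from the geometric contraction, but one should check that the tails of $\Gamma$ nest down to the single point $x$ and that $\Gamma$ does not spiral.

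Finally, I would note that Theorem~A's finiteness statement does not feed into this argument — Theorem B asserts only existence of one accessing ray — but the same pull-back scheme, applied to a periodic point, recovers the accessibility half of Theorem~A and in the exponential case yields accessibility of the singular value $c$ itself, since $c$ has bounded (hence non-recurrent) orbit and $f(c)$ can be put in the hyperbolic regime; this is presumably how the last sentence of the abstract is deduced.
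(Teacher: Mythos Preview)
Your proposal has the right ingredients (pass to an iterate for uniform expansion on a neighborhood of $\Lambda$, use inverse branches, contract via local expansion), but the pull-back construction is set up in a way that does not produce a landing ray. If $\gamma_0$ is a ray segment near $x=x_0$, there is no inverse branch sending a neighborhood of $x_0$ into itself unless $x_0$ is periodic, so ``pulling $\gamma_0$ back along the forward orbit'' is not well-defined as stated. The paper instead places a fundamental domain $I_{t_0}(g_{\s_n})$ near each \emph{forward} iterate $x_n=f^n(x_0)$ and pulls it back $n$ steps to $x_0$ via the branches $x_n\mapsto x_{n-1}\mapsto\cdots\mapsto x_0$. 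The resulting arc lies on a ray $g_{\a_n}$ with $\sigma^n\a_n=\s_n$, and for different $n$ these arcs lie on \emph{different} rays, so they cannot be concatenated into a single curve $\Gamma$ that one then recognizes as a dynamic ray. The landing ray is obtained not by concatenation but by passing to a \emph{limit} $\a$ of the sequence $\{\a_n\}$ in the angle/address variable, using the nesting estimate $I_{t_m}(g_{\a_n})\subset B(x_0,\delta/\eta^m)$ for $m\le n$ together with Lemma~\ref{landing} and compactness of $\R/\Z$ in the polynomial case.

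In the exponential case there is a further obstacle your sketch does not address: the addresses $\a_n$ live in $\Z^\N$, which is not compact, so one must prove $\|\a_n\|_\infty$ is uniformly bounded in order to extract a convergent subsequence at all. This is the content of Section~\ref{Bounds on fundamental domains for exponentials}: one shows that all pullbacks of a fixed ray with bounded address stay in a right half-plane with controlled real part (Proposition~\ref{Branches of the logarithm}), that the analogue of Proposition~\ref{Fundamental domains shrinking} holds for this family when restricted to a compact set (Proposition~\ref{Fundamental domains shrinking for exponentials}), and finally that the addresses arising in the construction are uniformly bounded (Proposition~\ref{Families of bounded addresses}). The hyperbolic-metric contraction on $\C\setminus\PP(f)$ that you mention is indeed one ingredient here, but by itself it gives neither the address bound nor the existence of a limiting ray.
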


Theorem B for polynomials is a special case of  Theorem C in \cite{Pr}.
Under an additional combinatorial assumption (only needed for polynomials, {since having bounded postsingular set is a stronger assumption in the exponential case than in the polynomial case}) we also show that there are only finitely many rays landing at each point belonging to a hyperbolic set (see Propositions~\ref{Finitely many rays} and~\ref{Finiteness exp}). This is a new result {not only in the exponential case but} also in the polynomial case.

For an exponential map whose postsingular set is bounded and contained in the Julia set,  the postsingular set itself is hyperbolic (see \cite{RvS}, Theorem 1.2). We obtain hence the following  corollary of Theorem B:

\begin{thmC}
Let $f$ be an exponential map with bounded postsingular set. Then every point in the postsingular set, and in particular the singular value itself, are accessible.
\end{thmC}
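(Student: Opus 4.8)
The plan is to reduce Corollary~C to Theorem~B, the only extra inputs being the hyperbolicity of $\PP(f)$ supplied by \cite{RvS} and one lifting argument through the exponential covering $f\colon\C\to\C\setminus\{c\}$, where $c$ is the singular value. By hypothesis the orbit of $c$ is bounded, so $\PP(f)=\overline{\bigcup_{n>0}f^{n}(c)}$ is a bounded, closed and forward invariant set, with $f(c)\in\PP(f)$ and $c\notin\PP(f)$ (recall $c$ is non-recurrent, and $c$ is an omitted value, hence never periodic). We may assume $\PP(f)\subset J(f)$, equivalently $c\in J(f)$: this is the only case in which \cite{RvS} applies, and it is the only interesting one, since if $c$ were in the Fatou set then, as $f(c)\in\PP(f)$ and the Fatou set is completely invariant, $c$ would lie in $\FF(f)$, while the closure of a dynamic ray is contained in $\overline{I(f)}\subseteq J(f)$, so a Fatou point can never be a landing point.

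First I would apply Theorem~1.2 of \cite{RvS}: since $\PP(f)$ is bounded and contained in $J(f)$, it is hyperbolic in the sense of this paper. As it is also compact and forward invariant, Theorem~B applies to it, and every point of $\PP(f)$ is accessible. This settles all points of $\PP(f)$; it remains only to produce a dynamic ray landing at $c$ itself, which is not a point of $\PP(f)$.

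To that end I would pull back a ray landing at $f(c)$. Let $g_{\s}$ be a dynamic ray landing at $f(c)$, which exists by the previous step. Since $f(c)\neq c$ (else $c$ would be a fixed point, impossible for an omitted value) and $c\notin g_{\s}$ (because $g_{\s}\subseteq I(f)$ while $c$ has bounded orbit), the closed arc $\overline{g_{\s}}=g_{\s}\cup\{f(c)\}$ lies in $\C\setminus\{c\}$. As $f$ is a covering of $\C\setminus\{c\}$ and $\overline{g_{\s}}$ is simply connected, $f^{-1}(\overline{g_{\s}})$ is a disjoint union of arcs, each carried homeomorphically onto $\overline{g_{\s}}$ by $f$; exactly one of them has $c$ as its endpoint over $f(c)$, since $c$ is one of the preimages of $f(c)$. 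Call this arc $\overline{\tilde g}$ and put $\tilde g:=\overline{\tilde g}\setminus\{c\}$; then $f(\tilde g)=g_{\s}\subseteq I(f)$, so $\tilde g\subseteq I(f)$, hence $c\notin\tilde g$ and $\overline{\tilde g}\setminus\tilde g=\{c\}$, i.e.\ $\tilde g$ lands at $c$. Finally, by the structure of dynamic rays in the exponential family (see \cite{SZ1} and Section~\ref{Dynamic rays in the exponential family}), the connected components of the preimage of a dynamic ray whose closure avoids the singular value are again dynamic rays, with external address a one-sided shift-preimage of that of $g_{\s}$; thus $\tilde g$ is a genuine dynamic ray landing at $c$, and $c$ is accessible. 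Combined with the previous paragraph, this gives accessibility of every point of $\PP(f)$, which is the assertion of Corollary~C.

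I expect no serious obstacle: Theorem~B and \cite{RvS} carry the essential content, and the only step requiring an argument is the lift in the last paragraph. The single point to watch there is that the singular value should lie neither on the ray $g_{\s}$ nor at its landing point, which is exactly what makes the lift clean; here this holds automatically because $c$ is non-escaping and is not a fixed point.
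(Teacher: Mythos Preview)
Your proof is correct and follows the same route as the paper: invoke \cite{RvS} to see that $\PP(f)$ is hyperbolic, then apply Theorem~B. The paper treats Corollary~C as immediate and does not spell out the extra step for $c$ itself (recall $c\notin\PP(f)$ under the paper's convention); your lifting argument is a clean way to supply it, and an equally short alternative is to note that $\{c\}\cup\PP(f)$ is still compact, forward invariant, and hyperbolic (since $f'(c)=e^{c}\neq 0$ and $f(c)\in\PP(f)$), so Theorem~B already applies to the enlarged set.
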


Part of the importance of Theorem A is that it gives indirect insight on the structure of the parameter plane. For example, for unicritical polynomials, it implies that there are no irrational subwakes attached to hyperbolic components (see Section I.4 in \cite{H}, Theorem 4.1 in \cite{S1}).
The proof  in \cite{S1} is combinatorial and  can be applied to the exponential family (see Section~\ref{A remark about parabolic wakes}). The results in this paper are also used in \cite{Be} to show rigidity for non-parabolic exponential parameters with bounded postsingular set.

The structure of this article is as follows:
in Section~\ref{Accessibility of repelling periodic orbits for polynomials with connected Julia set} we introduce some background about polynomial dynamics; we then  present the  new proof of Douady's theorem about accessibility of repelling periodic points, followed by the proof of Theorem B in the polynomial case. The proof in this paper only uses quite weak information about the structure of dynamic rays, opening up this result to be generalized to other families of functions beyond the exponential family.
In Section~\ref{Dynamic rays in the exponential family} we recollect some facts on exponential dynamics, including existence and properties of dynamic rays in this case. In Section~\ref{Accessibility for exponential parameters with bounded postsingular set} 
we state and prove Theorems A and B in the exponential setting.
More precisely, in  Section~\ref{Bounds on fundamental domains for exponentials} we make some estimates about the geometry of rays near infinity that are needed to prove Theorems A and B for exponentials; the proofs themselves are presented in Section~\ref{Proof of accessibility theorems}.

We  denote by $\leucl(\gamma)$ the Euclidean length of a curve $\gamma$ and by $\ell_\Omega(\gamma)$ its hyperbolic length in a region $\Omega$ admitting the hyperbolic metric with density $\rho_\Omega$. A ball of radius $r$ centered at a point $z$ is  denoted by either $B_r(z)$ or $B(z,r)$.

\subsection*{Acknowledgments}
The first author would like to thank  Carsten Petersen for interesting discussions, as well as  the IMS  in Stony Brook and the IMATE in Cuernavaca for hospitality. We would also like to thank the referee for suggesting several improvements and clarifications. 
This work was partially supported by NSF. The first author was partially supported by the ERC grant HEVO - Holomorphic Evolution Equations n. 277691. 

\section{Accessibility of repelling periodic orbits for polynomials with connected Julia set}
\label{Accessibility of repelling periodic orbits for polynomials with connected Julia set}

In this section we  give a new proof of Douady's   theorem  for polynomials with connected Julia set, showing that  any repelling periodic orbit is the landing point of finitely many periodic rays. 

Let $f$ be a  polynomial of degree $D$ with connected Julia set $J(f)$ and filled Julia set  $K(f)$.  As $K(f)$ is full and contains more than one point, $\Omega=\C\setminus   K(f)$ is a domain that  admits a hyperbolic metric with some density $\rho_\Omega(z)$. Since $f:\C\setminus K(f) \ra \C\setminus K(f) $ is a covering map, it locally preserves the hyperbolic metric.

Since $K(f)$ is connected there exists a unique conformal isomorphism ({(see \cite[Chapter 18]{Mi} and \cite[Theorem 9.5]{Mi}}), called the  B\"ottcher function $B$, which  conjugates the dynamics of $f$ on $\C\setminus   K(f)$ to the dynamics of $z^D$ on $\C\setminus  \ov{\D}$ and which is asymptotic to the identity map at infinity. 

Points in $\C\setminus \ov{\D}$ can be written as $z=e^t e^{2\pi i \s}$ with $t\in (0,\infty)$ and $\s\in \R/\Z$. We refer to $t$ as the \emph{potential} of $z$ and to $\s$ as the \emph{angle} of $z$. Consider the straight ray of angle $\s$ parametrized as  $R_\s(t)=e^t e^{2\pi i \s}$, with fixed $\s\in \R/\Z$  and $t\in(0,\infty)$; we define the curve  $g_\s(t):=B^{-1}(R_\s(t))$   to be  the \emph{dynamic ray} of angle $\s$, and we keep referring to $t$ as the potential.

The space $\Sigma_D$ of infinite  sequences over $D$ symbols projects naturally to  $\R/\Z$ via the $D$-adic expansion of numbers in $\R/\Z$, and this projection is one-to-one except for countably many points corresponding to $D$-adic numbers. 
The same  projection semiconjugates multiplication by $D$ on angles in $\R/\Z$  to the  left-sided  shift map $\sigma$ acting on $\Sigma_D$.
In this paper we will use both models. The sequences in $\Sigma_D$ in our paper represent angles, so we will often use the term 'angle'  when referring to them.

For $\s=s_0 s_1 s_2\ldots$ and $\s'=s'_0 s'_1 s'_2\ldots$ in $\Sigma_D$ we consider the distance
\begin{equation}\label{D metric}
 |\s-\s'|_D=\underset{i}\sum \frac{|s_i-s_i'|}{D^{i+1}}.
\end{equation}

We will denote by $|\s-\s'|$ the usual distance between  angles in $\R/\Z$. 
A continuous  map $\phi$ acting on a compact metric space $(X,d)$ is called \emph{locally expanding} if there exists $\rho>1$ and $\epsilon>0$ such that
$
  d(\phi( x), \phi(y)) \geq \rho\, d(x,y)
$
for any two points $x,y\in X$ with $d(x,y)< \epsilon$. 
 Both  the shift map $\sigma$ on $\Sigma_D $ and multiplication by $D$ on $\R/\Z$ are locally expanding by a factor $D$. 

Define the radial growth function as $F: t\mapsto D t$. Then since  $R_\s(t)\mapsto R_{\sigma\s}(F(t))$ under the map $z\mapsto z^D$ we also have 
\begin{equation}\label{Functional} f(g_{\s}(t))=g_{\sigma \s}(F(t)).\end{equation}

A \emph{fundamental domain} starting at $t$ for a ray $g_{\s}$ is the arc $g_{\s}([t, F(t)))$, and is  denoted by $I_t(g_{\s})$.

The following lemma relates convergence of angles to convergence of dynamic rays.

 \begin{lem}\label{Angle-space continuity}
 Let $f$ be a polynomial of degree $D$.
 For each $t_*, t^*>0$, the rays $g_{\sn}(t)$ converge uniformly to the ray $g_\s(t)$ on $[t_*,t^*]$ as $\sn \ra \s$. 
 \end{lem}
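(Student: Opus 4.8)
The plan is to use the Böttcher function $B$ to transport everything to the exterior of the closed unit disk, where the analogous statement is nearly trivial, and then use compactness plus distortion control to make the convergence uniform. First, fix $t_*, t^* > 0$ and work on the compact parameter interval $[t_*, t^*]$. Recall $g_{\s_n}(t) = B^{-1}(R_{\s_n}(t))$ and $g_\s(t) = B^{-1}(R_\s(t))$, where $R_{\s_n}(t) = e^t e^{2\pi i \s_n}$. Since $B^{-1}$ is a conformal isomorphism of $\C \setminus \ov{\D}$ onto $\C \setminus K(f)$, asymptotic to the identity at infinity, it is in particular continuous on $\C \setminus \ov{\D}$; the key point is to upgrade pointwise continuity to uniform continuity on the relevant compact set of arguments.

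The main steps, in order, are as follows. Step one: observe that the straight rays $R_{\s_n}(t)$ converge uniformly to $R_\s(t)$ on $[t_*, t^*]$ as $\s_n \to \s$, which is immediate from the explicit formula $|R_{\s_n}(t) - R_\s(t)| = e^t |e^{2\pi i \s_n} - e^{2\pi i \s}| \le e^{t^*} \cdot 2\pi |\s_n - \s|$. Step two: the union of the compact arcs $\bigcup_n R_{\s_n}([t_*, t^*]) \cup R_\s([t_*,t^*])$ is contained in a fixed compact subset $C$ of the annulus $\{ e^{t_*/2} < |z| < e^{2t^*} \} \subset \C \setminus \ov{\D}$, on which $B^{-1}$ is uniformly continuous (it is continuous on a neighborhood of $C$, and $C$ is compact). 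Step three: given $\epsilon > 0$, choose $\delta$ from the uniform continuity of $B^{-1}$ on $C$; then for $n$ large enough that $|R_{\s_n}(t) - R_\s(t)| < \delta$ for all $t \in [t_*, t^*]$ (by step one), we get $|g_{\s_n}(t) - g_\s(t)| = |B^{-1}(R_{\s_n}(t)) - B^{-1}(R_\s(t))| < \epsilon$ uniformly in $t$, which is exactly the claim.

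I do not expect a serious obstacle here; the statement is essentially a packaging of the continuity of $B^{-1}$ together with the trivial uniform convergence of the straight rays. The only point requiring a modicum of care is step two: one must ensure the arcs stay away from $\partial \D$ (they do, since $t \ge t_* > 0$ forces $|R_{\s_n}(t)| \ge e^{t_*} > 1$) and stay bounded (they do, since $t \le t^*$ forces $|R_{\s_n}(t)| \le e^{t^*}$), so that they lie in a compact subset of the domain of the continuous map $B^{-1}$. Once that is in place, uniform continuity on compacts does the rest. (One could alternatively phrase the distortion estimate via the hyperbolic metric $\rho_\Omega$ on $\Omega = \C \setminus K(f)$, bounding $\leucl$-distances by $\ell_\Omega$-distances on a compact piece bounded away from $K(f)$ and from $\infty$, but the bare compactness argument above is cleaner and suffices.)
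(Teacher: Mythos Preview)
Your proposal is correct and follows essentially the same approach as the paper: both observe that the straight rays $R_{\s_n}$ converge uniformly to $R_\s$ on $[t_*,t^*]$ and then apply the uniform continuity of $B^{-1}$ on the compact annulus $\{e^{t_*}\le |z|\le e^{t^*}\}$. Your write-up is simply more detailed than the paper's two-sentence version.
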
 
 \begin{proof}
{The inverse of the B\"ottcher map is uniformly continuous on the closed annulus $\{z\in \C: z=e^te^{2\pi i \s} \text{ with } t\in [t_*,t^*],\ {\s\in \R/\Z}\}$. 
 Since the straight rays $R_\sn(t)$ converge uniformly to the ray $R_\s(t)$ on the compact set $[t_*,t^*]$, the claim follows.}
 \end{proof}
 
 The next lemma gives a sufficient condition to determine when the limit dynamic ray  lands. The proof is the same in  both the polynomial and in the exponential cases, once dynamic rays for the latter have been defined {(see Section~\ref{Dynamic rays in the exponential family} for dynamic rays in the exponential family)}.
 
  \begin{lem}\label{landing}
 Let $x_0\in\C$, $t_0>0,$ and for all $m\in\N$ define $t_m:=F^{-m}(t_0)$. Also let $\gn$ be a sequence of dynamic rays such that $\sn\ra \s$, and such that
 $I_{t_m}(g_\sn)\subset B(x_0,{\frac{A}{\nu^m}})$ for some $A>0,\nu>1$ and for all $n>N_m$.
  Then $g_{\s}$ lands at $x_0$.
 \end{lem}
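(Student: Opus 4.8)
The plan is to show directly that $\ov{g_\s}\setminus g_\s=\{x_0\}$ by exploiting the functional equation \eqref{Functional} together with the hypothesis that the fundamental domains $I_{t_m}(g_\sn)$ shrink geometrically around $x_0$. First I would fix a large potential $t_0>0$ and observe that the tail of the ray $g_\s$ beyond potential $t_0$, namely $g_\s((0,t_0])$, decomposes as the union of the fundamental domains $I_{t_m}(g_\s)=g_\s([t_{m+1},t_m))$ for $m\geq 0$, since $t_m=F^{-m}(t_0)\to 0$ monotonically. So it suffices to control $g_\s$ on each $I_{t_m}(g_\s)$ and show these arcs converge to $\{x_0\}$; the part of the ray with potential $\geq t_0$ is a compact arc bounded away from whatever accumulation happens at potential $0$, and in any case the issue of landing is only about the accumulation set as $t\to 0$.

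The key step is to transfer the estimate $I_{t_m}(g_\sn)\subset B(x_0,A/\nu^m)$ from the approximating rays $g_\sn$ to the limit ray $g_\s$. For this I would use \lemref{Angle-space continuity}: on each compact potential interval $[t_{m+1},t_m]\subset (0,\infty)$ the rays $g_\sn$ converge uniformly to $g_\s$ as $n\to\infty$. Hence for fixed $m$, taking $n>N_m$ large enough that both the inclusion hypothesis holds and $g_\sn$ is uniformly close to $g_\s$ on $[t_{m+1},t_m]$, we get $I_{t_m}(g_\s)\subset \ov{B(x_0,A/\nu^m)}$, and then by a limiting argument (or allowing a harmless factor $2$) $I_{t_m}(g_\s)\subset B(x_0,2A/\nu^m)$ for every $m$. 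This is the main point where the hypotheses are combined, but it is not really an obstacle — it is a routine application of the preceding lemma, once one is careful that the constants $A,\nu$ do not depend on $m$.

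From here the conclusion is immediate: any $w\in\ov{g_\s}\setminus g_\s$ is a limit of points $g_\s(\tau_j)$ with $\tau_j\to 0$ (it cannot be a limit with $\tau_j$ bounded away from $0$, since $g_\s$ restricted to $[\epsilon,\infty)$ is a closed curve for any $\epsilon>0$, being a homeomorphic image of a closed half-line tending to infinity). Each such $g_\s(\tau_j)$ lies in some $I_{t_{m(j)}}(g_\s)$ with $m(j)\to\infty$, hence $|g_\s(\tau_j)-x_0|\leq 2A/\nu^{m(j)}\to 0$, so $w=x_0$. Thus $\ov{g_\s}\setminus g_\s\subseteq\{x_0\}$, and it is nonempty because $g_\s$ accumulates somewhere as $t\to 0$ by compactness of $\ov{B(x_0,2A)}$; therefore $g_\s$ lands at $x_0$. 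The one subtlety worth stating carefully, and which I expect to be the only genuinely delicate point, is the justification that the accumulation set of $g_\s$ as $t\to 0$ is exactly its set of non-escaping boundary points — i.e. that a point of $\ov{g_\s}\setminus g_\s$ must arise from potentials tending to $0$ rather than to a finite positive value; this follows from injectivity and properness of the parametrization $t\mapsto g_\s(t)$ on $(0,\infty)$, which is part of the definition of a dynamic ray in both settings.
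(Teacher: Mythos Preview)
Your proposal is correct and follows essentially the same approach as the paper: both use \lemref{Angle-space continuity} (uniform convergence of $g_{\sn}$ to $g_\s$ on compact potential intervals) to pass the inclusion $I_{t_m}(g_\sn)\subset B(x_0,A/\nu^m)$ to the limit ray, and then read off landing from the geometric shrinking of the fundamental domains. One minor slip worth correcting: by the paper's convention $I_{t_m}(g_\s)=g_\s\big([t_m,F(t_m))\big)=g_\s\big([t_m,t_{m-1})\big)$, not $g_\s\big([t_{m+1},t_m)\big)$; this index shift is harmless for the argument but should be fixed for consistency.
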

 \begin{proof}

To show that $g_{\s}$ lands at $x_0$ it is enough to show that for each $m$, 
 
  \[I_{t_m}(g_{\s})\subset  B\left(x_0,{\frac{A}{\nu^m}}\right).\]
  
\noindent For any fixed  $m>0$, $g_\sn\ra g_\s$ uniformly on $[t_m,t_{m-1}]$ by Lemma~\ref{Angle-space continuity}.
As $I_{t_m}(g_\sn)$ is eventually contained in $B\left(x_0,{\frac{A}{\nu^m}}\right)$, taking the limit for $n\ra\infty$ gives the claim.
 \end{proof}

The  following lemma is a consequence of the fact that for points tending to the boundary of a hyperbolic domain, the hyperbolic density tends to infinity.

\begin{lem}\label{Boundary behavior}
Let $\Omega\subset\C$ be a hyperbolic region.
Let $\gamma_n:[0,1]\ra\Omega$ be a family of curves with uniformly bounded hyperbolic length and such that $\gamma_n(0)\ra \partial \Omega$. Then $\leucl(\gamma_n)\ra 0$.
\end{lem}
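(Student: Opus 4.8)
The plan is to reduce the statement to the well-known fact that the hyperbolic density $\rho_\Omega$ blows up near $\partial\Omega$, combined with a standard compactness-type argument turning pointwise density lower bounds into a length lower bound along the curves. Concretely, let $L$ be a uniform upper bound for $\ell_\Omega(\gamma_n)$, and suppose for contradiction that $\leucl(\gamma_n)\not\to 0$, so after passing to a subsequence there is $\delta>0$ with $\leucl(\gamma_n)\geq\delta$ for all $n$. Since $\gamma_n(0)\to\partial\Omega$, for $n$ large the basepoint $\gamma_n(0)$ lies within Euclidean distance $\delta/4$ of $\partial\Omega$. I would then observe that the Euclidean-length-$\geq\delta$ curve $\gamma_n$ must contain a subarc $\gamma_n'$ that starts at $\gamma_n(0)$, stays within the Euclidean ball $B(\gamma_n(0),\delta/4)$, and has Euclidean length at least $\delta/4$; in particular every point of $\gamma_n'$ is within Euclidean distance $\delta/2$ of $\partial\Omega$.

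Next I would invoke the quantitative boundary behavior of the hyperbolic metric: for a hyperbolic domain $\Omega\subset\C$ one has the Koebe-type lower bound $\rho_\Omega(z)\geq \tfrac{1}{2\,\dist(z,\partial\Omega)}$ (this is the standard estimate coming from the quarter theorem applied to the universal cover, or one can cite any standard reference on the hyperbolic metric — it is exactly the "density tends to infinity at the boundary" fact alluded to before the lemma). Applying this along $\gamma_n'$, every point $z$ of $\gamma_n'$ satisfies $\rho_\Omega(z)\geq \tfrac{1}{2}\cdot\tfrac{1}{\delta/2}=\tfrac{1}{\delta}$, hence
\[
\ell_\Omega(\gamma_n)\;\geq\;\ell_\Omega(\gamma_n')\;=\;\int_{\gamma_n'}\rho_\Omega\,|dz|\;\geq\;\frac{1}{\delta}\,\leucl(\gamma_n')\;\geq\;\frac{1}{\delta}\cdot\frac{\delta}{4}\;=\;\frac14.
\]
Wait — this only gives a fixed lower bound $1/4$, not a contradiction with the bound $L$; so the argument needs to be run with a rescaling parameter rather than with the fixed $\delta$.

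The fix, which I expect to be the only mildly delicate point, is to let the distance-to-boundary go to zero: set $d_n:=\dist(\gamma_n(0),\partial\Omega)\to 0$. Choose a subarc $\gamma_n'$ of $\gamma_n$ starting at $\gamma_n(0)$ that is the shortest initial piece with $\leucl(\gamma_n')=\min(\delta,\; \sqrt{d_n})$ say (well-defined once $n$ is large so that $\sqrt{d_n}\le\delta$ and using $\leucl(\gamma_n)\ge\delta$); every point of this subarc lies within Euclidean distance $\sqrt{d_n}$ of $\gamma_n(0)$, hence within distance $d_n+\sqrt{d_n}\le 2\sqrt{d_n}$ of $\partial\Omega$. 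Then along $\gamma_n'$ we get $\rho_\Omega\geq \tfrac{1}{4\sqrt{d_n}}$, so
\[
\ell_\Omega(\gamma_n)\;\geq\;\frac{1}{4\sqrt{d_n}}\cdot\leucl(\gamma_n')\;=\;\frac{1}{4\sqrt{d_n}}\cdot\sqrt{d_n}\;=\;\frac14,
\]
which again is just a constant; to actually beat $L$ I instead take $\leucl(\gamma_n')=\min(\delta,\, 8L\sqrt{d_n})$ — no, the cleanest route is: every point of the full curve $\gamma_n$ within distance $2\sqrt{d_n}$ of $\partial\Omega$ already forces the relevant subarc's hyperbolic length to exceed $L$ once $\leucl$ there exceeds $4L\sqrt{d_n}$, and since $\leucl(\gamma_n)\ge\delta\gg 4L\sqrt{d_n}$ for large $n$, the initial subarc of Euclidean length $4L\sqrt{d_n}$ (which stays within distance $2\sqrt{d_n}+4L\sqrt{d_n}$ of $\gamma_n(0)$, hence within a comparable distance of $\partial\Omega$, say $\le C\sqrt{d_n}$) has hyperbolic length $\ge \tfrac{1}{2C\sqrt{d_n}}\cdot 4L\sqrt{d_n} = \tfrac{2L}{C}$, and choosing constants correctly this exceeds $L$. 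This contradicts the uniform bound, completing the proof. The main obstacle is purely bookkeeping: choosing the cutoff length for the subarc so that it simultaneously (i) stays close enough to $\partial\Omega$ that the density lower bound applies and (ii) is long enough that the resulting hyperbolic length beats the uniform bound $L$; both can be arranged because $\leucl(\gamma_n)$ is bounded below while $d_n\to 0$.
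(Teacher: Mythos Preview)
Your key step uses the Koebe-type estimate $\rho_\Omega(z)\geq \tfrac{1}{2\,\dist(z,\partial\Omega)}$, but this bound is only valid for \emph{simply connected} hyperbolic domains; it fails for general hyperbolic regions. The standard counterexample is the punctured disk $\D^*=\D\setminus\{0\}$, where near the puncture the hyperbolic density is $\tfrac{1}{|z|\,\bigl|\log|z|\bigr|}$, which is much smaller than $\tfrac{1}{2|z|}$ as $z\to 0$. So the inequality you invoke along $\gamma_n'$ is simply not available in the generality of the lemma, and the argument as written breaks down. (The ``quarter theorem applied to the universal cover'' does not give this bound because the covering map is not univalent.)

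The paper's proof avoids this by a comparison argument: it passes to a subsequence with $\gamma_n(0)\to z_0\in\partial\Omega$, picks two further boundary points, and uses the Schwarz Lemma to dominate $\rho_\Omega$ from below by the hyperbolic metric on the thrice-punctured sphere $\C\setminus\{0,1\}$. There the explicit cusp behavior $\rho(z)\sim \tfrac{1}{|z|\,\bigl|\log|z|\bigr|}$ near $0$ shows first that the cusp has infinite hyperbolic diameter (so the bounded-hyperbolic-length curves $\gamma_n$ are trapped near $0$), and then that $\inf_{\gamma_n}\rho\to\infty$, forcing $\ell_{\mathrm{eucl}}(\gamma_n)\to 0$. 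Your strategy can be repaired along the same lines: replace the false $1/(2d)$ bound by the correct logarithmic estimate (via comparison with $\C\setminus\{0,1\}$ or the Beardon--Pommerenke bound) and integrate $\int_0^\ell \tfrac{c}{(d_n+t)\,|\log(d_n+t)|}\,dt$, which still diverges as $d_n\to 0$ with $\ell\geq\delta$ fixed; alternatively, run the two-step argument (first show the curves stay uniformly close to $\partial\Omega$, then use $\rho\to\infty$ there). Either way, the missing ingredient is exactly the correct boundary asymptotic for the hyperbolic density on a multiply connected domain.
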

For reader's convenience, we will outline a proof. 
\begin{proof}
 Passing to a subsequence, we let  $\gamma_n(0)$ converge to some point $z_0\in \partial \Omega$. 
Let us take two more boundary points, $z_1, z_2\in \partial \Omega$, such that 
$\dist (z_i, z_j) \geq \de>0$, $0\leq i < j\leq 2$, where $\operatorname{dist}$ is the spherical distance 
and $\de=\de(\Omega)$. 
By the Schwarz Lemma, the hyperbolic metric on $\Omega$ is dominated by the hyperbolic metric on
$\hat \C\setminus \{z_0,z_1, z_2\}$, so it is enough to prove the statement for the latter domain.   

Let us move the points $(z_0, z_1, z_2) $ to  $(0,  1, \infty) $ by a M\"obius transformation $A$. 
Since the triple $\{z_0, z_1, z_2\}$ is $\de$-separated,  the spherical derivative 
$\| DA(z) \| $ is bounded from below by some constant $c(\de)>0$. Hence it is enough to prove the statement for $\Om= \C\setminus \{0, 1\}$.
 In this case, the universal covering ${\H} \ra \Omega$ is the classical triangle modular function 
$\lambda$ with fundamental domain 
$$
      \{ z\in \H : \ 0\leq  \Re z \leq  2, |z\pm 1/2|\geq 1/2 \} . 
$$

Near $\infty$, this function admits a Fourier expansion of  form $\phi (e^{\pi i z})$, 
where $\phi$ is a univalent function near $0$. Pushing the hyperbolic metric on $\H$ down,
we conclude that the hyperbolic metric on $\Om$ near $0$ is comparable with 
                  $$ \rho(z) |dz | =   \frac {|dz|} {|z|\; | \log |z||} . $$   
Since $\int_0^\eps  \rho (x) dx = \infty$ for any $\eps>0$ (i.e., the cusp has infinite hyperbolic diameter), 
the curves $\gamma_n$ uniformly converge to $0$. Hence 
$  \inf_{\gamma_n} \rho(z) \to \infty\quad as\ n\to  \infty,$
and 
$$
   l(\gamma_n) = \int_{\gamma_n} \frac {\|dz \| }{\rho(z) } \to 0 \quad as\ n\to  \infty,
$$ 
where $\|\cdot \|$ stands for the infinitesimal length of the hyperbolic metric. 
\end{proof}

\subsection{Proof of Theorem  A in the polynomial case}

In this section we give a proof of Theorem A in the polynomial case. 
Up to taking an iterate of $f$, we can assume that the repelling periodic point in question is a repelling fixed point $\alpha$.  Let $\mu>1$ be the modulus of its multiplier, and let $L$ be a linearizing neighborhood  for $\alpha$.
For the branch $\psi$ of $f^{-1}$ fixing $\alpha$, it is easy to show using linearizing coordinates that there exists a $C>0$ such that for all $x\in L$
\begin{equation}\label{Contraction}\frac{1}{C\mu^n}<|(\psi^n)'(x)|<\frac{C}{\mu^n}. \end{equation}

Before proceeding to the proof of Theorem A let us observe the following:

 \begin{prop}\label{Fundamental domains shrinking}
 The Euclidean length $\leucl(I_t(g_{\s}))$ 
 tends to $0$ uniformly in $\s$ as $t\ra 0$.
 \end{prop}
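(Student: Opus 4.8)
The plan is to exploit the fact that $\Omega = \C \setminus K(f)$ carries a hyperbolic metric and that $f$ (hence each branch of $f^{-1}$) is a local isometry for it, so the hyperbolic length of a fundamental domain $I_t(g_\s)$ is \emph{independent} of the choice of fundamental domain along a fixed ray. More precisely, since $f(g_\s(t)) = g_{\sigma\s}(F(t))$ and $f$ preserves $\rho_\Omega$, the hyperbolic length $\ell_\Omega(I_t(g_\s))$ equals $\ell_\Omega(I_{F(t)}(g_{\sigma\s}))$; iterating, $\ell_\Omega(g_\s([t,F(t)))) = \ell_\Omega(g_{\sigma^n\s}([F^n(t), F^{n+1}(t))))$. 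So the hyperbolic length of a fundamental domain at small potential $t = F^{-n}(t_0)$ equals that of a fundamental domain at the fixed potential $t_0$ for some other angle. This already gives a uniform bound: I would first show $\ell_\Omega(I_{t_0}(g_\s))$ is bounded uniformly in $\s$ for $t_0$ fixed (this follows from compactness — the closed annulus of potentials in $[t_0, F(t_0)]$ maps under $B^{-1}$ to a compact subset of $\Omega$ on which $\rho_\Omega$ is bounded, and the Euclidean lengths of the $I_{t_0}(g_\s)$ are uniformly bounded by Lemma~\ref{Angle-space continuity} together with compactness of $\R/\Z$). Hence there is a constant $M$ with $\ell_\Omega(I_t(g_\s)) \le M$ for all $\s$ and all $t \le F(t_0)$, i.e. for all sufficiently small $t$.

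Next I would convert this uniform hyperbolic bound into the desired Euclidean statement using Lemma~\ref{Boundary behavior}. The point is that as $t \to 0$, the fundamental domain $I_t(g_\s) = g_\s([t,F(t)))$ has a starting point $g_\s(t) = B^{-1}(R_\s(t))$ that tends to $\partial\Omega = J(f)$: indeed, as $t \to 0^+$, $R_\s(t) \to \partial\D$, and $B^{-1}$ extends continuously (or at least sends points approaching $\partial\D$ to points approaching $K(f)$), so $g_\s(t)$ accumulates only on $J(f)$. Now suppose, for contradiction, that the conclusion fails: then there is $\epsilon_0 > 0$, a sequence $t_n \to 0$, and angles $\s_n$ with $\leucl(I_{t_n}(g_{\s_n})) \ge \epsilon_0$. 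Reparametrize each $I_{t_n}(g_{\s_n})$ as a curve $\gamma_n : [0,1] \to \Omega$; these have $\gamma_n(0) = g_{\s_n}(t_n) \to \partial\Omega$ (after passing to a subsequence so the starting points converge) and uniformly bounded hyperbolic length $\le M$. Lemma~\ref{Boundary behavior} then forces $\leucl(\gamma_n) \to 0$, contradicting $\leucl(\gamma_n) \ge \epsilon_0$. This proves the proposition.

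The main obstacle I anticipate is the claim that $g_\s(t) \to \partial\Omega$ \emph{uniformly in} $\s$ as $t \to 0$ — this is what lets me pass to a convergent subsequence of starting points with limit genuinely in $\partial\Omega = J(f)$ rather than somewhere spurious. This should follow from the fact that $B^{-1}$ maps $\{1 < |z| < 1 + \delta\}$ into a shrinking neighborhood of $K(f)$ as $\delta \to 0$ (equivalently, $B$ extends to a homeomorphism or at least a proper map near the boundary when $J(f)$ is connected, and even without local connectivity the image of a thin annulus around $\partial\D$ is contained in a thin neighborhood of $K(f)$ by continuity of $B^{-1}$ on $\C\setminus\D$ together with $B^{-1}(\partial\D^+) \subset K(f)$ in the appropriate sense). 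A clean way to phrase it: the function $t \mapsto \sup_{\s} \operatorname{dist}(g_\s(t), K(f))$ tends to $0$ as $t \to 0$, because otherwise one finds $t_n \to 0$ and $\s_n$ with $g_{\s_n}(t_n)$ bounded away from $K(f)$, yet $R_{\s_n}(t_n) \to \partial\D$, and any limit point of $g_{\s_n}(t_n)$ would be a point of $\Omega$ that is a limit of $B^{-1}$ of points tending to $\partial\D$ — impossible since $B^{-1}$ is proper as a map $\C\setminus\ov\D \to \Omega$. Once this uniform boundary-convergence is in hand, the rest is the routine compactness-plus-Lemma~\ref{Boundary behavior} argument sketched above.
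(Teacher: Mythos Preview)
Your proof is correct and follows essentially the same route as the paper's: bound the hyperbolic length of fundamental domains uniformly via compactness of a B\"ottcher annulus and the fact that $f:\Omega\to\Omega$ is a covering (the paper phrases this as the Schwarz Lemma), then use properness of $B^{-1}$ to get $g_\s(t)\to J(f)$ uniformly and conclude with Lemma~\ref{Boundary behavior}. One small technicality: to cover \emph{all} $t\le t_0$ (not just $t=F^{-n}(t_0)$) you need the uniform hyperbolic bound on arcs $g_\s([t_0,F^2(t_0)])$ rather than just $I_{t_0}(g_\s)$, since for arbitrary $t$ the iterate $F^n(t)$ lands in $[t_0,F(t_0))$ and the corresponding fundamental domain sits inside $g_{\sigma^n\s}([t_0,F^2(t_0)))$ --- this is why the paper takes $t^*>F^2(t_*)$, and your compactness argument covers it without change.
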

 \begin{proof}
Let $t_*>0$, $t^{*}>F^{2}(t_*)$,  $\Omega=\C\setminus K(f)$. By  uniform continuity of the inverse of the \Boettcher map on compact sets, the Euclidean  length  $\leucl(g_{\s}(t_*,t^*))$, and hence the hyperbolic length $\ell_\Omega(g_{\s}(t_*,t^*))$, are uniformly  bounded in $\s$. 
So by the Schwarz Lemma, $\ell_\Omega(f^{-n}(g_{\s}(t_*,t^*)))$ is also uniformly bounded in $\s$ and $n$, and hence $\ell_\Omega(I_t(g_{\s}))$ is uniformly bounded in $\s$ for $t\leq t_*$ (because for any such $t$, $I_t(g_{\s})\subset f^{-n}(g_{\stilde}(t_*,t^*))$ for some $\stilde\in\R/\Z$ and some $n\geq0$). Since the inverse of the \Boettcher map   is proper,  $\dist(g_{\s}(t),J(f))\ra0$ uniformly in $\s$ as $t\ra0$, hence by Lemma~\ref{Boundary behavior}  $\leucl (I_t(g_{\s}))\ra0$ uniformly in $s$ as $t\ra0$.
\end{proof}

  \begin{thm}\label{Landing of a ray}
 Let $f$ be a polynomial with  connected Julia set, and let $\alpha$ be a repelling fixed point for $f$. Then there is at least one dynamic ray $g_{\s}$ landing at $\alpha.$  
 \end{thm}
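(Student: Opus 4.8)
The plan is to construct the landing ray as a limit of dynamic rays obtained by pulling back a fixed ray toward $\alpha$ under the linearizing inverse branch $\psi$, and then to verify the landing condition via Lemma~\ref{landing}. Start with any dynamic ray $g_{\s^{(0)}}$ whose closure meets the linearizing neighborhood $L$ — for instance, fix a potential $t_0>0$ small enough that the fundamental domain $I_{t_0}(g_{\s^{(0)}})$ lies in $L$; such a ray exists because rays fill up a neighborhood of $J(f)$ and, by \propref{Fundamental domains shrinking}, fundamental domains at small potential are tiny and arbitrarily close to $J(f)$. By the functional equation \eqref{Functional}, applying $\psi$ to a piece of $g_{\s^{(0)}}$ produces a piece of another dynamic ray; iterating, let $\gamma_n:=\psi^n(g_{\s^{(0)}}([t_0,\infty)))$, which is a sub-arc of a dynamic ray $g_{\s^{(n)}}$, and set $t_n:=F^{-n}(t_0)$ so that $\gamma_n = g_{\s^{(n)}}([t_n,\infty))$ (modulo reparametrization). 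Crucially the $\gamma_n$ are nested tails shrinking toward $\alpha$.

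Next I would extract a convergent subsequence of angles. Because $\Sigma_D$ (equivalently $\R/\Z$) is compact, after passing to a subsequence we may assume $\s^{(n_k)}\to\s$ for some angle $\s$. The point of the pullback construction is to control the fundamental domains $I_{t_m}(g_{\s^{(n_k)}})$ for each fixed $m$: since $g_{\s^{(n_k)}}$ is itself an $n_k$-fold $\psi$-pullback of $g_{\s^{(0)}}$, its fundamental domain at potential $t_m$ equals $\psi^{n_k-m}$ applied to a fundamental domain of $g_{\s^{(n_k-m)}}$ sitting at potential $t_0$ inside $L$ (here I use that all the intermediate rays have fundamental domains at potential $t_0$ of uniformly bounded Euclidean diameter, by uniform continuity of $B^{-1}$ on $\{t_0\le t\le F(t_0)\}$). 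Applying the contraction estimate \eqref{Contraction} for $\psi^{n_k-m}$ on $L$, one gets $I_{t_m}(g_{\s^{(n_k)}})\subset B(\alpha, C'\mu^{-(n_k-m)})$ for a uniform $C'$; in particular, for every fixed $m$ and every $n_k$ large enough, $I_{t_m}(g_{\s^{(n_k)}})\subset B(\alpha, A\mu^{-m})$ with $A$ absorbing the constants. This is exactly the hypothesis of \lemref{landing} with $x_0=\alpha$ and $\nu=\mu$, so $g_\s$ lands at $\alpha$.

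The main obstacle is the bookkeeping in the previous paragraph: one must be careful that the "fundamental domain at $t_m$ of the $n_k$-th pullback ray" really is the $(n_k-m)$-th $\psi$-iterate of a fundamental domain lying inside $L$, and that the Euclidean size of that initial domain is bounded independently of which ray in the pullback tree it came from. The first point is the functional equation \eqref{Functional} applied repeatedly, together with the fact that $\psi$ is the branch of $f^{-1}$ fixing $\alpha$ and that all of $L$ stays in $L$ under $\psi$ (a property of linearizing neighborhoods). The uniform bound on the initial Euclidean size is \propref{Fundamental domains shrinking}, or more directly the uniform continuity of $B^{-1}$ on the relevant compact annulus, which makes the diameter of $I_{t_0}(g_{\stilde})$ bounded uniformly over $\stilde\in\R/\Z$. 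Once these are in place, \eqref{Contraction} and \lemref{landing} finish the argument, and I would remark that nothing here used properties of $B^{-1}$ beyond uniform continuity on compacta plus properness — which is why the same scheme will run in the exponential setting.
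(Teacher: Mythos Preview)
Your overall strategy---pull back a ray piece by the local inverse $\psi$, extract a limit angle by compactness, and verify landing via \lemref{landing}---is exactly the paper's. But the bookkeeping you flag as ``the main obstacle'' is not handled correctly, and the gap is substantive.

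First, the indices are off: by the functional equation, $f^m$ carries $I_{t_m}(g_{\s^{(n)}})$ to $I_{t_0}(g_{\s^{(n-m)}})$, so $I_{t_m}(g_{\s^{(n)}})$ is a branch of $f^{-m}$ (not $f^{-(n-m)}$) applied to $I_{t_0}(g_{\s^{(n-m)}})$; the contraction factor you can hope for is $\mu^{-m}$, not $\mu^{-(n-m)}$. (Your definition $\gamma_n:=\psi^n(g_{\s^{(0)}}([t_0,\infty)))$ also does not make literal sense, since $\psi$ is only defined on $L$ and the tail goes to infinity.) Second, and more importantly, to use the contraction estimate \eqref{Contraction} you must know that $I_{t_0}(g_{\s^{(n-m)}})\subset L$ for \emph{every} $n-m\geq 0$, not just for the initial ray $\s^{(0)}$. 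All you know from the construction is that the \emph{deepest} domain $I_{t_n}(g_{\s^{(n)}})=\psi^n(I_{t_0}(g_{\s^{(0)}}))$ lies in $L$; nothing so far forces the domain at potential $t_0$ on the $n$-th pullback ray to stay near $\alpha$. Your appeal to ``$\psi(L)\subset L$'' and uniform bounds on $\leucl(I_{t_0}(\cdot))$ does not close this: bounded diameter is not the same as being contained in $L$.

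This is precisely what the paper's $U\subset U'$ set-up with $\epsilon=\dist(\partial U,\partial U')$ is for. One chooses $t_0$ so small that \emph{every} fundamental domain at potential $t_0$ has length $<\epsilon$ (\propref{Fundamental domains shrinking}); then, inductively, $\gamma_{n-1}\subset U'$ gives $\psi(\gamma_{n-1})\subset U$, and since $g_{\s^{(n)}}(t_0)\in U$ and $\leucl(I_{t_0}(g_{\s^{(n)}}))<\epsilon$, the new fundamental domain $I_{t_0}(g_{\s^{(n)}})$ stays in $U'$. Only with this inductive step in place can one write $I_{t_m}(g_{\s^{(n)}})=\psi^m(I_{t_0}(g_{\s^{(n-m)}}))$ with the argument genuinely inside $U'\subset L$, and then \eqref{Contraction} yields the bound $C\,\diam U'/\mu^m$ needed for \lemref{landing}. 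Insert this $\epsilon$-collar argument (and correct the exponent to $m$) and your proof becomes the paper's.
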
    

 \begin{proof}
 Let $L$ be a linearizing neighborhood for $\alpha$ and $\mu$ be as in (\ref{Contraction}). 
 Let $U'\subset L$ be a neighborhood of $\alpha$, and let $U$ be its preimage under the inverse branch $\psi$ of $f$ which fixes $\alpha$.  Let 
$\epsilon$ be the distance between $\partial U$ and $\partial U'$.  By Proposition~\ref{Fundamental domains shrinking}, there exists $t_\epsilon$ such that

 \begin{equation}\label{shrinking}\leucl(I_{t}(g_{\s}))<\epsilon \text{ for all }\s\in\Sigma_D, t<t_\epsilon. \end{equation}

 As $\alpha$ is in the Julia set, it is approximated by escaping points with arbitrary small potential $t$, hence there exists a dynamic ray $\gz$ such that $\gz(t_0)$ belongs to $U$ for some $t_0<t_\epsilon$. By (\ref{shrinking}), $\leucl(I_{t_0}(\gz))\leq \epsilon$, hence $I_{t_0}(\gz)\subset U'$ (See Figure~\ref{A}).

\begin{figure}[hbt!]
\begin{center}
\def\svgwidth{5cm}

\begingroup
  \makeatletter
  \providecommand\color[2][]{%
    \renewcommand\color[2][]{}%
  }
  \providecommand\transparent[1]{
   
    \renewcommand\transparent[1]{}%
  }
  \providecommand\rotatebox[2]{#2}
  \ifx\svgwidth\undefined
    \setlength{\unitlength}{466.06061386pt}
  \else
    \setlength{\unitlength}{\svgwidth}
  \fi
  \global\let\svgwidth\undefined
  \makeatother
  \begin{picture}(1,0.86990702)%
    \put(0,0){\includegraphics[width=\unitlength]{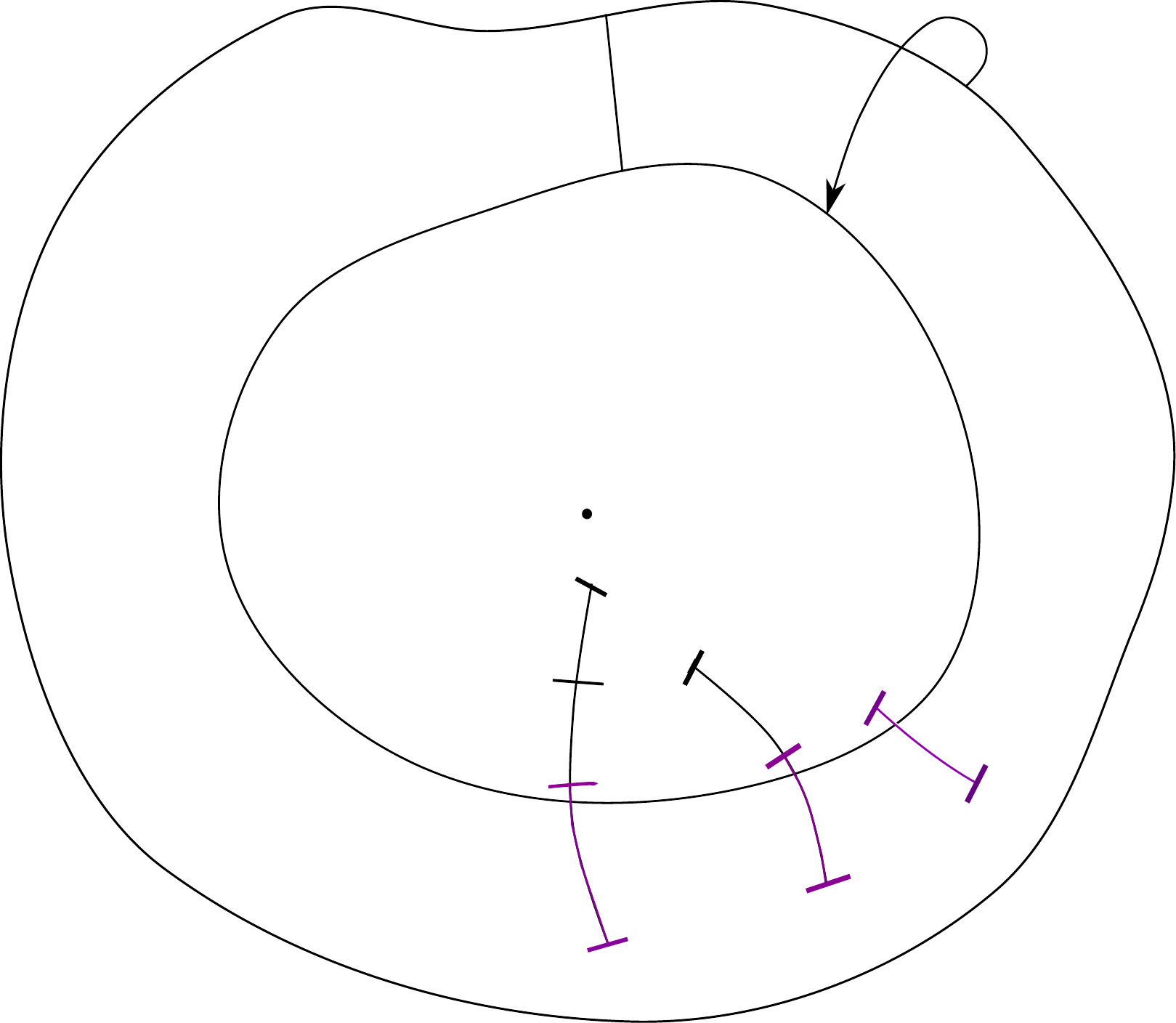}}%
    \put(0.53367775,0.78105621){\color[rgb]{0,0,0}\makebox(0,0)[lb]{\smash{$\epsilon$}}}%
    \put(0.5169038,0.44557696){\color[rgb]{0,0,0}\makebox(0,0)[lb]{\smash{$\alpha$}}}%
    \put(0.60077364,0.27364386){\color[rgb]{0,0,0}\makebox(0,0)[lb]{\smash{$\gamma_1$}}}%
    \put(0.51271026,0.34353537){\color[rgb]{0,0,0}\makebox(0,0)[lb]{\smash{$\gamma_2$}}}%
    \put(0.77410445,0.22751545){\color[rgb]{0,0,0}\makebox(0,0)[lb]{\smash{$\gamma_0$}}}%
    \put(0.80765245,0.54342507){\color[rgb]{0,0,0}\makebox(0,0)[lb]{\smash{$U$}}}%
    \put(0.92227456,0.68041242){\color[rgb]{0,0,0}\makebox(0,0)[lb]{\smash{$U'$}}}%
    \put(0.74614791,0.75030394){\color[rgb]{0,0,0}\makebox(0,0)[lb]{\smash{$\psi$}}}%
  \end{picture}%
\endgroup

\end{center}
\caption{\small Construction of the curves $\gamma_n$ in the proof of Theorem~\ref{Landing of a ray}.}\label{A}
\end{figure}

 \noindent For any $n>0$ recall that  $\psi^n$ is the branch of $f^{-n}$ fixing $\alpha$, and let $\gn$ be the ray containing $\psi^n (I_{t_0}(g_\szero))$. Observe that  $\sigma^n \s_n=\szero$ for all $n$.
 Let us define inductively a sequence of curves $\gamma_n\subset \gn$ as 
 \begin{align*}
 \gamma_0 &:=I_{t_0}(\gz)  \\
 \gamma_n &:= \psi (\gamma_{n-1})\cup I_{t_0}(\gn).
 \end{align*}
We show inductively that the curves $\gamma_n$ are well defined and that  the following properties hold: 
 \begin{enumerate}
 \item $\gamma_n= \gn(t_n,F(t_0))$, where $t_n:=F^{-n}(t_0)$;
 \item $\gamma_n\subset U'$ for all $n$;
 \item  $I_{t_m}(\gn)\subset B\left(\alpha,\frac{ C\diam U' }{\mu^m}\right)$, for all $m\leq n.$ \label{estimates}
 \end{enumerate}
 All properties are true for $\gamma_0$, so let us suppose that they hold for $\gamma_{n-1}$ and show that they also hold for $\gamma_n$.
We have that
 \[\psi (\gamma_{n-1})=\psi(g_{\s_{n-1}}(t_{n-1}, F(t_0)))=\gn(t_n, t_0)\] 
 by the functional equation (\ref{Functional}) and  by the definition of $\gn$. Also, $\psi(\gamma_{n-1})\subset U$ because by the inductive assumption $\gamma_{n-1}\subset U'$ and $\psi(U')=U$. As  $\leucl(I_{t_0}(\gn))\leq\epsilon$, and $\gn(t_{n},t_0)\subset U$, we have that $\gamma_n\subset U'$.

 If $x\in I_{t_m}(\gn)$ for $\ m\leq n$, then $x=\psi^m y$ for some $y\in I_{t_0}(g_{\s_{n-m}})\subset U'$, hence by (\ref{Contraction}) we have
\[ |x-\alpha|\leq\frac{C|y-\alpha|}{\mu^m}\leq  \frac{C\diam U' }{\mu^m},     \]
 proving Property~\ref{estimates}.
 
As the sequence  $\{\sn\}$  of angles of the rays $\gn$ is contained in $\R/\Z$, there is a subsequence  converging to some angle $\s$. As the Julia set is connected, no singular value is escaping, hence the ray $g_{\s}$ of angle $\s$ is well defined for all potentials $t>0$.
Landing of $g_{\s}$ at $\alpha$ follows from Property~\ref{estimates} together with Lemma~\ref{landing}.
 \end{proof}

 To prove periodicity of the landing ray  constructed in Theorem~\ref{Landing of a ray} we   use  the following   lemma.

%

\begin{lem}\label{Rotation sets}
 Let $(X,d)$ be a compact metric space, $f: X\ra X$ be a  locally expanding map and let $A\subset X$ be a closed invariant subset. 
If $f: A\rightarrow A$ is invertible then $A$ is finite.  
\end{lem}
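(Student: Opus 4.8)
The plan is to exploit that invertibility of $f$ on $A$, together with the local expansion, forces the inverse branch $g:=(f|_A)^{-1}$ to be a \emph{local contraction} of $A$, and then to show that a compact set carrying such a map on each of its ``$\delta$-pieces'' must be finite. First I would note that $g$ is a well-defined homeomorphism of $A$, since $f|_A\colon A\to A$ is a continuous bijection of the compact set $A$. Because $g$ is uniformly continuous there is $\delta\in(0,\epsilon]$ with $d(u,v)<\delta\Rightarrow d(gu,gv)<\epsilon$; applying the expansion hypothesis to the pair $(gx,gy)$ then gives
\[
  d(gx,gy)\le\rho^{-1}\,d(x,y)<\delta\qquad\text{whenever }x,y\in A,\ d(x,y)<\delta .
\]
Thus $g$ sends pairs at distance $<\delta$ to pairs at distance $<\delta$, contracting such distances by the fixed factor $\rho^{-1}<1$.

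Next I would decompose $A$ into its \emph{$\delta$-chain components}: declare $x\sim y$ when $x$ and $y$ are joined by a finite chain in $A$ with consecutive distances $<\delta$. The classes $A_1,\dots,A_r$ are open, hence clopen, hence compact, and there are finitely many by compactness of $A$. Since $g$ preserves the ``distance $<\delta$'' relation, it sends $\delta$-chains to $\delta$-chains, so it carries each $A_i$ into some $A_{j(i)}$; as $g$ is a bijection of $A=\bigsqcup_i A_i$, the induced map $j$ is a permutation of $\{1,\dots,r\}$ and $g(A_i)=A_{j(i)}$. On each $A_i$ I would then introduce the intrinsic \emph{chain pseudometric} $L(x,y)=\inf\sum_k d(z_k,z_{k+1})$, the infimum over $\delta$-chains in $A_i$ from $x$ to $y$: it satisfies $L\ge d$, it is finite by chain-connectedness, and it is in fact \emph{bounded} on $A_i$, because the sets $\{y:L(x_0,y)<c\}$ are open and exhaust $A_i$, so one of them equals $A_i$. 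Moreover $g$ contracts $L$ by $\rho^{-1}$, since it contracts the length of every $\delta$-chain. Hence $\diam_L(A_{j(i)})=\diam_L\bigl(g(A_i)\bigr)\le\rho^{-1}\diam_L(A_i)$; summing over $i$ and using that $j$ is a permutation yields $S\le\rho^{-1}S$ for the finite nonnegative sum $S=\sum_i\diam_L(A_i)$, so $S=0$. Therefore $\diam_d(A_i)\le\diam_L(A_i)=0$, each $A_i$ is a single point, and $|A|=r<\infty$.

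The delicate point — and the reason one cannot simply contract in the ambient metric $d$ — is that $g$ is only a \emph{local} contraction, so two points of the same $\delta$-chain component may be far apart in $d$; the chain pseudometric $L$ is exactly the device that upgrades the local contraction into a genuine contraction on each (clopen, chain-connected) piece, and the step needing the most care is verifying that $L$ is finite and bounded on each such piece via the clopen/compactness argument. I note also that the argument uses forward invariance and the expansion hypothesis only insofar as they produce the locally contracting inverse $g$; no separate appeal to ``positive expansiveness'' of $f$ is needed.
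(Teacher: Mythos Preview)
Your proof is correct, but it is considerably more elaborate than the paper's, and your closing remark that ``one cannot simply contract in the ambient metric $d$'' is mistaken; the paper's argument does precisely that. Both proofs start identically: $g=(f|_A)^{-1}$ is uniformly continuous, so there is $\delta\in(0,\epsilon)$ with $d(x,y)<\delta\Rightarrow d(gx,gy)\le\rho^{-1}d(x,y)<\delta$. At this point the paper simply observes that this inequality \emph{iterates}: since the right-hand side is again $<\delta$, one obtains $d(g^nx,g^ny)\le\rho^{-n}d(x,y)$ for all $n$. Covering $A$ by $N$ balls of radius $\delta$, the images under $g^n$ are $N$ sets of diameter $\le\rho^{-n}\delta$ that still cover $A$ (because $g$ is a bijection of $A$). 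Letting $n\to\infty$ shows $|A|\le N$.

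Your route---$\delta$-chain components, the permutation they induce, the chain pseudometric $L$, and the summation trick $S\le\rho^{-1}S$---is a valid alternative, and each step checks out (in particular, surjectivity of $j$ follows from surjectivity of $g$, and then bijectivity of $j$ is automatic on a finite index set, which forces $g(A_i)=A_{j(i)}$). What it buys you is a structural picture: the pieces $A_i$ are exactly the points of $A$, and $g$ (hence $f$) permutes them. What the paper's argument buys is economy: no auxiliary metric, no decomposition, just the single observation that the $\delta$-scale is self-improving under $g$, so a finite $\delta$-cover of $A$ can be pushed forward to a cover of arbitrarily small mesh. Your instinct that far-apart points in the same chain component cause trouble is unfounded here, because the covering argument never needs to compare such points---it only tracks what happens inside each original $\delta$-ball.
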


\begin{proof}
    Since  $A$ is compact and $f: A \ra A$ is invertible, it is a homeomorphism,
so $f^{-1} : A \ra A$ is uniformly continuous. 
Let us take $\eps>0$ from  the definition of the local expanding property.   
Then there exists $\de\in (0, \eps)$  such that if $d(x, y) < \de$ for some points 
$x, y\in A$ then $d (f^{-1} x, f^{-1} y) < \eps$. 
Moreover, by the local expanding property
$$
  d(f^{-1} x , f^{-1} y ) \leq \rho^{-1} d(x, y) < \rho^{-1} \de < \de.
$$
Iterating this estimate, we obtain
$$
    d(f^{-n} x , f^{-n} y ) \leq \rho^{-n } d(x, y) < \rho^{-n } \de \to 0 \quad as \ n\to \infty. 
$$
Let us now cover $A$ by $N$ $\de$-balls $B_i\subset A$.  Then the last estimate shows that $A$ is covered by $N$ $(\rho^{-n} \de)$-balls $f^{-n}(B_i)$, which  implies     that $A$ contains at most $N$ points.  
\end{proof}

 \begin{prop}\label{Periodicity of landing ray}
Any  dynamic ray $g_{\s}$ obtained from  the construction of Theorem~\ref{Landing of a ray} is periodic.
 \end{prop}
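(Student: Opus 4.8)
The plan is to produce a closed $\sigma$-invariant set $A\subset\Sigma_D$ that contains $\s$, that consists of angles of rays landing at $\alpha$, and on which $\sigma$ restricts to a bijection. Then Lemma~\ref{Rotation sets} forces $A$ to be finite, a permutation of a finite set has every point periodic, so $\sigma^k\s=\s$ for some $k\ge1$; by the functional equation~(\ref{Functional}) this means $f^k(g_\s)=g_{\sigma^k\s}=g_\s$, i.e.\ $g_\s$ is a periodic ray.

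To build $A$ I would not retain the particular convergent subsequence chosen in the proof of Theorem~\ref{Landing of a ray}, but instead let $A$ be the set of \emph{all} accumulation points of the sequence of angles $(\sn)_{n\ge0}$ constructed there; this is automatically closed, and $\s\in A$. Two facts from that construction are used. First, $\sigma\sn=\s_{n-1}$ for every $n\ge1$, since $\gn$ was defined by pulling $\gamma_{n-1}$ back under the branch $\psi$ of $f^{-1}$. Consequently $\sigma$ sends accumulation points of $(\sn)$ to accumulation points of the same sequence, so $\sigma(A)\subseteq A$; and since any $\theta\in A$ is a limit of a subsequence $\s_{n_k}$, passing to a convergent subsequence of $(\s_{n_k+1})$ produces a point of $A$ mapped by $\sigma$ to $\theta$, so $\sigma(A)=A$. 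Second, Property~\ref{estimates}, $I_{t_m}(\gn)\subset B(\alpha,C\diam U'/\mu^m)$ for all $m\le n$, holds for every $n$; hence for $\theta=\lim_k\s_{n_k}\in A$ the hypotheses of Lemma~\ref{landing} are met (with $x_0=\alpha$, $\nu=\mu$, $A=C\diam U'$), so $g_\theta$ lands at $\alpha$. Thus every angle in $A$ is the angle of a ray landing at $\alpha$.

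It remains to see that $\sigma\colon A\to A$ is injective. Suppose $\theta\ne\theta'$ in $A$ with $\sigma\theta=\sigma\theta'=:\eta$; then $g_\theta$ and $g_{\theta'}$ both land at $\alpha$ and, being distinct dynamic rays, are disjoint. Choose a neighbourhood $V$ of $\alpha$ on which $f$ is injective --- possible since $\alpha$ is repelling, so $f'(\alpha)\ne0$ --- and $\tau>0$ small enough that the initial pieces $g_\theta((0,\tau])$ and $g_{\theta'}((0,\tau])$ both lie in $V$. By~(\ref{Functional}), $f$ carries each of these onto $g_\eta((0,D\tau])$, so they have equal image under the injective map $f|_V$; hence $g_\theta((0,\tau])=g_{\theta'}((0,\tau])$, contradicting disjointness. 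Therefore $\sigma|_A$ is a bijection, Lemma~\ref{Rotation sets} applies, and the argument is complete.

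I expect this injectivity step to be the crux: ruling out that two distinct rays landing at $\alpha$ are identified by $f$ genuinely needs the local dynamics at the repelling fixed point together with the disjointness of rays. The rest --- closedness, invariance, and the landing of the limit rays --- is soft and reuses estimates already in hand.
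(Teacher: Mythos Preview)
Your proof is correct and follows essentially the same approach as the paper: you take the limit set $A$ of the sequence $(\sn)$, show it is closed, $\sigma$-invariant, consists of angles of rays landing at $\alpha$, and that $\sigma|_A$ is a bijection (surjectivity via the relation $\sigma\s_{n+1}=\sn$, injectivity via local injectivity of $f$ near the repelling fixed point), then invoke Lemma~\ref{Rotation sets}. The paper's argument is the same in all essentials.
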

 \begin{proof}
Let $\BB:=\{\sn\}_{n\in\N}$ be the set of  angles of the rays $\psi^n(g_{\szero})$ constructed in the proof of Theorem~\ref{Landing of a ray}, and $\AA$ be their limit set defined as 
\[\AA:=\{\s\in\Sigma_D: \s_{n_k}\ra\s \text{ for } \s_{n_k}\in \BB, n_k\ra\infty\}.\] 
The set $ \AA$ is closed and forward invariant by definition, and for any $\s\in\AA$ the ray $g_{\s}$ lands at $\alpha$ by the construction of Theorem~\ref{Landing of a ray}.  In order to use Lemma~\ref{Rotation sets}, we begin by showing that $\sigma:\AA\ra\AA$ is a homeomorphism. 
We first show surjectivity of $\sigma$. For any $ \s_{n+1}\in\BB, \sigma \s_{n+1}=\sn$ by definition of $\sn$. If $\s\in\AA$,   there is a sequence $\s_{n_k}\in\BB$ converging to  $\s$; then the sequence $\s_{n_k+1}\in\BB$ has at least one accumulation point $\tilde{\s}\in\AA$ and since $\sigma\s_{n_k+1}=\s_{n_k}$ for all $n_k$, {by continuity} of $\sigma$ we have that $\sigma\tilde{\s}=\s$. {To show  injectivity of $\sigma|_\AA$ suppose by contradiction that there exist $\s, \s'\in \AA$ such that $\sigma \s=\sigma \s'$. Since $\alpha$ is a repelling fixed point it has a simply connected neighborhood $U$ such that $f:U\ra U'\supset U$ is a homeomorphism, and since both $g_\s$ and $g_{\s'}$ land at $\alpha$, there is a  sufficiently small potential $t$ such that $g_\s(t)$ and $g_{\s'}(t)$ are both in $U$. Then  $f(g_\s(t))=f(g_{\s'}(t))=g_{\sigma\s}(F(t))$ contradicting injectivity of $f$ on $U$.} 
The claim then follows from  Lemma~\ref{Rotation sets}.
\end{proof}

The next lemma can be found in \cite{Mi}, Lemma 18.12. The proof holds also in the exponential case; see at the end of Section~\ref{Dynamic rays in the exponential family} for more details.

\begin{lem}\label{One for all}
If a periodic ray lands at a repelling periodic point $z_0$, then only finitely many rays land at $z_0$, and these rays are all periodic of the same period. 
\end{lem}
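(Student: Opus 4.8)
The plan is to run the classical argument of \cite[Lemma~18.12]{Mi}, checking that each of its ingredients is available for dynamic rays in the exponential family (the exponential definitions and the functional equation are recalled in Section~\ref{Dynamic rays in the exponential family}, and the exponential analogue of \propref{Fundamental domains shrinking} is provided by the estimates of Section~\ref{Bounds on fundamental domains for exponentials}). Up to replacing $f$ by an iterate, we may assume that $z_0$ is a repelling \emph{fixed} point and that the hypothesized periodic ray, call it $g_{\szero}$, is a \emph{fixed} ray, i.e.\ $\sigma\szero=\szero$. Being repelling, $z_0$ is not a critical point, so $f$ restricts to an orientation-preserving homeomorphism on a linearizing neighborhood $L$ of $z_0$; let $\psi$ be the branch of $f^{-1}$ fixing $z_0$, so that $|\psi'|$ is comparable to $\mu^{-1}<1$ near $z_0$, as in (\ref{Contraction}).

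Let $\BB$ be the set of angles of rays landing at $z_0$; it contains $\szero$. The core of the argument is that $\sigma$ induces a cyclic-order-preserving bijection of $\BB$: $\sigma(\BB)\subset\BB$ since the image under $f$ of a ray landing at $z_0$ lands at $f(z_0)=z_0$; $\sigma|_{\BB}$ is injective by the argument already used in the proof of \propref{Periodicity of landing ray} (if $g_\tau,g_{\tau'}$ land at $z_0$ and $\sigma\tau=\sigma\tau'$, then for small potential $t$ the points $g_\tau(t),g_{\tau'}(t)$ lie in a neighborhood $U\subset L$ on which $f$ is injective and have the same image, forcing $\tau=\tau'$); $\sigma|_{\BB}$ is surjective because $\psi$ carries the germ at $z_0$ of a ray landing at $z_0$ to the germ of a ray landing at $z_0$, whose angle is a $\sigma$-preimage of the original (here one uses that for a map with bounded postsingular set the relevant preimage ray is well defined); and $\sigma|_{\BB}$ preserves cyclic order because distinct rays are disjoint, so the cyclic order of their angles coincides with the order in which the corresponding arcs reach $z_0$, and $f$, being an orientation-preserving local homeomorphism at $z_0$, preserves the latter.

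Granting that $\BB$ is finite, the conclusion is pure bookkeeping. A cyclic-order-preserving bijection of a finite cyclic set is a rotation; since ours fixes $\szero$ it is the identity, so $\sigma\tau=\tau$ for every $\tau\in\BB$, i.e.\ (undoing the reduction) every ray landing at $z_0$ is periodic. That they all share a single period for the original $f$ follows from the same observation applied to the first return map: if $p$ is the period of $z_0$ under the original $f$, then $\sigma^p$ acts on the finite angle set of $z_0$ as an order-preserving bijection, hence as a rotation all of whose orbits have a common length $m$; consequently every ray landing at $z_0$ has period exactly $pm$, which one checks equals the period of $g_{\szero}$. Finiteness of $\BB$ itself I would obtain from \lemref{Rotation sets}: the closure of $\BB$ inside the (compact) angle space on which $\sigma$ is locally expanding is closed and $\sigma$-invariant, and once $\sigma$ is known to be invertible on it, \lemref{Rotation sets} forces it, hence $\BB$, to be finite.

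\noindent\textbf{Main obstacle.} The delicate point is exactly this last step: the invertibility of $\sigma$ on the \emph{closure} of $\BB$ — equivalently, an a priori finite bound on the number of rays landing at $z_0$. Injectivity and surjectivity of $\sigma$ on $\BB$ itself are cheap, as above, but passing to the closure is not automatic, since an angle in the closure of $\BB$ need not be the angle of a ray landing at $z_0$, so the disjointness argument for injectivity does not obviously apply to limit angles. This is precisely the substance of \cite[Lemma~18.12]{Mi}, where it is settled by a counting argument in a linearizing neighborhood: the fundamental domains of the landing rays shrink toward $z_0$ at the uniform geometric rate supplied by \propref{Fundamental domains shrinking}, so only boundedly many pairwise disjoint such rays can be accommodated. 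This count uses only the functional equation (\ref{Functional}), the uniform shrinking of fundamental domains, and the cyclic order of rays, all of which are available in the exponential setting, so it carries over verbatim.
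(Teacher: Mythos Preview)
Your overall outline follows Milnor's argument, but there is a genuine gap in the exponential case, and it is exactly the point the paper isolates. You write ``the closure of $\BB$ inside the (compact) angle space on which $\sigma$ is locally expanding'' and then propose to invoke \lemref{Rotation sets}. In the polynomial setting the angle space is $\R/\Z$, which is compact, and this is fine. In the exponential setting the address space is (a subset of) $\Z^{\N}$, which is \emph{not} compact; a priori the addresses of rays landing at $z_0$ could have unbounded entries, and then neither \lemref{Rotation sets} nor any counting in a linearizing disk will give finiteness. Your proposed fix --- ``only boundedly many pairwise disjoint such rays can be accommodated'' because fundamental domains shrink --- does not close this gap: shrinking fundamental domains do not by themselves bound the number of disjoint arcs approaching a point, and in any case the exponential analogue of \propref{Fundamental domains shrinking} (namely \propref{Fundamental domains shrinking for exponentials}) is only proved for pullbacks of a single ray with bounded address, not uniformly over all rays.

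The paper's route is different and short: it uses Remark~\ref{Intersecting rays}, a purely combinatorial/topological fact specific to exponentials, saying that the addresses of two rays landing at the same point differ by at most one in every entry. Since the hypothesized periodic ray $g_{\szero}$ has bounded address (say $\|\szero\|_\infty\le M-1$), every $\tau\in\BB$ satisfies $\|\tau\|_\infty\le M$, so $\BB\subset\Sigma_D$ with $D=2M+1$. Now $\Sigma_D$ is compact and $\sigma$ is locally expanding on it, and Milnor's Lemma~18.12 argument (or \lemref{Rotation sets}) runs verbatim. You should replace your ``counting in a linearizing neighborhood'' paragraph by this boundedness step; once it is in place, the rest of your plan is correct.
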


\begin{cor}
All the rays landing at  a repelling fixed point are periodic.
\end{cor}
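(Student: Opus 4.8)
The plan is to deduce this corollary directly from the results just established, without any new dynamical input. By \thmref{Landing of a ray}, there exists at least one dynamic ray $g_{\s}$ landing at the repelling fixed point $\alpha$, obtained via the construction in that proof. By \propref{Periodicity of landing ray}, any such ray is periodic. Hence there is a periodic ray landing at $\alpha$.

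Now I would invoke \lemref{One for all}: since a periodic ray (namely $g_\s$) lands at the repelling periodic point $\alpha$, only finitely many rays land at $\alpha$, and all of them are periodic of the same period. In particular every ray landing at $\alpha$ is periodic, which is exactly the statement of the corollary.

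So the proof is essentially a one-line chaining of \thmref{Landing of a ray}, \propref{Periodicity of landing ray}, and \lemref{One for all}. There is no real obstacle here; the only thing to be slightly careful about is that the construction of \thmref{Landing of a ray} is phrased for a repelling \emph{fixed} point, but this is harmless: as in the proof of Theorem A, one passes to an iterate of $f$ for which the given repelling periodic point becomes a fixed point, and a ray is periodic for $f$ iff it is periodic for the iterate. (For the corollary as literally stated, $\alpha$ is already a fixed point, so even this remark is unnecessary.) I would write the proof in two sentences accordingly.

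\begin{proof}
By \thmref{Landing of a ray} there is at least one dynamic ray landing at $\alpha$, and by \propref{Periodicity of landing ray} any ray arising from that construction is periodic; hence some periodic ray lands at $\alpha$. By \lemref{One for all}, it follows that only finitely many rays land at $\alpha$ and that all of them are periodic (of the same period).
\end{proof}
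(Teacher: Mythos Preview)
Your proposal is correct and matches the paper's approach exactly: the corollary is stated without proof in the paper precisely because it follows immediately by chaining \thmref{Landing of a ray}, \propref{Periodicity of landing ray}, and \lemref{One for all}, just as you do.
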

This concludes the proof of Theorem A in the polynomial case.

\subsection{Proof of Theorem B in the polynomial case}
In this subsection we prove Theorem B in the polynomial case and we show that under an additional combinatorial condition, every point in a hyperbolic set is the landing point of only finitely many dynamic rays.

\begin{proof}[Proof of Theorem B] Let $\Lambda$ be a hyperbolic set. 
Up to taking an iterate of $f$, 
 we can assume that there is a  $\delta$-neighborhood $U$ of $\Lambda$ such that $|f'(x)|>\eta >1$ for all  $x\in U$. 

Fix some $x_0$ in $\Lambda$, and let us construct a dynamic ray landing at $x_0$. Let $x_n:=f^n(x_0)$, $B'_n:= B_\delta(x_n)$, $B_n:= B_{\delta/\eta }(x_{n})$.
Observe that for each $n$ there is a branch $\psi$ of $f^{-1}$ such that  $\psi(B'_n)\subset B_{n-1}$. We  refer to $\psi^m$ as the composition of such branches, mapping $x_n$ to $x_{n-m}$.
Let $\epsilon:=\delta-\delta/\eta$, and let $t_\epsilon$ be such that the length of fundamental domains starting at $t<t_\epsilon$ is smaller than $\epsilon$ (see Proposition~\ref{Fundamental domains shrinking}).
Let us  define  a family of rays to which we will apply the construction of Theorem~\ref{Landing of a ray}.

Let $t_0<t_\epsilon$ be such that each $B_n$ contains a point of potential $t_0$. For each $n$, let $\AA_{x_n}$ be the family of angles $\s$ such that $g_{\s}(t_0)\in B_n$. By Proposition~\ref{Fundamental domains shrinking}, and because $\dist(\partial B_n,\partial B_n')=\epsilon$, $I_{t_0}(g_{\s})\subset B_n'$ for any $\s\in \AA_{x_n}$. For each $\s\in\AA_{x_n}$, denote by $\psi_*^m \gs$ the ray to which $\psi^m(\gs)(t_0) $ belongs to (see Figure~\ref{B}). For any $m\in\N$ let $t_m:=F^{-m}(t_0)$; following the construction of Theorem~\ref{Landing of a ray}, we obtain that 

\[(\psi_*^m g_{\sn})(t_m, F(t_0))\subset B_{n-m}'\text{ for any }m\leq n,\ \sn\in \AA_{x_{n}}.\] 
Also, 
\begin{equation}\label{pluto}
I_{t_m}(\psi_*^n g_\sn)\subset B\left(x_0,{\frac{\delta}{\eta^m}}\right)\text{ for any } m\leq n,\ \sn\in\AA_{x_n}.
\end{equation}


{Consider now any sequence $\{\sn\}$ of angles such that $\sn\in\AA_{x_n}$, and   let $\an$ be the sequence of angles such that $g_{\a_n}:=\psi_*^n g_\sn$. Observe that $\an\in\AA_{x_0}$, that  $\sigma^n\an=\sn$, and that $I_{t_m}(g_{\a_n})\subset B\left(x_0,{\frac{\delta}{\eta^m}}\right)\text{ for any } m\leq n,\ n\in\N$ by  (\ref{pluto}). Then by  Lemma~\ref{landing}, for any accumulation point $\a$ of the sequence $\{\an\}$ the ray $g_\a$ lands at $x_0$.}
\end{proof}

\begin{figure}[hbt!]
\begin{center}
\def\svgwidth{15cm}
\begingroup%
  \makeatletter%
  \providecommand\color[2][]{%
    \errmessage{(Inkscape) Color is used for the text in Inkscape, but the package 'color.sty' is not loaded}%
    \renewcommand\color[2][]{}%
  }%
  \providecommand\transparent[1]{%
    \errmessage{(Inkscape) Transparency is used (non-zero) for the text in Inkscape, but the package 'transparent.sty' is not loaded}%
    \renewcommand\transparent[1]{}%
  }%
  \providecommand\rotatebox[2]{#2}%
  \ifx\svgwidth\undefined%
    \setlength{\unitlength}{1530.7428246bp}%
    \ifx\svgscale\undefined%
      \relax%
    \else%
      \setlength{\unitlength}{\unitlength * \real{\svgscale}}%
    \fi%
  \else%
    \setlength{\unitlength}{\svgwidth}%
  \fi%
  \global\let\svgwidth\undefined%
  \global\let\svgscale\undefined%
  \makeatother%
  \begin{picture}(1,0.41449878)%
    \put(0,0){\includegraphics[width=\unitlength,page=1]{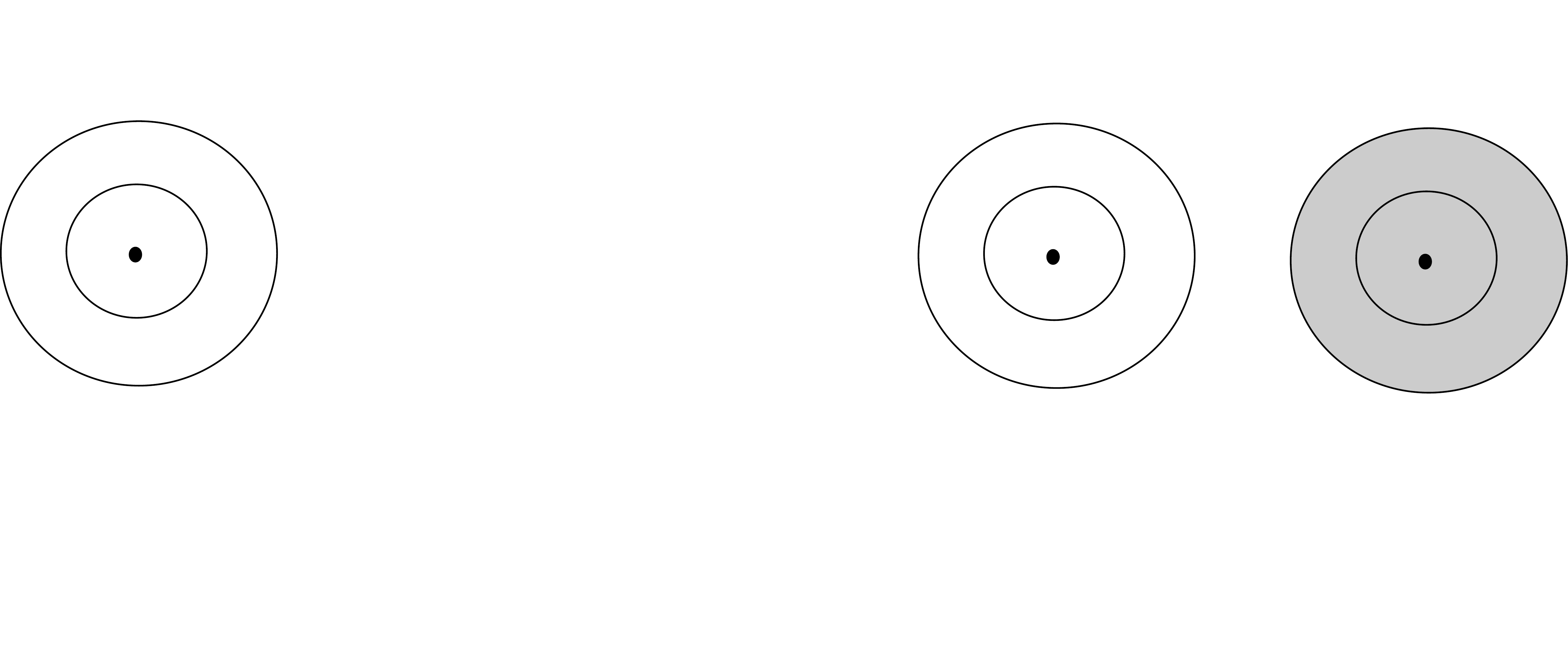}}%
    \put(0.32305509,0.25357081){\color[rgb]{0,0,0}\makebox(0,0)[lb]{\smash{$\ldots$}}}%
    \put(0.07667611,0.22669313){\color[rgb]{0,0,0}\makebox(0,0)[lb]{\smash{$x_0$}}}%
    \put(0.65637126,0.22501875){\color[rgb]{0,0,0}\makebox(0,0)[lb]{\smash{$x_{n-1}$}}}%
    \put(0.8994326,0.22818635){\color[rgb]{0,0,0}\makebox(0,0)[lb]{\smash{$x_n$}}}%
    \put(0.02590709,0.13859399){\color[rgb]{0,0,0}\makebox(0,0)[lb]{\smash{$B'_0$}}}%
    \put(0.63662834,0.13411438){\color[rgb]{0,0,0}\makebox(0,0)[lb]{\smash{$B'_{n-1}$}}}%
    \put(0.87703449,0.13112796){\color[rgb]{0,0,0}\makebox(0,0)[lb]{\smash{$B'_n$}}}%
    \put(0.02881845,0.19409879){\color[rgb]{0,0,0}\makebox(0,0)[lb]{\smash{$B_0$}}}%
    \put(0.6176851,0.19031274){\color[rgb]{0,0,0}\makebox(0,0)[lb]{\smash{$B_{n-1}$}}}%
    \put(0.85131877,0.18838216){\color[rgb]{0,0,0}\makebox(0,0)[lb]{\smash{$B_n$}}}%
    \put(0,0){\includegraphics[width=\unitlength,page=2]{HypSetNew.pdf}}%
    \put(0.7769897,0.39841182){\color[rgb]{0,0,0}\makebox(0,0)[lb]{\smash{$\psi$}}}%
    \put(0.41264746,0.00420547){\color[rgb]{0,0,0}\makebox(0,0)[lb]{\smash{$\psi^n$}}}%
    \put(0,0){\includegraphics[width=\unitlength,page=3]{HypSetNew.pdf}}%
    \put(0.89401234,0.30303879){\color[rgb]{0,0,0}\makebox(0,0)[lb]{\smash{$g_{\s_n}$}}}%
    \put(0.69710319,0.30247333){\color[rgb]{0,0,0}\makebox(0,0)[lb]{\smash{$g_{\s_{n-1}}$}}}%
    \put(0,0){\includegraphics[width=\unitlength,page=4]{HypSetNew.pdf}}%
    \put(0.1222189,0.29575392){\color[rgb]{0,0,0}\makebox(0,0)[lb]{\smash{$g_{\a_n}$}}}%
  \end{picture}%
\endgroup%

\caption{Construction of a landing ray for a point $x_0\in\Lambda$. Here only the pieces of rays $g_\sn$ and $g_{\s_{n-1}}$ between potentials $t_0$ and $F(t_0)$ are shown in $B'_n$ and $B'_{n-1}$ respectively. Similarly, only the piece of ray $g_{\a_n}$ between potential $F^{-n(t_0)}$ and $t_0$ is shown inside $B_0$.} \label{B}
\end{center}
\end{figure}

If  we assume  $f$ to be a unicritical polynomial satisfying some combinatorial conditions, we can show that there are only finitely many rays landing at each $x\in\Lambda$.
We say that two points $z_1,z_2$ are \emph{combinatorially separated} if there is a curve $\Gamma$  formed by two dynamic rays together with a common landing point such that  $z_1,z_2$ belong to different components of $\C\setminus\Gamma$.


\begin{rem}\label{Rotation symmetry}
Let $f$ be a unicritical polynomial of degree $D$. Then there  is a rotational symmetry given by $f(e^{2\pi i /D}z)= f(z)$, implying that two dynamic rays of angles $\s,\s'$ land together if and only if the dynamic rays of angles $\s+j/D, \s'+j/D$ (obtained from the dynamic rays of angles $\s,\s'$ through a rotation of angle $2\pi j /D$) do for $j=1\ldots D-1$. 
 Then by the cyclic order at infinity, two dynamic rays of angles $\s,\s'$ can land together only if $|\s-\s'|\leq 1/D$; otherwise, the two dynamic rays of angles $\s+1/D, \s'+1/D$  would intersect them, giving a contradiction.
 \end{rem}
\begin{prop}\label{Finitely many rays} Let $f(z)=z^D+c$ be a unicritical polynomial of degree $D$, and let  $\Lambda$ be a hyperbolic set. Suppose moreover that either $0$  is accessible or that any $x\in\Lambda$ is combinatorially separated from $0$. Then there are only finitely many dynamic  rays landing at each $x\in\Lambda$.\end{prop}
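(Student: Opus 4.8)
The plan is to show that the set of angles of rays landing at a single backward‑and‑forward orbit inside $\Lambda$ is finite, by putting it in the form required by \lemref{Rotation sets}. Suppose $x\in\Lambda$ and write $\Theta(y)\subset\R/\Z$ for the set of angles of the rays landing at a point $y$; by Theorem~B, $\Theta(y)\ne\emptyset$ for $y\in\Lambda$. If $x$ is preperiodic, then $f^n(x)$ is a repelling periodic point for large $n$, and the inequality $\#\Theta(y)\le\#\Theta(f(y))$ for $y\in\Lambda$ (proved below) together with \lemref{One for all} and Theorem~A already gives $\#\Theta(x)<\infty$; so assume $x$ is not preperiodic. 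The inequality, and more, comes from the following: since $\Lambda$ is hyperbolic, $f'$ does not vanish on $\Lambda$, so $f$ is a local homeomorphism at $y$; if two distinct rays landing at $y$ had angles differing by $j/D$ then, by the rotational identity $f(e^{2\pi i/D}z)=f(z)$ of Remark~\ref{Rotation symmetry}, $f$ would carry them to the same ray at $f(y)$, and pulling that ray back through the injective local inverse of $f$ at $y$ would force the two rays to coincide near $y$; hence multiplication by $D$ on angles defines an \emph{injective} map $\Theta(y)\to\Theta(f(y))$ for every $y\in\Lambda$, which moreover preserves the cyclic order (because $f$ is orientation‑preserving at $y$ and the cyclic order of rays at a landing point agrees with that of their angles at infinity).

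Next I use the hypothesis to confine all these rays to a single sector avoiding $0$. If $x$ is combinatorially separated from $0$, the separating curve $\Gamma=g_\tau\cup\{z_0\}\cup g_{\tau'}$ has $g_\tau,g_{\tau'}$ landing at a common point $z_0$, which by the Snail Lemma is repelling or parabolic periodic, and by \lemref{One for all} the rays $g_\tau,g_{\tau'}$ are periodic of a common period $q$, so $f^q(\Gamma)=\Gamma$; as $f^q$ is orientation‑preserving at $z_0$ it does not interchange the two components of $\C\setminus\Gamma$, so the component $S\ni x$ satisfies $f^q(S)=S$ and $0\notin\overline S$, whence $(f^q)^{-1}(S)=S$. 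A ray landing at a point of $S$ cannot cross $\partial S\subset\Gamma$ nor pass through $z_0$, so it lies in $S$; thus $\Theta(y)\subset A_S\subsetneq\R/\Z$, the arc of $S$ at infinity, for every $y\in\Lambda\cap S$. (In the alternative case, $0$ accessible, the $D$ rays $g_{\gamma+k/D}$, $0\le k<D$, all land at $0$ by the rotational symmetry and cut $\C$ into $D$ sectors each mapped homeomorphically by $f$ onto a slit plane; $x$ lies in one of them, confining $\Theta(x)$ to an arc of length $1/D$, and one runs the same scheme with this configuration and its forward iterates in place of $\Gamma$, or falls back on the preperiodic case if $0$ is preperiodic.) Now, reducing first --- using that $\#\Theta$ is nondecreasing along orbits --- to the case in which $x$ lies in the eventual image $\bigcap_n f^n(\Lambda)$ (where every point has a preimage), and using $(f^q)^{-1}(S)=S$, one obtains a full backward orbit $\dots\to x^{(-2)}\to x^{(-1)}\to x$ under $f^q$ in $\Lambda\cap S$; together with $x,f^q(x),f^{2q}(x),\dots$ this is a bi‑infinite $f^q$‑orbit $\mathcal O\subset\Lambda\cap S$ on whose points $f^q$ is injective. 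Set $\AA:=\overline{\bigcup_{y\in\mathcal O}\Theta(y)}$, a compact subset of $\overline{A_S}$.

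Finally, $\AA$ is invariant under multiplication by $D^q$ (a ray at $y\in\mathcal O$ maps to one at $f^q(y)\in\mathcal O$), and I claim multiplication by $D^q$ is a bijection of $\AA$. Injectivity: if it identified $\theta\in\Theta(y)$ and $\theta'\in\Theta(y')$ with $y,y'\in\mathcal O$, then $g_\theta,g_{\theta'}$ would land at $f^q(y)=f^q(y')$, forcing $y=y'$ (since $f^q$ is injective on $\mathcal O$), and then, by the local‑homeomorphism argument of the first paragraph applied to $f^q$ at $y\in\Lambda$, $\theta=\theta'$. Surjectivity: given $\theta\in\Theta(y)$ with $y\in\mathcal O$, pick $\tilde y\in\mathcal O$ with $f^q(\tilde y)=y$; since $\tilde y$ is not a critical point of $f^q$, the ray $g_\theta$ has a unique local preimage landing at $\tilde y$, a dynamic ray $g_{\theta'}$ with $D^q\theta'=\theta$ and $\theta'\in\Theta(\tilde y)\subset\AA$. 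Since multiplication by $D^q$ is locally expanding on $\R/\Z$, \lemref{Rotation sets} gives $\#\AA<\infty$, and as $x\in\mathcal O$ we conclude $\#\Theta(x)\le\#\AA<\infty$. The main obstacle is to make the bijectivity claim survive passage to the closure $\AA$: one must know that a limit of rays landing at points of $\mathcal O$ lands at the limit point (a uniform‑landing statement that follows from the hyperbolicity of $\Lambda$ together with a bounded‑fundamental‑domain estimate of the kind used for Theorem~B, applied along the orbit), and one must rule out that two \emph{distinct} limit points of $\mathcal O$ carry rays identified by $f^q$ --- and it is precisely here that the confinement to the single sector $S$ (forcing all the relevant preimages of a point to remain inside $S$) does the essential work, which is why the combinatorial hypothesis is needed.
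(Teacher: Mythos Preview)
Your approach has a genuine gap at the sector-confinement step. The claim $(f^q)^{-1}(S)=S$ is false: even when $f^q(\Gamma)=\Gamma$, the preimage $(f^q)^{-1}(\Gamma)$ strictly contains $\Gamma$ since $\deg f^q>1$, so $S$ is neither forward nor backward invariant under $f^q$. (For the basilica $f(z)=z^2-1$ with $\Gamma$ the ray-pair of angles $1/3,2/3$ landing at the $\alpha$-fixed point, the point $-\beta$ lies in the sector $S$ not containing $0$, yet $f^2(-\beta)=\beta\notin S$.) Hence you cannot force the backward orbit $x^{(-n)}$ to stay in $S$, and the confinement $\Theta(y)\subset A_S$ fails along $\mathcal O$. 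There is also a prior issue: the definition of combinatorial separation does not require the rays forming $\Gamma$ to be periodic, so invoking the Snail Lemma and Lemma~\ref{One for all} to obtain a period $q$ with $f^q(\Gamma)=\Gamma$ is unjustified. Even granting confinement, injectivity of $\sigma^q$ on the closure $\AA$ would require $f^q$ to be injective on $S$; but $S$ may contain critical points of $f^q$ (points of $f^{-j}(0)$ for $0<j<q$), and correspondingly the arc $A_S$ can have length close to $1$ and hence contain many of the $D^q$ equally spaced $\sigma^q$-preimages of a given angle. You rightly flag the closure step as the main obstacle, but the sector does not overcome it.

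The paper's argument avoids all of this by a different mechanism: instead of a single invariant set to which Lemma~\ref{Rotation sets} applies, it proves a uniform contraction for $\sigma^{-1}$ across the whole family $\{\AA_x\}_{x\in\Lambda}$, namely $|\sigma^{-1}\a-\sigma^{-1}\a'|<D^{-1}|\a-\a'|$ whenever $\a,\a'\in\AA_{f(x)}$ are $\eps$-close; pulling a finite $\eps$-cover of $\R/\Z$ back along the forward orbit of any $y_0\in\Lambda$ then bounds $\#\AA_{y_0}$ uniformly. The contraction is proved by contradiction: its failure produces, via the $D$-fold symmetry of Remark~\ref{Rotation symmetry}, $D$ pinching ray-pairs enclosing a region $Q$ containing both $0$ and a point of $\Lambda$, into which only $D$ symmetrically placed limiting rays can enter---directly contradicting either hypothesis. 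The combinatorial assumption is used only at this final step, not to manufacture an invariant sector.
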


{ \begin{proof}
Since $\Lambda$ is a hyperbolic set, up to proving the statement for $f^k(\Lambda)$ we can assume that $0\notin \Lambda$. 
 For $x\in\Lambda$, let $\AA_x$ be the set of angles of the rays landing at $x$. 
By Theorem~B, each $\AA_x$ is non empty.
 Near $x$, $f$ is locally a homeomorphism,  so the set $\AA_x$ is mapped bijectively to the set $\AA_{f(x)}$ by the shift map $\sigma$ and there is a well defined inverse $\sigma^{-1}: \AA_{f(x)}\rightarrow \AA_x$. Recall that $\sigma$ is locally expanding by a factor $D$. 
 Let us first assume that $\sigma^{-1}$ is uniformly continuous on the sets $\{\AA_x\}_{x\in\Lambda}$, in the sense that
 there exists an $\eps>0$ such that for any $x\in\Lambda$, and for any  $\a,\a'\in \AA_{f(x)}$ such that $|\a-\a'|<\eps$ we have
\begin{equation}\label{contraction}
  |\sigma^{-1} \a - \sigma^{-1}\a'|< \frac 1D \,|\a- \a'|. 
\end{equation}

\noindent Fix a point $y_0\in\Lambda$ and  let $y_n:=f^n(y_0)$. Consider a finite cover of $\R/\Z$ by $\epsilon$-balls, say $N$ balls, where $\epsilon$ is given by uniform continuity of $\sigma^{-1}$. 
Then for any $n$, $\AA_{y_n}$ is covered by $N$ balls of radius $\epsilon$.
By (\ref{contraction}), their $n$-th preimages have diameter $\epsilon/D^n$ and cover $\AA_{y_0}$. Passing to the limit, we conclude that $\AA_{y_0}$ contains at most $N$ points. 
 
So it is only left to show that $\sigma^{-1}$ is uniformly continuous in the sense of (\ref{contraction}). 
Suppose by contradiction that there is a sequence of points $x_n\in \Lambda$, and two sequences of angles  
$\an, \an'\in \AA_{f(x_n)}$ such that $| \an-\an'|\ra 0$, but 
$|\sigmam \an-\sigmam \an'|\ra \delta>0$. Call $\sn,\sn'$ the angles $\sigmam \an,\sigmam \an'$, and assume for definiteness that $\sn<\sn'$.  Since $\an$ and $\an'$ converge to the same angle and the preimages under $\sigma$ of any angle are spaced at $1/D$ apart, $ \delta= k/D$ for some integer $k\in [1,  D-1]$; since the rays $g_\sn$ and $g_{\sn'}$ land together at the point $x_n$, $k=1$ by Remark~\ref{Rotation symmetry}.
By the $D$-fold symmetry of the Julia set, the rays of angles $\sn+j/D, \sn'+j/D$ for $j=1\ldots D-1$ also land together at the points $e^{2\pi i j/D}x_n$ (see Figure~\ref{C}).

\begin{figure}[hbt!]
\begin{center}
\def\svgwidth{6cm}

\begingroup%
  \makeatletter%
  \providecommand\color[2][]{%
    \errmessage{(Inkscape) Color is used for the text in Inkscape, but the package 'color.sty' is not loaded}%
    \renewcommand\color[2][]{}%
  }%
  \providecommand\transparent[1]{%
    \errmessage{(Inkscape) Transparency is used (non-zero) for the text in Inkscape, but the package 'transparent.sty' is not loaded}%
    \renewcommand\transparent[1]{}%
  }%
  \providecommand\rotatebox[2]{#2}%
  \ifx\svgwidth\undefined%
    \setlength{\unitlength}{1224.6823154bp}%
    \ifx\svgscale\undefined%
      \relax%
    \else%
      \setlength{\unitlength}{\unitlength * \real{\svgscale}}%
    \fi%
  \else%
    \setlength{\unitlength}{\svgwidth}%
  \fi%
  \global\let\svgwidth\undefined%
  \global\let\svgscale\undefined%
  \makeatother%
  \begin{picture}(1,0.90138725)%
    \put(0,0){\includegraphics[width=\unitlength,page=1]{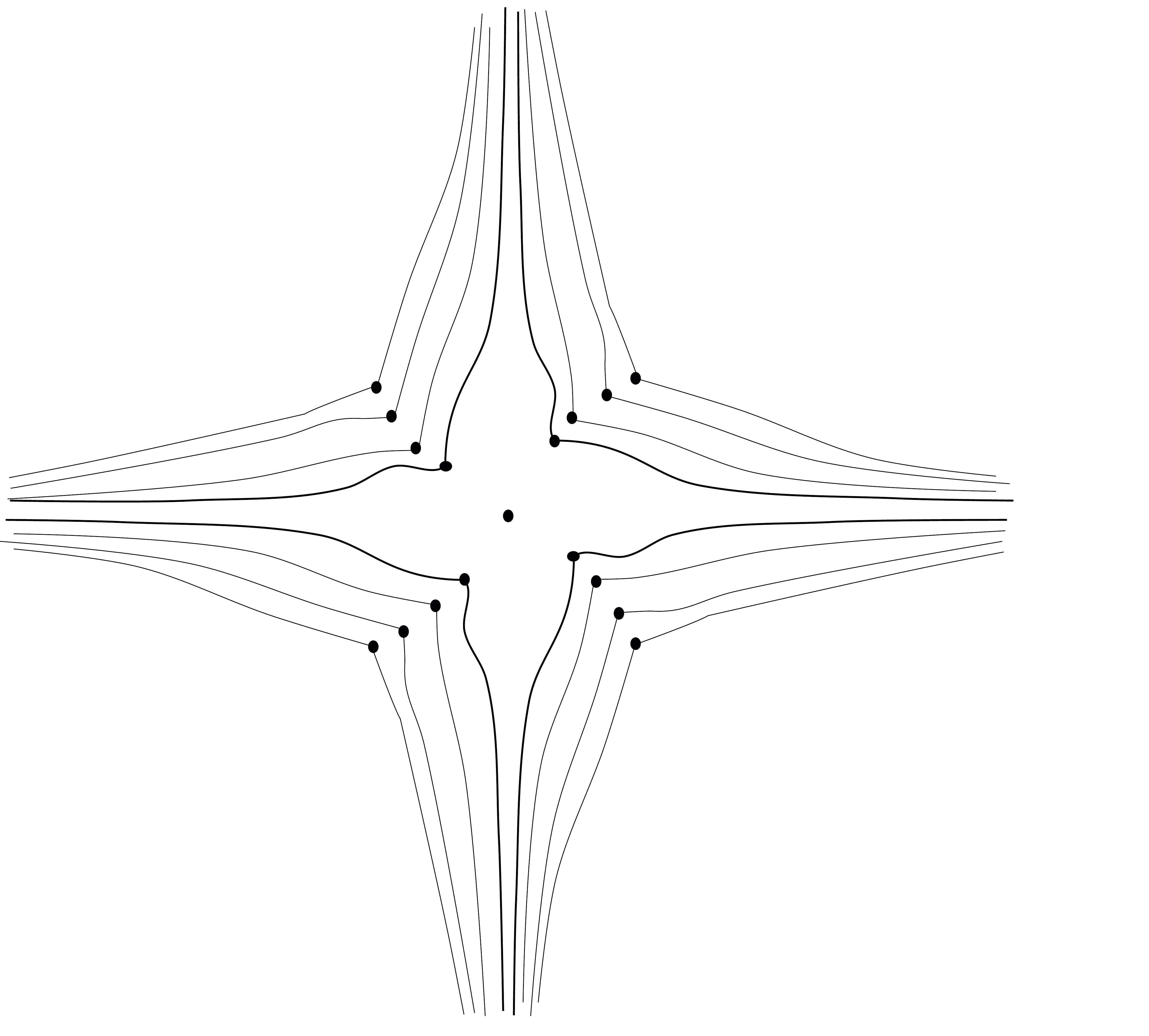}}%
    \put(0.453717,0.4545329){\color[rgb]{0,0,0}\makebox(0,0)[lb]{\smash{$0$}}}%
    \put(-0.05672584,0.1900479){\color[rgb]{0,0,0}\makebox(0,0)[lb]{\smash{}}}%
    \put(0.33779153,0.4424839){\color[rgb]{0,0,0}\makebox(0,0)[lb]{\smash{$V_n$}}}%
    \put(0.45173409,0.38728188){\color[rgb]{0,0,0}\makebox(0,0)[lb]{\smash{$x_n$}}}%
    \put(0.84235469,0.42269233){\color[rgb]{0,0,0}\makebox(0,0)[lb]{\smash{$\s'_n$}}}%
    \put(0.45136835,0.04091894){\color[rgb]{0,0,0}\makebox(0,0)[lb]{\smash{$\s_n$}}}%
    \put(0.84332672,0.47906772){\color[rgb]{0,0,0}\makebox(0,0)[lb]{\smash{$\s_n+1/4$}}}%
    \put(0.46192615,0.8670669){\color[rgb]{0,0,0}\makebox(0,0)[lb]{\smash{$\s_n'+1/4$}}}%
    \put(0,0){\includegraphics[width=\unitlength,page=2]{Symmetry2.pdf}}%
  \end{picture}%
\endgroup%

\end{center}
\caption{\small Illustration to the proof of Proposition~\ref{Finitely many rays} for $D=4$; the region $V_n$ is shaded in gray. Its boundary consists of the rays $g_{\s_n},g_{\s'_n}$ and their rotations by $\pi/2$. The limiting rays are colored in red, and they all either land at $0$ or land at four different points which are rotations of each other by $\pi/2$. 
}\label{C}
\end{figure}

\noindent Altogether, these $D$ pairs of dynamic rays  divide $\C$ into $D+1$ connected component.
Let $V_n$ be the  one that contains the critical point $0$. 
Since for each $i$ we have: 
$$
   (\sn'+j/D)-(\sn+j/D) \ra 1/D\quad \mathrm{as}\ n\to \infty,
$$
 we conclude that 
\begin{equation}\label{sn} 
(\sn+(j+1) /D)-(\sn' + j/D)\ra 0\quad \mathrm{as}\  n\ra\infty.
\end{equation}
  
Let $Q=\bigcap \ov{V_n}$. Then  $0\in Q$ by definition, and $Q\cap\Lambda\neq\emptyset$ because $\Lambda$ is compact and $\ov{V_n}\cap\Lambda\neq\emptyset$
for all $n$. Let $z\in Q\cap \Lambda$.
By (\ref{sn}), there are exactly $D$ limiting rays entering  $Q$.  By symmetry, if these rays land, they either all land at $0$, or they land at $D$ different  points belonging to some orbit of the $2\pi /D$-rotation.  This gives an immediate contradiction with either of our assumptions:
 
\smallskip\noindent $\bullet$  
  Since there are no pairs of rays which could separate $0$ from points of $Q$,
  the point $z\in \Lambda$ is not combinatorially separated from $0$, giving a  contradiction with the first assumption;   

\smallskip\noindent $\bullet$
In the case when $0$ is accessible, this implies that no other accessible point can belong to $Q$; in particular $z\in \Lambda$ is not accessible, contradicting Theorem B.

\noindent This concludes the proof of (\ref{contraction}) and hence of the proposition.
\end{proof}
}

\section{Dynamic rays in the exponential family}
\label{Dynamic rays in the exponential family}

For the remaining two sections, $f(z)= e^z+c$ will be an exponential function, and $J(f)$ will be its Julia set. Any two exponential maps whose singular values differ by $2\pi i $ are conformally conjugate, so we can assume $-\pi\leq \Im c<\pi$.
 Let   $\arg(z)$ be defined on $\C\setminus\R_-$ so as to  take values in $(-\pi,\pi)$, and let   
 \[R:=\{z\in\C:\ \Im z=\Im c,\ \Re z\leq\Re c \}.\]  
\noindent We define a family $\{L_n\}$ of inverse branches for $f(z)=e^z+c$ on $\C\setminus   R$ as
\[L_{n}(w):=\log|w-c|+i\arg(w-c)+2\pi i n. \] 
\noindent Observe that $L_n$ maps $\C\setminus R$ biholomorphically to the strip 
\[ S_n:=\{z\in\C: 2\pi  n  -\pi<\Im z<2\pi n+\pi\}.\]
\noindent Observe also that $|(L_n)'(w)|=\frac{1}{|w-c|}.$ 

Dynamic rays  have been first introduced by Devaney and Krych in \cite{DK} (with the name {\emph{hairs}}) and studied for example in \cite{BD1}. A full classification of the set of escaping points in terms of dynamic rays has been then completed by Schleicher and Zimmer in \cite{SZ1}.
For points whose iterates never belong to $R$, we can consider itineraries with respect to the partition of the plane into the strips $\{S_n\}$, i.e.
 \[\text{itin}(z)=s_0 s_1 s_2\dots\text{ if and only if }f^{j}(z)\in S_{s_j}.\]
\noindent For a point $z$ whose itinerary with respect to this partition exists,  we  refer to it as the \emph{address} of $z$. {The set $\SS$ of all allowable itineraries is called the \emph{set of addresses}; by construction, is is a subset of  $\Z^\N$. 
Addresses as sequences can be characterized also in terms of the growth of their entries. 
  According to the construction, addresses of points cannot have entries  growing faster than iterates of the  exponential function. Let us  use the function $F:t\mapsto e^t-1$ to model real exponential growth. 
A sequence $\s=s_0 s_1 s_2\dots$ is called \emph{exponentially bounded}  if there exists constants $A\geq 1/2\pi$, $x\geq0$ such that for all $k\geq 0$. \begin{equation}\label{address}
|s_k| < A F^{k}(x)\end{equation}
This growth condition turns out to be not only necessary but also sufficient for a sequence to be realized as an address (see \cite{SZ1}), so that $\SS$ also equals the set of all exponentially bounded sequences in $\Z^\N$ (see also Theorem~\ref{Existence of dynamic rays} below).} 

An address is called \emph{periodic} (resp. \emph{preperiodic}) if it is a periodic (resp. preperiodic) sequence. If $\s=s_0 s_1 s_2\dots,$ let $\|\s\|_\infty=\underset{i}\sup |s_i|/2\pi$. We call $\s$ \emph{bounded} if $\|\s\|_\infty<\infty$. For $j\in\Z$ and $\s\in\SS$,  $j \s$ stands for the sequence $j s_0 s_1 s_2\ldots$.
 
Given an address $\s$ we define its \emph{minimal potential} 
\[t_\s:=\inf \left\lbrace t>0:\lim\underset{k\ra\infty}\sup \frac{|s_k|}{F^{k}(t)}=0   \right\rbrace.\]
Observe that if $\s$ is bounded, $t_\s=0$. 

Definition, existence and properties of \emph{dynamic rays} for the exponential family are summarized in  the following theorem (\cite{SZ1}, Proposition 3.2 and Theorem 4.2; the quantitative estimates are taken from  Proposition 3.4).

\begin{thm}[Existence of dynamic rays]\label{Existence of dynamic rays}
Let $f(z)=e^z+c$ be an exponential map such that $c$ is non-escaping, and $K$ be a constant such that $|c|\leq K$. Let $\s=s_0 s_1 s_2\ldots\in \Z^\N$ be a sequence satisfying (\ref{address}), and  let $t_\s$ be the minimal potential for $\s$. Then  there exists a unique maximal injective curve $g_\s: (t_\s,\infty) \rightarrow \C$, consisting of escaping points, such that 
\begin{itemize}
\item[(1)]
$g_\s (t)$ has address $\s$ for $t>x+2\log (K+3)$;
\item[(2)]
$f (g_\s(t))=g_{\sigma \s}(F(t)) $;

\item[(3)] $|g_\s(t)-2\pi i s_0- t-t_\s|\leq 2 e^{-t}(K+2+2\pi |s_1|+2\pi A C)$ for large $t$,\\
      and for a universal constant $C$. 
\end{itemize}
\end{thm}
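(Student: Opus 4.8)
The plan is to construct $g_\s$ as the limit of an iterated pull-back scheme, read off the four stated properties from uniform estimates on that scheme, and obtain uniqueness from the very contraction that drives the convergence. Given $\s = s_0 s_1 s_2 \ldots$ satisfying (\ref{address}) with constants $A, x$, I would start from the horizontal line $g_\s^{0}(t) := t + 2\pi i s_0$ and set inductively
\[
  g_\s^{n}(t) := L_{s_0}\bigl(g_{\sigma\s}^{\,n-1}(F(t))\bigr).
\]
Any curve obeying the functional equation (2) and lying in the strips prescribed by $\s$ is a fixed point of this operator, so the task reduces to showing that the operator has a unique attracting fixed point on a suitable tail $\{t > T\}$ and then propagating it inward.

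\emph{Convergence on a tail.} Choose $T = T(A,x,K)$ large enough — using (\ref{address}) to control the heights $2\pi s_k$ against $F^{k}(t)$ — that throughout the scheme the pulled-back points stay out of $R$, have real part $\gtrsim \tfrac12 F(t)$, and satisfy $|w - c| \gtrsim e^{t}$. Since then $|L_{s_0}'(w)| = |w-c|^{-1} = O(e^{-t})$, the operator contracts in the weighted norm $\sup_{t>T} e^{t}\,|h(t) - t - 2\pi i s_0|$ and maps the corresponding space of curves into itself, because $\log|w-c| = t + \log(1-e^{-t}) + O(e^{-t})$ for a fast point $w$ of potential $\approx F(t)$. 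Hence $g_\s^{n} \to g_\s$ uniformly on compacta of $(T,\infty)$; the limit satisfies (2) there, consists of escaping points (its potential along the forward orbit grows like iterated $F$), and has address $\s$ once $t > x + 2\log(K+3)$, which is (1).

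\emph{Extension, injectivity, uniqueness, estimate (3).} I would extend $g_\s$ to $(t_\s,\infty)$ by further pull-backs $g_\s(t) = L_{s_0}(g_{\sigma\s}(F(t)))$, using that $g_{\sigma\s}$ is already defined on $(t_{\sigma\s},\infty)$ and that $F(t) > t_{\sigma\s}$ precisely when $t > t_\s$; this is exactly what makes $t_\s$ both the obstruction to any further pull-back (hence the maximality of the curve) and a natural normalization. Injectivity passes inward from the injectivity of $f$ on each strip $S_n$, and uniqueness of the maximal curve is immediate from the contraction above (two solutions agree on a tail, hence everywhere). For (3), one explicit pull-back suffices: with $w = g_{\sigma\s}(F(t))$ and the inductive bound on $|w - F(t) - 2\pi i s_1 - t_{\sigma\s}|$, the identity $g_\s(t) = 2\pi i s_0 + \log|w-c| + i\arg(w-c)$ gives the main term $t$ from the logarithm (the parametrization shift $t_\s$ entering via $\log(1-e^{-t})$ together with the corresponding shift for $\sigma\s$), the $2\pi|s_1|$ term from $\arg(w-c) \approx (2\pi s_1 - \Im c)/F(t)$, and the $2\pi A C$ term from absorbing the tail $s_2 s_3 \ldots$ via (\ref{address}).

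\emph{Main obstacle.} The delicate part is making the convergence step uniform down to the right threshold — choosing $T$ and running the induction so the contraction constant does not degenerate as the entries $s_k$ grow — and then verifying that the scheme extends to exactly $(t_\s,\infty)$ and no further, so that $t_\s$ simultaneously plays its two roles above. Matching the explicit constants in (3), i.e.\ tracking the error terms through $\log$ and $\arg$ and reconciling the $t_\s$ normalization across the shift, is the remaining (routine but fiddly) bookkeeping; in the paper this theorem is quoted from \cite{SZ1} rather than reproved.
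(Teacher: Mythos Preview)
Your final observation is the key one: the paper does not prove this theorem at all but quotes it from \cite{SZ1} (Propositions~3.2, 3.4 and Theorem~4.2 there), so there is no ``paper's own proof'' to compare against. Your sketch is essentially the Schleicher--Zimmer construction: iterated pull-back of straight lines via the branches $L_{s_k}$, contraction in a weighted sup-norm on a far-right tail, and extension inward to $t_\s$ by the functional equation. That is the correct approach and the one used in the cited source, so nothing more need be said.
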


\noindent The  curve $g_{\s}$ is called the \emph{dynamic ray of address $\s$}. As for polynomials, a dynamic ray is periodic or preperiodic if and only if its address is a periodic or preperiodic sequence respectively. 
Likewise, the \emph{fundamental domain} starting at $t$ for $g_{\s}$ is defined as the arc  $I_t(g_{\s}):= g_{\s}(t,F(t))$. 

\begin{rem}\label{Bound on bounded addresses} {Fix a parameter $c$ which is non-escaping. If $\s$ is bounded, the constant $x$ in Theorem~\ref{Existence of dynamic rays} can be taken arbitrarily small, so  property $(1)$ holds for $t>2\log (K+3)$.
Moreover,  $g_\s(t)$ is approximately straight, i.e. there exists a constant $C'(K, t_\s)$ such that  $|g'_\s(t)- 1|< C'(K,t_\s)$ for large $t$ and  $C'(K,t_\s)$ does not depend on $\s$ if $\s$ is bounded (see Proposition 4.6 in \cite{FS}).
It then  follows from the asymptotic estimates in Theorem~\ref{Existence of dynamic rays}  that for any  $t$ there exists a constant  $B(t, \|\s\|_\infty)$ such that 
\begin{equation}\label{length}
\leucl(I_t(g_\s))\leq B(t, \|\s\|_\infty))\sim e^t-t.
\end{equation}                  
}
\end{rem}

The notion of minimal potential is important to have a parametrization of the rays respecting some kind of transversal continuity.  The following lemma holds, and will play the role of Lemma~\ref{Angle-space continuity} for the exponential family. A proof can be found in \cite{Re3}; we will use the formulation from \cite{Re2}, Lemma 4.7).
\begin{lem}[Transversal continuity]\label{Convergence of rays}
Let $f$ be an exponential map, $\{\sn\}$ be a sequence of addresses, $ \sn\ra \s\in\SS$  such that $t_{\sn}\ra t_{\s}$. Then 
$g_{\sn}\ra g_{\s}$ uniformly on $[t_*, \infty)$ for all $t_*>t_{\s}$. 
\end{lem}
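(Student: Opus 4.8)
The plan is to pin down the rays at large potential using the explicit asymptotics of Theorem~\ref{Existence of dynamic rays}(3), and then to transport this to every potential $t>t_\s$ by pulling back along the functional equation $f(g_\s(t))=g_{\sigma\s}(F(t))$. Throughout I use that $\sn\to\s$ in $\SS$ forces each fixed entry to stabilize, i.e. $(\sn)_k=s_k$ for all $n$ large, and that this together with $t_{\sn}\to t_\s$ provides one pair of constants $A,x$ with $t_\s<x<t_*$ realizing (\ref{address}) simultaneously for $\s$ and for all large $n$.

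\textbf{Step 1 (large potentials).} Feeding these uniform constants into Theorem~\ref{Existence of dynamic rays}(3), and using $(\sn)_0=s_0$ and $(\sn)_1=s_1$ for $n$ large, one obtains
\[
\bigl|g_{\sn}(t)-g_\s(t)\bigr|\ \le\ 2\pi\,|(\sn)_0-s_0|+|t_{\sn}-t_\s|+C_1\,e^{-t}
\]
for $t$ above a threshold depending only on $A,x$ and the bound $K$ on $|c|$, with $C_1$ independent of $n$. Hence $\limsup_n\sup_{[T,\infty)}|g_{\sn}-g_\s|\le C_1e^{-T}$ for every large $T$; letting $T\to\infty$ already gives uniform convergence on $[T,\infty)$, but a prescribed $[t_*,\infty)$ with $t_*$ small is not yet covered.

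\textbf{Step 2 (pulling back).} Since $F(u)=e^u-1>u$ for $u>0$, we have $F^m(t_*)\to\infty$; choose $m$ so that $F^m(t_*)$ exceeds the Step~1 threshold for the shift $\sigma^m\s$ (Step~1 applies to the shifts because $\sigma^m\sn\to\sigma^m\s$ and $t_{\sigma^m\sn}=F^m(t_{\sn})\to F^m(t_\s)=t_{\sigma^m\s}$). Write $g_\s(t)=\Psi_m\bigl(g_{\sigma^m\s}(F^m(t))\bigr)$, where $\Psi_m:=L_{s_0}\circ\cdots\circ L_{s_{m-1}}$ is the branch of $f^{-m}$ prescribed by $\s$, and likewise $g_{\sn}(t)=\Psi_m\bigl(g_{\sigma^m\sn}(F^m(t))\bigr)$ with the same $\Psi_m$ for $n$ large. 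Each piece of ray $g_{\sigma^j\s}\bigl([F^j(t_*),\infty)\bigr)$ is a closed curve avoiding the non-escaping singular value $c$, and its distance $\delta_j$ to $c$ tends to $\infty$ as $j\to\infty$ since the piece runs off to infinity; as $|L_k'(w)|=1/|w-c|$, it follows that $\Psi_m$ is Lipschitz on a suitable neighbourhood of $g_{\sigma^m\s}\bigl([F^m(t_*),\infty)\bigr)$ with a constant $\Lambda$ bounded independently of $m$ (the relevant bound is the finite product $\prod_{j}\delta_j^{-1}$, which stays bounded as $m$ grows because $\delta_j\to\infty$). By Step~1 this neighbourhood also contains $g_{\sigma^m\sn}\bigl([F^m(t_*),\infty)\bigr)$ for $n$ large, so
\[
\sup_{t\ge t_*}\bigl|g_{\sn}(t)-g_\s(t)\bigr|\ \le\ \Lambda\,\sup_{\tau\ge F^m(t_*)}\bigl|g_{\sigma^m\sn}(\tau)-g_{\sigma^m\s}(\tau)\bigr|\ \le\ \Lambda\,C_1^{(m)}\,e^{-F^m(t_*)}
\]
for $n$ large, where $C_1^{(m)}$ is the Step~1 constant for $\sigma^m\s$. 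Since $C_1^{(m)}$ grows at most like $F^{m+1}(x)$ while $F^m(t_*)-F^m(x)\to\infty$ (because $x<t_*$), the right-hand side tends to $0$ as $m\to\infty$; picking $m$ large enough finishes the proof.

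\textbf{Main difficulty.} The crux is the quantitative balancing in Step~2: the shifts $\sigma^m\s$ have explosively growing entries, so the error term of Theorem~\ref{Existence of dynamic rays}(3) for $g_{\sigma^m\s}$ blows up with $m$, and one must check that pushing the base potential out to $F^m(t_*)$ beats this growth while the Lipschitz constants of the branches $\Psi_m$ of $f^{-m}$ remain uniformly bounded — the latter resting on the ray-pieces receding from the omitted value $c$ fast enough that $\sum_j\delta_j^{-1}<\infty$. This is exactly where the hypothesis $t_{\sn}\to t_\s$ (uniform control of the growth of the addresses) is indispensable; it is also why the paper invokes \cite{Re3,Re2} for the details instead of reproducing them.
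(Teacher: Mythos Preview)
The paper does not supply its own proof; it cites \cite{Re3} for a proof and \cite{Re2}, Lemma~4.7, for the formulation. Your two-step strategy---asymptotic control at large potential via Theorem~\ref{Existence of dynamic rays}(3), then transfer to $[t_*,\infty)$ by pulling back along inverse branches with controlled derivative---is indeed the standard route taken there, and your Step~2 is essentially sound once Step~1 is in hand.

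The gap is at the opening of Step~1: the assertion that $\sn\to\s$ together with $t_{\sn}\to t_\s$ yields a \emph{single} pair $(A,x)$ with $t_\s<x<t_*$ satisfying~(\ref{address}) for all large $n$ is false in general. Take $\s=000\ldots$ (so $t_\s=0$) and let $\sn$ have $(\sn)_k=0$ for $k\neq n$ and $(\sn)_n=\lfloor nF^n(1)\rfloor$. Then $\sn\to\s$ entrywise and each $t_{\sn}=0$ (only one entry is nonzero, so the $\limsup$ defining $t_{\sn}$ vanishes for every $t>0$); yet for any $x\le 1$ the minimal admissible $A$ for $\sn$ is at least $nF^n(1)/F^n(x)\ge n$, hence unbounded in $n$. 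Thus if $t_*\le 1$ no uniform $(A,x)$ with $x<t_*$ exists, the constant $C_1$ in your displayed inequality (which carries a factor $A$ from~(3)) blows up with $n$, and Step~1 delivers nothing; the same defect then propagates to $C_1^{(m)}$ in Step~2. The arguments in \cite{Re2,Re3} avoid this by returning to the iterated-logarithm construction of $g_\s$ from \cite{SZ1} and establishing convergence of those approximants uniformly over nearby addresses, with error governed by $t-t_{\sn}$ rather than by a global growth constant $A$; the summary bound~(3) with a single fixed $(A,x)$ is too coarse to carry the argument by itself.
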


It will be useful to keep in mind the next observation, which moreover  concurs to the validity of Lemma~\ref{One for all} in the exponential case.

\begin{rem}\label{Intersecting rays}
The addresses of two rays $g_\s, g_{\s'}$ landing together cannot  differ by more than one in any entry. Suppose by contradiction that they do: up to taking forward iterates of the two rays (which keep landing together by continuity of $f$) we can assume that $\s=s_0 s_1 s_2\ldots$ and $\s'=s'_0 s'_1 s'_2$ differ by more than one in their first entry. Suppose for definiteness that $s_0<s'_0$.
 By the $2\pi i $  symmetry of the dynamical plane, the rays with addresses $\widetilde{\s}':=(s_0'-1)s'_1 s'_2... $ and $\widetilde{\s}:=(s_0-1)s_1 s_2... $ land together. Since $|s_0-s'_0|\geq 2$, we have that $s_0<\widetilde{s}_0<s'_0$, so by the asymptotic estimates in Theorem~\ref{Existence of dynamic rays} the two curves $\Gamma=\ov{g_\s\cup g_{\s'}}$ and
  $\widetilde{\Gamma}=\ov{g_{\widetilde{\s}}\cup g_{\widetilde{\s}'}}$ would intersect  contradicting the fact that dynamic rays are disjoint.
\end{rem}


We now give  a sketch of the  proof of Lemma~\ref{One for all} in the exponential case, referring to the proof of Lemma 18.12 in\cite{Mi} for polynomials.
{\begin{proof}[Proof of Lemma~\ref{One for all} in the exponential case]
Let $z_0$  be a repelling periodic point which is the landing point of a periodic dynamic ray $g_\s$, and let $\AA$ be the set of addresses of the rays landing at $z_0$. Since $\|\s\|_\infty$ is bounded, by Remark~\ref{Intersecting rays} the norm of all addresses in $\AA$ is also bounded by some constant $M>0$.  Then $\AA\subset \Sigma_D$ for $D=2M+1$, where $\Sigma_D$ is represented as $\Sigma_D=\{-M,\ldots,M\}^{\N}$. The proof of Lemma 18.12 in \cite{Mi} can then be repeated.
\end{proof}
}
\section{Accessibility for exponential parameters with bounded postsingular set}
\label{Accessibility for exponential parameters with bounded postsingular set}

%
%
%
%
%
%

\subsection{Statement of theorems and some basic facts}

From now on we will consider an exponential function $f$ with bounded postsingular set, which implies that the singular value is non-recurrent; this excludes the presence of Siegel disks (see Corollary 2.10, \cite{RvS}).
The strategy used for the proof of Theorem A and B for polynomials can be extended to the exponential family  to prove the following theorems.

 The first theorem is Theorem A stated for the exponential family.

\begin{thm}[Accessibility of periodic orbits for non-recurrent parameters]\label{Non-recurrent parameters}
Let $f$ be an exponential map with bounded postsingular set. Then any repelling periodic point is the landing point of at least one and at most finitely many dynamic rays, all of which are periodic of the same period.
\end{thm}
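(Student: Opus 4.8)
The plan is to follow closely the architecture developed for polynomials in Section~\ref{Accessibility of repelling periodic orbits for polynomials with connected Julia set}, replacing each polynomial-specific ingredient by its exponential counterpart established in Section~\ref{Dynamic rays in the exponential family}. As in the polynomial case, after passing to an iterate we may assume the repelling periodic point is a fixed point $\alpha$; let $\mu>1$ be the modulus of its multiplier, $L$ a linearizing neighbourhood, and $\psi$ the branch of $f^{-1}$ fixing $\alpha$, so that the contraction estimate $\frac{1}{C\mu^n}<|(\psi^n)'(x)|<\frac{C}{\mu^n}$ holds on $L$ exactly as in \eqref{Contraction}. Note that since $\alpha$ is non-escaping and the postsingular set is bounded, $\alpha$ and all its $\psi$-preimages stay in a bounded region, and the relevant branches of $f^{-1}$ are well-defined there; moreover all addresses that will occur are bounded, with a uniform bound $M$ on $\|\cdot\|_\infty$ coming from Remark~\ref{Intersecting rays} (or simply from the fact that $\alpha$ lies in a fixed bounded set and the asymptotic estimates in Theorem~\ref{Existence of dynamic rays} constrain which rays can pass nearby).

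The first substantive step is to prove the exponential analogue of Proposition~\ref{Fundamental domains shrinking}: that $\leucl(I_t(g_\s))\to 0$ uniformly over bounded addresses as $t\to 0$. Here I cannot literally copy the polynomial argument, since the escaping set is not the complement of the filled Julia set; instead I would use the geometric estimates of Section~\ref{Bounds on fundamental domains for exponentials} (the subsection the excerpt announces but does not display) together with Remark~\ref{Bound on bounded addresses}: by \eqref{length} fundamental domains at a fixed potential have length controlled by $B(t,\|\s\|_\infty)\sim e^t-t$, which is small for small $t$, uniformly over addresses with $\|\s\|_\infty\leq M$. Combined with the functional equation $f(g_\s(t))=g_{\sigma\s}(F(t))$ and contraction of $f^{-1}$ along orbits staying in a bounded hyperbolic region (Schwarz lemma applied to $\C$ minus the postsingular set, or directly to a linearizing chart), this gives the required uniform shrinking.

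With that in hand, the landing statement is obtained by transcribing the proof of Theorem~\ref{Landing of a ray}: pick a ray $g_{\szero}$ with $g_{\szero}(t_0)$ inside a small neighbourhood $U$ of $\alpha$ for $t_0$ small, pull back by $\psi^n$ to get rays $\gn$ with $\sigma^n\sn=\szero$, build the curves $\gamma_n$ inductively, and verify properties (1)--(3) — in particular $I_{t_m}(\gn)\subset B(\alpha, C\diam U'/\mu^m)$ — using \eqref{Contraction}. Compactness is now the statement that bounded addresses with $\|\cdot\|_\infty\leq M$ form a compact set in $\SS$ (it is $\{-M,\dots,M\}^\N$), so $\{\sn\}$ has a convergent subsequence $\sn\to\s$; since all these addresses are bounded, $t_{\sn}=t_\s=0$, so Lemma~\ref{Convergence of rays} (transversal continuity) applies and Lemma~\ref{landing} yields that $g_\s$ lands at $\alpha$. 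Periodicity of the landing ray follows from Lemma~\ref{Rotation sets} applied to the shift on the limit set $\AA\subset\{-M,\dots,M\}^\N$ — surjectivity of $\sigma|_\AA$ by the same preimage-accumulation argument as in Proposition~\ref{Periodicity of landing ray}, and injectivity because $g_\s,g_{\s'}$ with $\sigma\s=\sigma\s'$ both land at $\alpha$ would, at small potential, give two distinct points in a neighbourhood $U$ on which $f$ is injective mapping to the same point. Finally, finiteness and common period follow from Lemma~\ref{One for all} in its exponential form, already proved in the excerpt.

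The main obstacle I expect is precisely the exponential replacement for Proposition~\ref{Fundamental domains shrinking}: in the polynomial case one exploits that $I(f)=\C\setminus K(f)$ is a hyperbolic domain whose boundary $J(f)$ is reached as $t\to 0$, so Lemma~\ref{Boundary behavior} applies cleanly. For exponentials the escaping set is a dense collection of hairs with empty interior, so there is no such domain, and one must instead control fundamental domains directly via the quantitative asymptotics of Theorem~\ref{Existence of dynamic rays} and the near-straightness estimate of Remark~\ref{Bound on bounded addresses}, uniformly over the compact family of bounded addresses — this is exactly what the preparatory estimates of Section~\ref{Bounds on fundamental domains for exponentials} are designed to supply. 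A secondary but essential point is ensuring that all the rays produced during the pullback construction have bounded address with a \emph{uniform} bound, so that one stays inside a compact subset of $\SS$ and transversal continuity (Lemma~\ref{Convergence of rays}) is applicable with $t_{\sn}\to t_\s$; this is where boundedness of the postsingular set, via Remark~\ref{Intersecting rays}, does the work.
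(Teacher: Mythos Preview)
Your overall architecture matches the paper's, and the periodicity and finiteness parts (via Lemma~\ref{Rotation sets} and Lemma~\ref{One for all}) are fine. But the two ingredients you yourself flag as the hard parts both have faulty proposed justifications.

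For the shrinking of fundamental domains, your reading of \eqref{length} is wrong: the asymptotic $B(t,\|\s\|_\infty)\sim e^t-t$ comes from estimate~(3) of Theorem~\ref{Existence of dynamic rays} and is valid only for \emph{large} $t$; it gives no information as $t\to 0$ (and $e^t-t\to 1$ there anyway). The paper works in the opposite direction: it first shows (Propositions~\ref{Bounded fundamental domains for exponentials} and~\ref{Branches of the logarithm}) that at some large $T$ every pullback of a fixed bounded-address ray $g_{\stilde}$ has fundamental domain of uniformly bounded Euclidean length lying in a right half-plane disjoint from $\PP(f)$, and only then combines the Schwarz Lemma on $\C\setminus\PP(f)$ with a normal-families argument (Propositions~\ref{Shrinking under inverse iterates} and~\ref{Fundamental domains shrinking for exponentials}) to obtain shrinking---and only for fundamental domains meeting a prescribed compact set, which suffices for the construction.

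For the uniform bound on the addresses $\sn$, your appeal to Remark~\ref{Intersecting rays} is circular: that remark constrains addresses of rays that \emph{land together}, but during the pullback construction you do not yet know that any $g_{\sn}$ lands. The paper's argument (Proposition~\ref{Families of bounded addresses}) instead exploits that $\sigma\sn=\s_{n-1}$ and that each $g_{\sn}(t_0)$ lies in the compact $\overline{U'}$: iterating \emph{forward} by $f^N$ one reaches potentials above $T$, where Property~(P1) of Proposition~\ref{Bounded fundamental domains for exponentials} lets one read the first entry of the address from the strip $S_j$ containing the point; boundedness of $f^N(\overline{U'})$ then bounds the first entry of every $\sigma^N\sn$, hence by forward invariance all entries of $\sigma^N\sn$, and a separate length estimate handles the first $N$ entries. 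Without this you have no compactness in address space and Lemma~\ref{Convergence of rays} cannot be invoked.
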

\begin{cor}
For Misiurewicz parameters,  the postsingular periodic orbit and hence the singular value are accessible.
\end{cor}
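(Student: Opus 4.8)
Recall that, by convention, $f$ is a \emph{Misiurewicz parameter} when the singular value $c$ is strictly preperiodic and the cycle it falls onto is repelling: there are integers $\ell\ge 1$ and $p\ge 1$ so that $z_\ell:=f^\ell(c)$ lies on a repelling cycle $O=\{z_\ell,f(z_\ell),\dots\}$ while $z_0,\dots,z_{\ell-1}$ (writing $z_j:=f^j(c)$, so $z_0=c$) are not periodic. In particular $\PP(f)=\{c,f(c),\dots,f^{\ell+p-1}(c)\}$ is finite, hence bounded, so \thmref{Non-recurrent parameters} applies to $f$. Since every point of $O$ is a repelling periodic point, \thmref{Non-recurrent parameters} yields at once that each point of the postsingular cycle is the landing point of at least one (and at most finitely many, all periodic of the same period) dynamic ray; this is the first assertion of the corollary.

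For the singular value itself the plan is to pull a landing ray back along the finite orbit segment $c=z_0\mapsto z_1\mapsto\cdots\mapsto z_\ell$. Fix a periodic ray $g^{(\ell)}$ landing at $z_\ell$, provided by the previous paragraph, and construct by downward induction on $j=\ell-1,\ell-2,\dots,0$ a dynamic ray $g^{(j)}$ landing at $z_j$, as follows. Since $f'(w)=e^w$ never vanishes, there is a local inverse branch $\phi_j$ of $f$ with $\phi_j(z_{j+1})=z_j$. The closure $\overline{g^{(j+1)}}=g^{(j+1)}\cup\{z_{j+1}\}$ avoids the omitted value $c$: its points other than $z_{j+1}$ are escaping whereas $c$ is not (the postsingular set is bounded), and $z_{j+1}\ne c$ because $c$ is strictly preperiodic and $j+1\ge 1$. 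As $f\colon\C\to\C\setminus\{c\}$ is a covering map, $\phi_j$ continues analytically along all of $\overline{g^{(j+1)}}$ starting from $z_{j+1}$, so $\Gamma_j:=\phi_j(\overline{g^{(j+1)}})$ is an injective curve with one endpoint $\phi_j(z_{j+1})=z_j$, whose other end runs to $\infty$ because $f$ carries $\phi_j(g^{(j+1)}(t))$ to $g^{(j+1)}(t)\to\infty$, forcing $\Re\bigl(\phi_j(g^{(j+1)}(t))\bigr)\to+\infty$. Every point of $\Gamma_j\setminus\{z_j\}$ is escaping, and for large potential its itinerary with respect to the strips $\{S_n\}$ is the address of $g^{(j+1)}$ with the index of the strip containing $z_j$ prepended; this prepended sequence is bounded, hence exponentially bounded and thus an admissible address, so the uniqueness part of \thmref{Existence of dynamic rays} identifies $\Gamma_j\setminus\{z_j\}$ with the dynamic ray $g^{(j)}$ of that address. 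Hence $g^{(j)}$ lands at $z_j$, and taking $j=0$ exhibits a dynamic ray landing at $c$. (The same construction incidentally shows that every $f^j(c)$ with $0\le j<\ell$ is accessible, so in fact the whole postsingular set is.)

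The substantive input here is entirely \thmref{Non-recurrent parameters}; given that theorem, the pull-back is routine, with only two points deserving care. First, one must know that the inverse branches $\phi_j$ extend all the way out to $\infty$ and up to the landing point — this is exactly where one uses that $f$ is a covering of $\C$ onto $\C\setminus\{c\}$ together with the fact that neither the rays (being escaping) nor their landing points (being $\ne c$, since $c$ is strictly preperiodic) ever meet the omitted value $c$. Second, one must check that the pulled-back curves are genuine dynamic rays and not mere escaping arcs; this follows from \thmref{Existence of dynamic rays}, once one observes that prepending finitely many integers to a bounded address leaves it bounded, hence admissible. Neither point is a real obstacle, so for this corollary the only genuine difficulty is \thmref{Non-recurrent parameters} itself.
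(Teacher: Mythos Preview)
Your proof is correct and follows the approach the paper implicitly intends: the corollary is stated without proof immediately after \thmref{Non-recurrent parameters}, with the word ``hence'' signaling exactly the pull-back argument you spell out. One minor quibble: the paper's convention is $\PP(f)=\overline{\bigcup_{n>0}f^n(c)}$, so for a strictly preperiodic $c$ the point $c$ itself is not in $\PP(f)$; this is harmless for your argument since the orbit is finite either way.
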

 For hyperbolic, parabolic and Misiurewicz parameters Theorem~\ref{Non-recurrent parameters}  has been previously proven in ~\cite{SZ2}. The next theorem is Theorem B stated for the exponential family, with the  additional property that the set of  rays landing on a hyperbolic set have uniformly bounded addresses.

\begin{thm}[Accessibility of hyperbolic sets]\label{Accessibility of hyperbolic sets exp}
Let $f$ be an exponential map with bounded postsingular set, 
and $\Lambda$ be a hyperbolic set. Then any point in $\Lambda$ is accessible; moreover, the dynamic rays landing at  $x\in \Lambda$ all have uniformly bounded addresses.
\end{thm}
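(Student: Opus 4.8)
The plan is to transplant the polynomial proofs of Theorems~\ref{Landing of a ray}--\ref{Periodicity of landing ray} and of Theorem~B to the exponential setting, using the exponential dynamic rays of Theorem~\ref{Existence of dynamic rays} in place of \Boettcher rays, Lemma~\ref{Convergence of rays} in place of Lemma~\ref{Angle-space continuity}, and the hyperbolic contraction on $\C\setminus\PP(f)$ in place of the Schwarz Lemma on $\C\setminus K(f)$. Since the postsingular set is bounded, $\Omega=\C\setminus\PP(f)$ is hyperbolic and $f:\Omega\to\Omega$ is a covering map (the singular value is not in $\Omega$), so $f$ locally preserves the hyperbolic metric; this is the structural replacement for the covering property $f:\C\setminus K(f)\to\C\setminus K(f)$. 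The first task is therefore to prove the exponential analogue of Proposition~\ref{Fundamental domains shrinking}: fundamental domains $I_t(g_\s)$ shrink in Euclidean length as $t\to 0$, \emph{uniformly over $\s$ with $\|\s\|_\infty$ bounded}. Here one starts from the length bound (\ref{length}) for $I_{t_*}(g_\s)$ at a fixed reference potential $t_*$ depending only on $M$, converts it to a hyperbolic length bound in $\Omega$, pulls back by inverse branches to bound $\ell_\Omega(I_t(g_\s))$ uniformly for $t\le t_*$, and then invokes Lemma~\ref{Boundary behavior} together with the fact that $g_\s(t)\to J(f)\supset\partial\Omega$ as $t\to 0$ (via the asymptotic estimate (3) of Theorem~\ref{Existence of dynamic rays}, which also shows that on a bounded-address class the rays stay in a bounded region, so the convergence to $\partial\Omega$ is uniform).

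With the shrinking-fundamental-domains lemma in hand, the construction of a landing ray at a repelling fixed point $\alpha$ proceeds exactly as in Theorem~\ref{Landing of a ray}: pick a linearizing neighborhood $L$, a smaller $U'\subset L$ and $U=\psi(U')$ for the inverse branch $\psi$ fixing $\alpha$, let $\eps=\dist(\partial U,\partial U')$ and $t_0$ small enough that all fundamental domains below $t_0$ are shorter than $\eps$; since $\alpha\in J(f)$ it is accessible by escaping points of arbitrarily small potential, giving an initial ray $g_{\szero}$ with $g_{\szero}(t_0)\in U$. One then sets $\gn$ to be the ray through $\psi^n(I_{t_0}(g_{\szero}))$ and proves inductively the three properties (well-definedness, $\gamma_n\subset U'$, and the estimate $I_{t_m}(\gn)\subset B(\alpha,C\diam U'/\mu^m)$) verbatim, using the functional equation (2) of Theorem~\ref{Existence of dynamic rays} and the linearization bound (\ref{Contraction}). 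A crucial point that must be checked here, and which has no polynomial analogue, is that the limiting addresses $\s$ of the $\sn$ really define rays: the $\sn$ all satisfy $\sigma^n\sn=\szero$, hence by Remark~\ref{Intersecting rays}-type arguments (or directly from the construction inside the bounded set $U'$) their entries are uniformly bounded, so $\|\sn\|_\infty\le M$, the $\sn$ lie in the compact space $\Sigma_D$, $D=2M+1$, a convergent subsequence exists, the minimal potentials all vanish so $t_{\sn}\to 0=t_\s$, and Lemma~\ref{Convergence of rays} applies; landing then follows from Lemma~\ref{landing}. Periodicity of the landing ray is obtained as in Proposition~\ref{Periodicity of landing ray}: the limit set $\AA$ of $\{\sn\}$ is closed, forward invariant, sits inside the compact locally expanding system $(\Sigma_D,\sigma)$, and $\sigma|_\AA$ is a homeomorphism (surjectivity by a diagonal/continuity argument, injectivity because $f$ is injective on the linearizing neighborhood and all these rays land at $\alpha$), so Lemma~\ref{Rotation sets} forces $\AA$ finite, hence every $\s\in\AA$ is periodic; Lemma~\ref{One for all} (already established for exponentials) then upgrades this to: finitely many rays land at $\alpha$, all periodic of the same period, proving Theorem~\ref{Non-recurrent parameters}.

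For Theorem~\ref{Accessibility of hyperbolic sets exp} one repeats the polynomial proof of Theorem~B: given a hyperbolic set $\Lambda$, pass to an iterate so that $|f'|>\eta>1$ on a $\delta$-neighborhood $U$ of $\Lambda$, and for a base point $x_0\in\Lambda$ with orbit $x_n=f^n(x_0)$ set $B_n=B_{\delta/\eta}(x_n)$, $B'_n=B_\delta(x_n)$, $\eps=\delta-\delta/\eta$. Choosing $t_0$ below the (bounded-address) shrinking threshold so that each $B_n$ meets potential $t_0$, one pulls fundamental domains back by the appropriate inverse branches $\psi^m$ to get $I_{t_m}(\psi_*^n g_{\sn})\subset B(x_0,\delta/\eta^m)$, and Lemma~\ref{landing} produces a landing ray $g_\a$ at $x_0$ for any accumulation point $\a$ of the pulled-back addresses. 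The uniform boundedness of the addresses is automatic from the construction: all the rays involved stay, at potential $t_0$, inside the bounded set $\bigcup_n B'_n$ (which is bounded since $\Lambda$, hence $\PP(f)\cup\Lambda$, is bounded), so by the asymptotic estimate (3) of Theorem~\ref{Existence of dynamic rays} their leading entries $s_0$ are uniformly bounded, and then applying the same reasoning to all forward images (which also stay in a bounded set, being contained in the $B'_n$) bounds every entry uniformly; this gives the extra conclusion of the theorem. I expect the main obstacle to be the uniform version of the shrinking-fundamental-domains estimate: one must carefully track that the constants $B(t,\|\s\|_\infty)$ in (\ref{length}) and the convergence $g_\s(t)\to\partial\Omega$ are genuinely uniform over the relevant bounded-address class and that the covering/Schwarz argument on $\C\setminus\PP(f)$ goes through despite $\PP(f)$ being merely bounded (not the whole complement structure of a polynomial), i.e. checking that the unbounded component of $\C\setminus\PP(f)$ still carries a hyperbolic metric with the needed boundary blow-up near $J(f)$ and that inverse branches of $f$ respect it.
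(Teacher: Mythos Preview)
Your proposal has a genuine gap in the exponential analogue of Proposition~\ref{Fundamental domains shrinking}. You plan to bound $\ell_\Omega(I_t(g_\s))$ uniformly via Schwarz and then invoke Lemma~\ref{Boundary behavior} ``together with the fact that $g_\s(t)\to J(f)\supset\partial\Omega$ as $t\to 0$''. But Lemma~\ref{Boundary behavior} requires the curves to approach $\partial\Omega$, and here $\Omega=\C\setminus\PP(f)$, so $\partial\Omega$ is the boundary of the \emph{postsingular set}, not the Julia set. The inclusion $\partial\Omega\subset J(f)$ goes the wrong way for your purpose: a ray piece $I_t(g_\s)$ lies in $J(f)$ for every $t$ (escaping points are in the Julia set), yet there is no reason whatsoever for it to approach $\PP(f)$ as $t\to 0$---a generic ray lands far from $\PP(f)$. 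Consequently, bounded hyperbolic length in $\Omega$ does \emph{not} force small Euclidean length: if, say, $\PP(f)$ is finite, $\rho_\Omega$ is bounded above and below on any compact set disjoint from $\PP(f)$, and your argument yields only a uniform \emph{bound}, not shrinking. Your parenthetical appeal to estimate~(3) of Theorem~\ref{Existence of dynamic rays} does not help either, since that estimate is valid only for large $t$.

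The paper explicitly flags this as the main difficulty and replaces the boundary-behavior argument by a two-part mechanism (Propositions~\ref{Branches of the logarithm}--\ref{Fundamental domains shrinking for exponentials}). First, by direct estimates on compositions of logarithm branches, one controls fundamental domains at a fixed large potential $T$ \emph{only for the family $\AA$ of pullbacks of one fixed ray $g_{\tilde\s}$ with bounded address}. Then shrinking is obtained relative to a compact set $K$: for $I_T(g_\a)$ lying far to the right one uses $\rho_\Omega(z)\to 0$ as $|z|\to\infty$ (equation~(\ref{hyperbolic limit})) to get small hyperbolic length and then Schwarz; for $I_T(g_\a)$ trapped in a disk one uses a normal-family argument (Proposition~\ref{Shrinking under inverse iterates}) to show preimages intersecting $K$ shrink in Euclidean diameter. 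Because this shrinking lemma is only available for pullbacks of a single ray and only relative to a compact $K$, the hyperbolic-set proof itself must be reorganized: one fixes a finite cover of $\Lambda$, picks a single disk $D$ visited by infinitely many iterates $x_n$, and always pulls back the \emph{same} fundamental domain $I_{t_0}(g_{\szero})\subset D$ along the branch sending $x_n\mapsto x_0$. Your scheme of picking a fresh ray in each $B_n$ would require the stronger uniform shrinking that is not available. The bounded-address conclusion is then handled by a separate lemma (Proposition~\ref{Families of bounded addresses}), again using that estimate~(3) applies only after pushing forward to potentials above $T$; your sketch of this part is roughly right in spirit but needs that extra step.
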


\begin{rem}
The family of rays constructed in Theorem~\ref{Accessibility of hyperbolic sets exp} form a lamination. Continuity of the family of rays on compact sets is a consequence of Lemma~\ref{Convergence of rays}. Continuity up to the endpoints follows from the estimates in (\ref{prelam}).
\end{rem}

We also prove that there are only finitely many rays landing at each $x\in\Lambda$.

\begin{prop}\label{Finiteness exp}
Let $f$ be an exponential map with bounded postsingular set, and $\Lambda$ be a hyperbolic set. Then there are only finitely many dynamic rays landing at each $x\in\Lambda$; more precisely there exists a constant $N_0$ such that for each $x\in\Lambda$ there are at most $N_0$ rays landing at $x$.
\end{prop}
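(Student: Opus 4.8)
The plan is to mimic the proof of Proposition~\ref{Finitely many rays}, replacing the role of the shift on $\Sigma_D$ by the shift on the set of (uniformly bounded) addresses and the role of Remark~\ref{Rotation symmetry} by Remark~\ref{Intersecting rays}. First I would invoke Theorem~\ref{Accessibility of hyperbolic sets exp}: every $x\in\Lambda$ is accessible, and the rays landing at points of $\Lambda$ all have addresses bounded by a single constant $M$, so all of them live in the compact space $\Sigma:=\{-M,\dots,M\}^{\N}$, on which the shift $\sigma$ is locally expanding by a factor (say) $2$ in a metric analogous to (\ref{D metric}). For $x\in\Lambda$ let $\AA_x\subset\Sigma$ be the (nonempty, by Theorem~\ref{Accessibility of hyperbolic sets exp}) set of addresses of rays landing at $x$. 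After replacing $f$ by an iterate we may assume $c\notin\Lambda$ (hyperbolic sets avoid the postsingular set, so this is automatic here), so near each $x\in\Lambda$ the map $f$ is a local homeomorphism and $\sigma$ restricts to a bijection $\AA_x\to\AA_{f(x)}$ with a well-defined inverse branch $\sigma^{-1}\colon\AA_{f(x)}\to\AA_x$.

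The heart of the argument, exactly as in Proposition~\ref{Finitely many rays}, is to show that these inverse branches are \emph{uniformly contracting}: there is $\eps>0$ such that for all $x\in\Lambda$ and all $\a,\a'\in\AA_{f(x)}$ with $|\a-\a'|_\Sigma<\eps$ one has $|\sigma^{-1}\a-\sigma^{-1}\a'|_\Sigma<\tfrac12|\a-\a'|_\Sigma$. Granting this, the finiteness follows by the usual covering argument: cover $\Sigma$ by $N_0$ balls of radius $\eps$; for a fixed $y_0\in\Lambda$ and $y_n:=f^n(y_0)$, each $\AA_{y_n}$ is covered by these $N_0$ balls, so pulling back by $\sigma^{-n}$ along $\Lambda$ shows $\AA_{y_0}$ is covered by $N_0$ sets of diameter $\eps/2^n$, whence $|\AA_{y_0}|\le N_0$; the constant $N_0$ depends only on $M$ and $\eps$, not on $y_0$. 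To prove the contraction I argue by contradiction: suppose there are $x_n\in\Lambda$ and $\a_n,\a_n'\in\AA_{f(x_n)}$ with $|\a_n-\a_n'|_\Sigma\to0$ but $|\sigma^{-1}\a_n-\sigma^{-1}\a_n'|_\Sigma\not\to0$. Since the preimages of a point under $\sigma$ are obtained by prepending distinct integers, writing $\sn:=\sigma^{-1}\a_n=s^{(n)}\,\a_n$ and $\sn':=\sigma^{-1}\a_n'=t^{(n)}\,\a_n'$ with $s^{(n)}\ne t^{(n)}$, the rays $g_{\sn}$ and $g_{\sn'}$ land together at $x_n$, so by Remark~\ref{Intersecting rays} $|s^{(n)}-t^{(n)}|=1$; hence (passing to a subsequence) $\sn$ and $\sn'$ have first entries differing by exactly $1$ and $|\sn-\sn'|_\Sigma\to 1/(2M+1)$ or a comparable fixed positive number. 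Applying the $2\pi i$-translation symmetry of the exponential dynamical plane, the rays with addresses $(s^{(n)}-1)\a_n'$ (a translate of $g_{\sn'}$) and, on the other side, $(s^{(n)}+1)\a_n$ land at the corresponding $2\pi i$-translates of $x_n$; as $n\to\infty$ the gap between $g_{\sn}$ and this translate of $g_{\sn'}$ tends to $0$, so in the limit a ray of address with first entry $s^{(n)}$ is "squeezed" between two rays coming from $2\pi i$-translates, and — using transversal continuity (Lemma~\ref{Convergence of rays}) together with the asymptotic straightness estimates of Theorem~\ref{Existence of dynamic rays} and Remark~\ref{Bound on bounded addresses} to control limits of these uniformly bounded rays — one extracts a limit point $z\in\Lambda$ that is trapped in a region bounded by two rays landing at a common $2\pi i$-translate configuration, contradicting accessibility of $z$ (Theorem~\ref{Accessibility of hyperbolic sets exp}) in the same way as in Proposition~\ref{Finitely many rays}.

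The main obstacle I anticipate is the last step: in the polynomial case the clean $D$-fold rotational symmetry produces $D$ symmetric pairs of rays cutting $\C$ into $D+1$ pieces and the critical point sits in a distinguished piece, which drives the contradiction cleanly. In the exponential case the only symmetry is the infinite-order translation $z\mapsto z+2\pi i$, which is not compact, so I must be careful that the limiting configuration genuinely separates a point of $\Lambda$ from the singular value (or from all accessible points): one must check that the finitely many relevant translates, together with transversal continuity and the minimal-potential control $t_{\sn}\to t_\s$ (automatic here since all addresses are bounded, so all minimal potentials are $0$, cf. the remark after the definition of $t_\s$), really do bound a region $Q$ with $Q\cap\Lambda\ne\emptyset$ and with the singular value on the "wrong" side. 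Making this topological separation argument rigorous — in particular verifying that the limiting rays land and that the region they bound behaves as in Figure~\ref{C} — is where the real work lies; everything else is a routine transcription of the polynomial proof with $\Sigma_D$ replaced by a symbol space of bounded addresses.
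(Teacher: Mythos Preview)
Your overall plan matches the paper's proof closely: the reduction to a compact shift space via Theorem~\ref{Accessibility of hyperbolic sets exp}, the covering argument assuming uniform contraction (\ref{contraction}), and the contradiction setup using Remark~\ref{Intersecting rays} and the $2\pi i$-symmetry are all exactly as in the paper. You have also correctly located the only nontrivial step.

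However, your sketch of how to close the contradiction is missing the decisive idea, and the version you describe (``singular value on the wrong side'', separation by finitely many translates) does not work as stated. The paper's argument runs differently. The pairs of rays $g_{j\a_n}$, $g_{j\a_n'}$ land together at the $2\pi i$-translates of $x_n$ for every $j\in\Z$, and together these infinitely many pairs bound a single region $V_n$ that contains an entire left half plane. Setting $Q=\bigcap\overline{V_n}$, compactness of $\Lambda$ gives $Q\cap\Lambda\ne\emptyset$, and the only rays that can enter $Q$ are the limiting rays $g_{j\a}$, $j\in\Z$. Now comes the ingredient you did not invoke: by Corollary~\ref{Accessibility of postsingular set} (itself a consequence of Theorem~\ref{Accessibility of hyperbolic sets exp} applied to the postsingular set), the singular value $c$ is accessible by some ray $g_{\tilde\s}$. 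Its countably many preimage rays $g_{j\tilde\s}$ tend to $-\infty$ (since $c$ is the omitted value of $f$), hence meet every left half plane, hence lie in every $V_n$, hence in $Q$. This forces $\tilde\s=\a$, so the rays $g_{j\a}$ are precisely these non-landing preimages. Therefore no ray can land at the point of $Q\cap\Lambda$, contradicting Theorem~\ref{Accessibility of hyperbolic sets exp}.

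So the missing piece is not a separation of $c$ from $\Lambda$, but the use of accessibility of $c$ together with the fact that preimages of the ray landing at $c$ sweep through the left half plane; this is what identifies the limiting address and shows the limiting rays fail to land. Without this step there is no contradiction, since a priori the rays $g_{j\a}$ might land and one of them might land at your limit point in $\Lambda$.
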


In the case in which the postsingular set is bounded and contained in the Julia set, Rempe and van Strien (\cite{RvS}, Theorem 1.2) have shown that it is hyperbolic (see also \cite{MS}, Theorem 3, for a different perspective). Together with Theorem~\ref{Accessibility of hyperbolic sets exp}, this implies accessibility of the postsingular set.

\begin{cor}\label{Accessibility of postsingular set}
Let $f$ be an exponential map with bounded postsingular set.  Then any point in the postsingular set is accessible.
\end{cor}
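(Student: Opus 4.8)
The plan is to read off the statement from Theorem~\ref{Accessibility of hyperbolic sets exp}, once one knows that the postsingular set is a hyperbolic set. First I would record that $\PP(f)$ is a forward invariant compact set: it is bounded by hypothesis and closed by definition, hence compact, and $f(\PP(f))=f\bigl(\ov{\underset{n>0}\bigcup f^n(c)}\bigr)\subseteq\ov{\underset{n>0}\bigcup f^{n+1}(c)}\subseteq\PP(f)$ by continuity of $f$. Since $\PP(f)$ is bounded and contained in the Julia set --- necessarily so if it is to consist of accessible points, dynamic rays landing only at points of $J(f)$ --- \cite{RvS}, Theorem~1.2, applies and yields that $\PP(f)$ is a hyperbolic set in the sense of the definition preceding Theorem~\ref{Accessibility of hyperbolic sets exp}.

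With this in hand the corollary is immediate: Theorem~\ref{Accessibility of hyperbolic sets exp} applied with $\Lambda:=\PP(f)$ shows that every point of $\PP(f)$ is the landing point of at least one dynamic ray (in fact of only finitely many, by Proposition~\ref{Finiteness exp}). To also recover accessibility of the singular value $c$ itself --- which is not a point of $\PP(f)$, as $c$ is non-recurrent --- I would pull back: $f(c)\in\PP(f)$, so by the above some dynamic ray $g_\s$ lands at $f(c)$; since $f$ restricts to a universal covering $\C\to\C\setminus\{c\}$ and $c$ lies over $f(c)$, the local inverse branch of $f$ carrying a neighbourhood of $f(c)$ to a neighbourhood of $c$ sends $g_\s$ to a dynamic ray $g_{j\s}$, obtained by prepending to $\s$ the appropriate symbol $j\in\Z$, and by the functional equation in part~(2) of Theorem~\ref{Existence of dynamic rays} this ray lands at $c$.

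There is no genuine obstacle: the statement merely packages Theorem~\ref{Accessibility of hyperbolic sets exp} with the hyperbolicity theorem of \cite{RvS}. The only step needing a little care is the last pull-back, where one must check that $j\s$ is again an admissible address --- which it is, since $\s$ is bounded (all rays landing on the hyperbolic set $\PP(f)$ have uniformly bounded addresses, by Theorem~\ref{Accessibility of hyperbolic sets exp}) and prepending a single symbol preserves exponential boundedness --- so that $g_{j\s}$ is a genuine dynamic ray.
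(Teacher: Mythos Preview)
Your approach matches the paper's: the sentence immediately preceding the corollary is the whole proof --- when $\PP(f)$ is bounded and contained in $J(f)$, \cite{RvS}, Theorem~1.2, shows $\PP(f)$ is hyperbolic, and Theorem~\ref{Accessibility of hyperbolic sets exp} then gives accessibility.

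One point to flag: your justification that $\PP(f)\subset J(f)$ is circular --- ``necessarily so if it is to consist of accessible points'' assumes the very conclusion you are proving. The paper does not prove this containment either; it simply inserts it as a hypothesis in the preceding sentence. The underlying reason (cf.\ the proof of Proposition~\ref{Connected Omega}) is that bounded $\PP(f)$ excludes Siegel disks, so either $J(f)=\C$ and the containment is automatic, or $f$ is hyperbolic/parabolic and $\PP(f)$ meets the Fatou set --- in which case the corollary actually fails as literally stated, since points of $\FF(f)$ cannot be landing points of rays. So strictly speaking the hypothesis $\PP(f)\subset J(f)$ is needed, and your parenthetical does not supply it.

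Your pull-back argument for $c$ itself is correct and supplies exactly what is needed for Corollary~C in the introduction, though the corollary labelled here only speaks of points in $\PP(f)$.
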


We will prove Theorem~\ref{Non-recurrent parameters}, Theorem~\ref{Accessibility of hyperbolic sets exp} and Proposition~\ref{Finiteness exp} in Section~\ref{Proof of accessibility theorems}. 
Like in the polynomial case, the strategy is  to first prove a uniform bound on the length of fundamental domains $I_T(g_\a)$ for some fixed $T$ and some specific family of addresses, then to translate this into a uniform shrinking for fundamental domains $I_t(g_\a)$ as $t\ra 0$, and finally to study the local dynamics near a repelling periodic orbit. 
The main difficulties compared to the polynomial case are to find an   analogue of Proposition~\ref{Fundamental domains shrinking}, and to show that the dynamic  rays obtained by pullbacks near the repelling fixed point   have uniformly bounded addresses.

In the following, let $\PP(f)$ be the postsingular set, and $\Omega:=\C\setminus  \PP(f)$.
As $\PP(f)$ is  forward invariant, $\Omega$ is backward invariant, i.e. $f^{-1}(\Omega)\subset \Omega$.

\begin{prop}\label{Connected Omega}
If $\PP(f)$ is bounded, $\Omega$ is connected.
\end{prop}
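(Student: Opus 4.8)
The plan is to show that $\Omega = \C \setminus \PP(f)$ is connected by proving that its complement $\PP(f)$, which is a bounded closed set, is \emph{full}, i.e. that $\PP(f)$ has no complementary bounded components — or more precisely, that $\C \setminus \PP(f)$ has exactly one component (necessarily the unbounded one). Suppose for contradiction that $U$ is a bounded component of $\Omega$. Since $\Omega$ is backward invariant ($f^{-1}(\Omega) \subset \Omega$, as $\PP(f)$ is forward invariant), each branch of $f^{-1}$ sends $U$ into $\Omega$, hence into a single component of $\Omega$ since $U$ is connected. The key point will be to use the fact that $f(z) = e^z + c$ is surjective onto $\C \setminus \{c\}$, together with the branch structure of $f^{-1}$ given by the $\{L_n\}$ in Section~\ref{Dynamic rays in the exponential family}, to derive a contradiction with boundedness.

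More concretely, I would argue as follows. First observe $c \in \PP(f)$ (it is a limit/accumulation point structure — actually $c \in \overline{\bigcup_{n>0} f^n(c)}$ need not hold, but $f(c) = e^c + c \in \PP(f)$ and one works with $f(c)$; in any case the unbounded component $U_\infty$ of $\Omega$ is well-defined and nonempty since $\PP(f)$ is bounded). Let $U$ be any component of $\Omega$. Since $f: \C \to \C \setminus \{c\}$ is a covering map onto its image and $\Omega \subset \C \setminus \{c\}$ (as $c \in \PP(f)$), and since $\Omega$ is backward invariant, $f^{-1}(U)$ is an open subset of $\Omega$; each of its components maps onto $U$ as a covering, hence (since the strips $S_n$ are simply connected and $f|_{S_n}$ is a biholomorphism onto $\C \setminus R$) each component of $f^{-1}(\Omega \setminus R)$ is mapped biholomorphically. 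The decisive observation: a biholomorphic branch $L_n$ of $f^{-1}$ maps the \emph{unbounded} set $\Omega \setminus R$ into the bounded-width strip $S_n$, and $L_n(w) = \log|w-c| + i\arg(w-c) + 2\pi i n$ has real part $\log|w - c|$, which is unbounded above on the unbounded component. So $L_n$ applied to the unbounded component $U_\infty$ produces a set whose real part is unbounded above but whose imaginary part is confined to $(2\pi n - \pi, 2\pi n + \pi)$; such a "horizontal half-strip going to $+\infty$" together with the original $U_\infty$ shows $U_\infty$ is the only component, because any bounded component $U$ would have $L_n(U) \subset S_n$ bounded, but then its image would have to reconnect to the global picture — here one uses that $U_\infty$ already contains, for $\Re z \gg 0$, all of a right half-plane intersected with each $S_n$ (by Theorem~\ref{Existence of dynamic rays}, the dynamic rays fill a neighborhood of $+\infty$ in every strip, and these escaping points lie in $J(f)$ hence... no — rather, escaping points are \emph{not} in $\PP(f)$ since $\PP(f)$ is bounded, so the right half-plane truncated to large real part lies in $\Omega$ and is connected, hence in $U_\infty$).

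So the cleanest route: let $H = \{z : \Re z > M\}$ for $M$ large enough that $H \cap \PP(f) = \emptyset$ (possible since $\PP(f)$ is bounded); then $H \subset \Omega$ and $H$ is connected, so $H \subset U_\infty$. Now take any $z \in \Omega$; I claim $z$ can be connected to $H$ within $\Omega$. If not, let $U \ni z$ be its component, bounded. Pick $n$ with $U \cap R = \emptyset$ (the line $R$ is a single horizontal ray, so after possibly moving within $U$... actually $R$ might cut through $U$; instead use that $\Omega \setminus R$ still may be disconnected but each piece maps nicely). The forward image $f(U)$: since $f$ is an open map and $\Omega$ backward invariant doesn't immediately give $f(U) \subset \Omega$, but $f(\C) = \C\setminus\{c\}$ so $f(U)$ is an open connected set; iterate forward — $f^k(U)$ stays... hmm, we need $f(\Omega) \subset \Omega$? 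No: $\PP(f)$ forward invariant gives $f(\PP) \subset \PP$, which gives $f^{-1}(\Omega) \subset \Omega$, NOT $f(\Omega)\subset\Omega$. So forward iteration is the wrong direction.

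The right move, therefore, is backward iteration combined with the half-plane. Since $H \subset \Omega$ and $H \subset U_\infty$, and since for suitable $n$ the branch $L_n$ maps $\Omega$ (minus $R$) into $S_n$ and is injective with image containing a right half-strip that overlaps $H$, we get that $L_n(U_\infty \setminus R)$ is connected, contained in $\Omega$, and contains points of arbitrarily large real part inside $S_n$ — i.e. it meets $H$, hence $L_n(U_\infty \setminus R) \subset U_\infty$. Iterating, $U_\infty$ is invariant under all compositions of the $L_n$'s (avoiding preimages of $R$), and the union of all such backward images of $H$ is dense in $J(f)$... and actually equals all of $\Omega$ because: given $z \in \Omega$, the orbit $f^k(z)$ — if it ever has large real part we are in $H \subset U_\infty$ after pulling back; if it stays in a bounded-real-part region forever, then since $c\notin\PP(f)$ and $\Omega$ is backward invariant, $z$ lies in the backward orbit of any point of $H$ under the full inverse, because $f$ restricted to $\{ \Re z \le M\}$... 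This is the step I expect to be the main obstacle: showing every point of $\Omega$ connects to the half-plane $H$ inside $\Omega$. I would handle it by the standard argument that $f: \Omega \to \Omega'$ (for appropriate $\Omega' \supset \Omega$) is a covering and pulling back a path from $f(z)$ to a point of $H$; the obstruction to lifting is monodromy around $\PP(f)$, and one shows this monodromy is trivial because $\PP(f)$ is connected — or rather, because we can always choose the lift to land in the correct strip $S_n$ and the strips are simply connected, so no monodromy obstruction arises. Concretely: connect $f(z)$ to a point $w_0 \in H$ by a path $\gamma$ in $\C \setminus \{c\}$ avoiding $R$ except possibly at finitely many transverse crossings, handled by small detours staying in $\Omega$ (possible since $R \setminus \PP(f)$... wait $R \cap \PP(f)$ could be nonempty); then lift $\gamma$ under the appropriate branch $L_{s_0}$ where $s_0$ is the first entry of an address near $z$, landing a path in $S_{s_0} \subset \Omega$ from $z$ to a point of $L_{s_0}(H)$, which lies in $H$ provided $w_0$ was chosen with $\Re w_0$ huge. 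This realizes $z$ in the same component as $H$, so $\Omega = U_\infty$ is connected.

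\begin{rem}
In the write-up I would streamline the above to: (i) fix $M$ with $\{\Re z > M\} \subset \Omega$; (ii) show this half-plane lies in one component $U_\infty$; (iii) for arbitrary $z \in \Omega$, use surjectivity of $f$ onto $\C\setminus\{c\}$ and the explicit branches $L_n$ to pull back a path from $f(z)$ (or a high iterate's worth, if needed) to the half-plane, staying in $\Omega$ by backward invariance, thereby connecting $z$ to $U_\infty$. The only real content is the path-avoidance of $R$ and of $\PP(f)$, for which boundedness of $\PP(f)$ and the one-dimensionality of $R$ give enough room.
\end{rem}
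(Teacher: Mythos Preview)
Your approach is genuinely different from the paper's, and it has a real gap that you do not close. The core of your argument is: given $z\in\Omega$, lift a path $\gamma$ from $f(z)$ to a point of the half-plane $H$ under an inverse branch $L_n$, obtaining a path from $z$ to $H$ inside $\Omega$. But for the lift to lie in $\Omega$ you need $\gamma\subset\Omega$, since backward invariance only gives $f^{-1}(\Omega)\subset\Omega$. You construct $\gamma$ merely in $\C\setminus\{c\}$ (or avoiding $R$); the assertion ``$S_{s_0}\subset\Omega$'' is false in general, so the lift need not avoid $\PP(f)$. Producing $\gamma$ from $f(z)$ to $H$ \emph{inside $\Omega$} is exactly the connectivity statement one step forward, so the argument is circular. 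Your attempted escape via forward iteration fails for the same structural reason: $\Omega$ is not forward invariant, so you cannot assume $f^k(z)\in\Omega$. The ``room'' afforded by one-dimensionality of $R$ is irrelevant---the obstruction is $\PP(f)$, which may be a complicated compact set, not $R$ or $\{c\}$. (A small side remark: under the paper's standing hypotheses one has $c\notin\PP(f)$, contrary to what you assume at one point.)

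The paper's proof is much shorter and uses a different mechanism. Bounded $\PP(f)$ rules out Siegel disks, so either $f$ is hyperbolic/parabolic---in which case $\PP(f)$ is a countable compact set, hence totally disconnected, and connectedness of $\Omega$ is immediate---or $J(f)=\C$. In the latter case every component $V$ of $\Omega$ is open and therefore contains an escaping point (escaping points are dense in $J(f)=\C$); that point lies on a dynamic ray, an unbounded connected curve of escaping points, which is entirely contained in $\Omega$ since escaping points cannot lie in the bounded set $\PP(f)$. Hence every component of $\Omega$ is unbounded, while a bounded $\PP(f)$ allows only one unbounded complementary component. You actually brushed against this idea when you noted that rays lie in $\Omega$; what you were missing is the use of $J(f)=\C$ (via the case split) to place such a ray in \emph{every} component.
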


\begin{proof}
As $\PP(f)$ is bounded, there are no Siegel disks, hence either $J(f)=\C$ or $f$ is parabolic or hyperbolic.
In the last two cases $\PP(f)$ is a totally disconnected set and the claim follows. If $J(f)=\C$, consider the connected components $V_i$ of  $\Omega$; as $\PP(f)$ is bounded, there is only one unbounded $V_i$.  On the other side, by density of  escaping points, each $V_i$ contains escaping points; as dynamic rays are connected sets, each $V_i$ has to be unbounded, hence there is a unique connected component.
\end{proof}

As $\Omega$ is connected, and omits at least three points because $c$ cannot be a fixed point, it admits a well defined hyperbolic density $\rho_\Omega$. 
As $\PP(f)$ is bounded, we have

\begin{equation}\underset{|z|\ra\infty}\lim\frac{\rhohyp(z)}{\rhoeucl(z)}=0. \label{hyperbolic limit}\end{equation}


\noindent{To see this, observe that, since $\PP(f)$ is bounded,  $\rhohyp$ is bounded from above by the hyperbolic density of the set $\Omega':=\C\setminus \D_R$, for some sufficiently large $R$. The latter density $\rho_{\Omega'}$ can be computed directly by using that $\Omega'$ is the image under the exponential map of the right half plane $\{z\in\C: \Re z> \log R\}$. This yields $\rhohyp\leq \frac{1}{|z|}$ as $z\ra\infty$ giving (\ref{hyperbolic limit}).} 



\subsection{Bounds on fundamental domains for exponentials}\label{Bounds on fundamental domains for exponentials}

In this section we prove a uniform bound on the length of fundamental domains for an appropriate family of dynamic rays (Proposition~\ref{Fundamental domains shrinking for exponentials}). 
Observe first that since $\PP(f)$ is bounded, by the asymptotic estimates in Theorem~\ref{Existence of dynamic rays} for any ray $g_\s$ there exists $T$ sufficiently large such that all inverse branches of $f^n$ are well defined and univalent in a neighborhood of $g_\s(T,\infty)$.
 
\begin{prop}[Bounded fundamental domains for exponentials]\label{Bounded fundamental domains for exponentials}
Let $\gst$ be a  dynamic ray with bounded address  and let $C$ be a {sufficiently large}  positive constant. Let
 \[\AA:=\{\a\in\SS: \sigma^m(\a)=\stilde \text{ for some } m\geq0\}, \]
 and let $\{g_\a\}_{\a\in \AA}$ be the collection of pullbacks of $\gst$. Then there exists $T$ such that for all $t>T, \a\in \AA$:             
\begin{itemize}
\item[(P1)] If $\a=a_m ...a_2 a_1\stilde,\text{ then } g_\a(t)=L_{a_m}\circ\dots\circ L_{a_1}\gst (F^m(t))$;        
\item[(P2)] $\Re g_\a(t)>C$;           
\item[(P3)] $\leucl(I_t(g_\a))\leq B(t)$, with $B(t)$ independent of $\a$. 
\end{itemize}
\end{prop}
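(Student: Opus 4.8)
The plan is to set up the pullback of $\gst$ along the countably many branches $L_{a_m}\circ\dots\circ L_{a_1}$ indexed by $\AA$, and then control the geometry of these pulled-back rays near infinity by combining the asymptotic estimates of \thmref{Existence of dynamic rays} with the fact that $\PP(f)$ is bounded, so that all the relevant inverse branches are well defined and univalent on a neighborhood of the tail of $\gst$. First I would establish (P1): by the functional equation $f(g_\a(t))=g_{\sigma\a}(F(t))$ from \thmref{Existence of dynamic rays}(2), for $\a=a_m\dots a_1\stilde$ the curve $g_\a$ agrees, on the range of potentials where the address genuinely realizes the itinerary, with the branch $L_{a_m}\circ\dots\circ L_{a_1}$ applied to $\gst(F^m(\cdot))$; choosing $T$ large enough (using \remref{Bound on bounded addresses}, since $\stilde$ is bounded, property $(1)$ holds for $t>2\log(K+3)$, and the constants are uniform) makes this identification valid for every $\a\in\AA$ and every $t>T$ simultaneously, because $F^m(t)>t>T$ and applying $L_n$ only moves the curve into the strip $S_n$ without affecting the real part by much.

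Next I would prove (P2). The key point is that each $L_n(w)=\log|w-c|+i\arg(w-c)+2\pi i n$ has real part $\log|w-c|$, which is large precisely when $|w-c|$ is large. Since $\PP(f)$ is bounded and $\gst$ is a dynamic ray with bounded address, by \thmref{Existence of dynamic rays}(3) the tail $\gst(F^m(t))$ has real part tending to $+\infty$ as the potential grows; in particular, after choosing $T$ large the point $\gst(F^m(t))$ lies far to the right, and then $\Re L_{a_1}\gst(F^m(t))=\log|\gst(F^m(t))-c|$ is already bounded below by a large constant. Applying the remaining branches $L_{a_2},\dots,L_{a_m}$ only increases the modulus (a point with large real part has modulus at least as large), so $\Re g_\a(t)=\log|\,\cdot\,|$ stays above any prescribed $C$, uniformly in $\a$; enlarging $T$ if necessary gives (P2).

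For (P3) I would use that $|(L_n)'(w)|=1/|w-c|$, so each inverse branch is uniformly contracting once we are in the region $\{\Re w>C\}$ with $C$ large (there $|w-c|$ is bounded below). By (P2) the whole orbit $\gst(F^m(t)),\ L_{a_1}\gst(F^m(t)),\dots$ stays in $\{\Re w>C\}$, so the composition $L_{a_m}\circ\dots\circ L_{a_1}$ contracts Euclidean length there by a factor that is at most $1$, actually bounded by $(1/(C-|c|))^{m}$ or so. Hence $\leucl(I_t(g_\a))=\leucl\big(L_{a_m}\circ\dots\circ L_{a_1}(I_{F^m(t)}(\gst))\big)$ is no larger than $\leucl(I_{F^m(t)}(\gst))$, and by the length estimate $(\ref{length})$ in \remref{Bound on bounded addresses} applied to the bounded address $\stilde$ we have $\leucl(I_{F^m(t)}(\gst))\leq B'(F^m(t),\|\stilde\|_\infty)$. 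Since $F^m(t)\ge t>T$ and $B'(\cdot,\|\stilde\|_\infty)$ is controlled for potentials bounded below, this yields a bound $B(t)$ depending only on $t$ (and on the fixed ray $\gst$), not on $\a$, which is exactly (P3). The main obstacle I anticipate is bookkeeping the uniformity: one must choose the single threshold $T$ so that the identification in (P1), the lower bound in (P2), and the contraction estimate behind (P3) all hold for every branch length $m$ at once — this requires that applying more branches $L_n$ never pushes points back out of the good region $\{\Re w>C\}$, which is precisely what (P2) guarantees and why (P2) must be proved before (P3).
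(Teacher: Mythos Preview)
Your argument for (P1) is essentially fine and matches the paper's, but the arguments for (P2) and (P3) contain a real gap.

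For (P2), you write that ``applying the remaining branches $L_{a_2},\dots,L_{a_m}$ only increases the modulus \dots\ so $\Re g_\a(t)=\log|\cdot|$ stays above any prescribed $C$''. This is false as stated: $\Re L_n(w)=\log|w-c|$, so applying a branch to a point with $\Re w>C$ gives a point with real part roughly $\log C$, which is \emph{much smaller} than $C$. The only reason the real part stays above $C$ after $m$ applications is that you started at $\gst(F^m(t))$, whose real part is of order $F^m(t)$; after $m$ logarithms this comes down to order $t$, not to order $C$ independent of $m$. Making this precise is exactly the content of the paper's Proposition on branches of the logarithm: one compares the arbitrary pullback $L_{a_m}\circ\dots\circ L_{a_1}(z)$ with the special pullback $L_0^m(z)$, and uses that $L_0^m(\gst(F^m(t)))=g_{0^m\stilde}(t)$ is a point on a ray with \emph{uniformly bounded} address $0^m\stilde$, so its real part is close to $t$ by the asymptotics (3) in Theorem~\ref{Existence of dynamic rays}. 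The inductive comparison $(1_m)\Leftrightarrow(2_m)$ is needed precisely because each application of $L$ shrinks the real part drastically.

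For (P3), your estimate $\leucl(I_t(g_\a))\leq \leucl(I_{F^m(t)}(\gst))$ is useless: by (\ref{length}) the right-hand side is of order $e^{F^m(t)}$, which depends on $m$ and hence on $\a$, so you do not get a bound $B(t)$ independent of $\a$. The crude contraction factor $(C-|c|)^{-m}$ you mention cannot beat $e^{F^m(t)}$. The paper instead proves the derivative comparison $|(L_{a_m}\circ\dots\circ L_{a_1})'(z)|\leq e^{\epsilon C'}|(L_0^m)'(z)|$ with $C'$ independent of $m$; this transfers the uniform length bound for $I_t(g_{0^m\stilde})$ (which holds because $\|0^m\stilde\|_\infty=\|\stilde\|_\infty$ for all $m$) to all $I_t(g_\a)$. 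The missing idea in your proposal, in both (P2) and (P3), is this comparison with the distinguished family $\{g_{0^m\stilde}\}_{m\geq 0}$ of pullbacks along the principal branch.
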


For any positive constant $C$ let us denote by $\H_C$ the right half plane \[ \H_C:=\{z\in\C:\Re z>C\}.\]
Proposition~\ref{Bounded fundamental domains for exponentials} is a consequence of  the following proposition.

\begin{prop}[Branches of the logarithm]\label{Branches of the logarithm}
  Let $\gst$ be a dynamic ray with bounded address and fix  a small $\epsilon>0$. {Let $C>\max(2,
\Re c+4, \frac{8\pi^2}{\epsilon})$, and such that $\PP(f)\cap \H_C=\emptyset$.}  Then there exists $T>0$ such that for any $m>0$, for any $z=\gst(t)$ with  $t>F^m(T)$ and for any finite sequence $a_m\ldots a_1$  we have the following properties:
\begin{enumerate}
\item[$(1_m)$] $\Re L_{a_m}\circ\dots\circ L_{a_1}(z)-\Re c\geq \Re L^m_0(z)-\Re c-\epsilon/2> C$
\item[$(2_m)$] $|L_{a_m}\circ\dots\circ L_{a_1}(z)-c|\geq|L^m_0(z)-c|-\epsilon$.
\end{enumerate} 
Moreover, for some $C'>0$
\begin{equation}\label{Derivative}|(L_{a_m}\circ\dots\circ L_{a_1})'(z)|\leq e^{\epsilon C'}|(L^m_0)'(z)|.\end{equation}
\end{prop}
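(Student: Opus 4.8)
\textbf{Proof plan for Proposition~\ref{Branches of the logarithm}.}
The plan is to prove the two statements $(1_m)$ and $(2_m)$ simultaneously by induction on $m$, with the derivative estimate \eqref{Derivative} following as a byproduct once $(2_m)$ is in place. First I would record the basic contraction estimate for the branches $L_n$: since $|(L_n)'(w)| = 1/|w-c|$, and since points $z = \gst(t)$ with $t$ large have $\Re z$ large (by property $(1)$ of Theorem~\ref{Existence of dynamic rays} and boundedness of $\stilde$), one gets $|L_n(w) - c| \asymp \log|w-c|$ with the imaginary part controlled by $\pm\pi + 2\pi n$; in particular, all the branches $L_{a_m}\circ\cdots\circ L_{a_1}$ move points to the right into $\H_C$, where all inverse branches of all iterates are univalent because $\PP(f)\cap\H_C=\emptyset$. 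The key geometric input is that $\Re L_n(w)$ does not depend on $n$ at all: $\Re L_n(w) = \log|w-c|$, so the ``worst case'' for the real part is controlled uniformly, and this is what makes it possible to compare $L_{a_m}\circ\cdots\circ L_{a_1}$ against the principal branch composition $L_0^m$.

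For the inductive step I would argue as follows. Suppose $(1_{m-1})$ and $(2_{m-1})$ hold for all points $\gst(t)$ with $t > F^{m-1}(T)$, and take $z = \gst(t)$ with $t > F^m(T)$; then $w := L_{a_1}(z)$ and $w_0 := L_0(z)$ both have real part $= \log|z-c|$, hence the same real part, while their imaginary parts differ by $2\pi a_1$. Now I want to apply the inductive hypothesis to the remaining composition of $m-1$ branches. The subtlety is that $w$ is no longer literally of the form $\gst(t')$, so I would instead set up the induction as a statement about arbitrary points in a suitable region (a horizontal strip or half-plane shifted appropriately), not just points on $\gst$; this is the natural reformulation. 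The real-part estimate $(1_m)$ then follows because $\Re L_{a_m}(\cdot) = \log|\cdot - c|$ is monotone in $|\cdot - c|$, and the loss $\epsilon/2$ is absorbed geometrically: the function $\log$ contracts differences so aggressively near $\H_C$ (with $C$ chosen $> 8\pi^2/\epsilon$) that the accumulated discrepancy between the $a_j$-branch orbit and the $0$-branch orbit stays below $\epsilon/2$ in real part and $\epsilon$ in modulus. Concretely, if at stage $m-1$ the two points differ by at most $\epsilon$, then after applying $L_{a_m}$ versus $L_0$ the real parts differ by at most $\log\bigl(1 + \epsilon/(|L_0^{m-1}(z)-c|)\bigr) \le \epsilon/(C) \le$ something much smaller, and one sums a geometric-type series; the choice $C > 8\pi^2/\epsilon$ guarantees the imaginary-part contributions ($\le 2\pi$ each, divided by a real part $\ge C$ after one more $\log$) also telescope to something $< \epsilon$.

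For the derivative bound \eqref{Derivative}, I would use the chain rule: $|(L_{a_m}\circ\cdots\circ L_{a_1})'(z)| = \prod_{j=1}^{m} |L_{a_j}'(w_{j-1})| = \prod_{j=1}^m 1/|w_{j-1}-c|$ where $w_{j-1}$ is the $j$-th partial orbit point, and similarly $|(L_0^m)'(z)| = \prod_{j=1}^m 1/|(L_0^{j-1}(z))-c|$. Taking the ratio and using $(2_j)$ at each level, $|w_{j-1}-c| \ge |L_0^{j-1}(z)-c| - \epsilon \ge (1-\epsilon/|L_0^{j-1}(z)-c|)|L_0^{j-1}(z)-c|$, so the ratio of derivatives is at most $\prod_{j} (1-\epsilon/|L_0^{j-1}(z)-c|)^{-1} \le \exp\bigl(\sum_j 2\epsilon/|L_0^{j-1}(z)-c|\bigr)$, and since the $|L_0^{j-1}(z)-c|$ grow at least geometrically (each application of $L_0$ lands in $\H_C$ and then the next is $\log$ of something already large — actually they grow, then stabilize near a bounded orbit, but stay $\ge C$), the sum $\sum_j 1/|L_0^{j-1}(z)-c|$ is bounded by a universal constant $C'$, giving $e^{\epsilon C'}$.

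\textbf{Main obstacle.} I expect the genuinely delicate point to be making the induction self-contained: one must identify the right invariant region $\mathcal{D} \supset \gst(F^m(T),\infty)$ on which the whole statement can be phrased, check that every $L_{a_j}$ maps $\mathcal{D}$ into itself (this is where $\Re c + 4 < C$ and $\PP(f)\cap\H_C=\emptyset$ enter, ensuring the branches are defined and land back in $\H_C$), and verify that the ``$T$ large'' threshold can be chosen uniformly in $m$ — that is, that $F^m(T)$ being large enough for stage $m$ is implied by $T$ itself being large enough, which works because $F(t) = e^t - 1 > t$ pushes the threshold in the right direction. The estimates themselves are elementary properties of $\log$, but bookkeeping the three constants $\epsilon/2$, $\epsilon$, and $\epsilon C'$ through the recursion without circularity is the part that needs care.
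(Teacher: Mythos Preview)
Your overall plan and your derivative computation are essentially the paper's, but the inductive structure you propose runs in the opposite direction, and that is exactly what creates the ``subtlety'' you flag. You induct \emph{inside-out}: apply $L_{a_1}$ first to get $w = L_{a_1}(z)$ and $w_0 = L_0(z)$, then recurse on the remaining $m-1$ branches. As you note, $w$ is no longer on $\gst$, so you would have to reformulate over a region; worse, even after reformulating, the inductive hypothesis at level $m-1$ compares $L_{a_m}\circ\cdots\circ L_{a_2}$ and $L_0^{m-1}$ applied to the \emph{same} point, whereas you need them applied to $w$ and $w_0$ respectively, so an extra discrepancy $|L_0^{m-1}(w) - L_0^{m-1}(w_0)|$ must be tracked. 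The paper avoids all of this by inducting \emph{outside-in}: keep $z$ fixed throughout, show $(1_m) \Rightarrow (2_m)$ and $(2_m) \Rightarrow (1_{m+1})$ by adding one branch on the \emph{left}. Since $\Re L_{a_{m+1}}(\cdot) = \log|\cdot - c|$ depends only on the modulus, $(2_m)$ immediately gives $(1_{m+1})$, and no change of base point ever occurs.

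You also miss one ingredient that the paper uses essentially: the reference orbit $L_0^m(z)$ is not just ``in $\H_C$'' but is literally the point $g_{0^m\stilde}(F^{-m}(\text{potential}))$ on a ray with bounded address, hence by the asymptotics of Theorem~\ref{Existence of dynamic rays} it satisfies $|L_0^m(z) - t'| < \epsilon$ for the corresponding potential $t'$, so in particular it lies within $\epsilon$ of the real axis. This is what makes $(1_m) \Rightarrow (2_m)$ work: the modulus $|L_0^m(z) - c|$ is close to its real part precisely because the imaginary part is small, and that is where the condition $C > 8\pi^2/\epsilon$ is actually used. Your estimate ``imaginary-part contributions $\le 2\pi$ each'' is too crude for this step. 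Finally, your parenthetical about $|L_0^{j-1}(z) - c|$ is confused --- applying $L_0$ makes points \emph{smaller}, not larger --- but the conclusion that $\sum_j 1/|L_0^{j-1}(z)-c|$ is bounded is correct; the paper gets it cleanly from $\Re L_0^{m-j}(z) \ge C^{2^j}$, i.e., reading the orbit forward from the innermost point the real parts square at each step.
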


\begin{proof}
Denote by $0^m$ the finite sequence formed by $m$ zeroes,  and let 
\[\AA'=\{\a\in \AA: \a=0^m \stilde,\, m\in\N\}.\]
As $\|\stilde\|_\infty<M$ for some $M$, we have $\|\a\|_\infty<M$ for any  $\a\in \AA'$. 
It follows from $(3)$ in Theorem~\ref{Existence of dynamic rays}    that for any $\epsilon$ there exists $T_\epsilon$ such that 
\begin{equation}  \label{Bounds on Aprime}
|g_\a(t)-t|<\epsilon \end{equation} 
\noindent for any $\a\in \AA',\; t>T_\epsilon$.
By Remark~\ref{Bound on bounded addresses}, we have that 

  \begin{equation}\label{Bounds on Aprime address}
   g_{0^m \stilde}(t)=L_0^m\gst(F^m(t))\ \  \text{ for } t\geq 2\log(K+3)
  \end{equation}  
 and that 
\begin{equation}\label{Bounds on Aprime length}
\leucl (I_t(g_\a))<B(t)\sim e^t-t \ \forall a\in\AA'.
\end{equation}

\noindent We first show that $(1_m)$ implies $(2_m)$, and then that $(2_m)$ implies $(1_{m+1})$.
Let $T>T_\epsilon$ be large enough. By $(\ref{Bounds on Aprime})$   for all $t>T$, 

\begin{equation}\label{To the right}
\Re L_0^m(\gst(F^m(t)))>T-\epsilon>C+\Re c \  \text{\  for all $m$}.\end{equation}
 
\noindent For $m=1$, we have that $\Re L_{a_1}(z)=\Re L_0(z)$, so  $(1_1)$ holds by (\ref{To the right}). 

\noindent Now let us show that $(1_m)$  implies $(2_m)$.

\begin{align*}
&|L_0^{m} (z)-c|=\sqrt{|\Re L_0^{m} (z)-\Re c|^2+|\Im L_0^{m} (z)-\Im c|^2}\leq \\
&\leq (\Re L_0^{m} (z)-\Re c)\sqrt{1+\frac{\epsilon}{|\Re L_0^{m} (z)-\Re c|}}\leq\\
&\leq \Re L_0^{m} (z)-\Re c+\frac{\epsilon}{2}\leq \Re L_{a_{m}}\circ\dots\circ L_{a_1}(z)-\Re c+\epsilon\leq \\
&\leq |L_{a_{m}}\circ\dots\circ L_{a_1}(z)-c|+\epsilon. 
 \end{align*}
\noindent To show that  $(2_m)$ implies $(1_{m+1})$ for any $m $, observe that

\begin{align*}
&\Re L_{a_{m+1}}\circ\dots\circ L_{a_1}(z)=\log|L_{a_m}\circ\dots\circ L_{a_1}(z)-c|\geq \\
&\geq\log(|L_0^m(z)-c|-\epsilon)\geq\log|L_0^m(z)-c|-\frac{2\epsilon}{|L_0^m (z)-c|}\geq \\
&\geq\log|L_0^m (z)-c|-\epsilon/2=\Re L_0^{m+1}(z)-\epsilon/2>C+\Re c.
\end{align*}

We now check Equation~(\ref{Derivative}). We need a preliminary observation. Let $w$ be a point with $\Re w >C$ whose image $f(w)$ is in $S_0$. By computing the intersections of $\partial S_0$ with the circle of radius $e^{\Re w}$ centered at $c$ (and using the fact that $|\Im c|\leq \pi$) we obtain 
\begin{equation}
\Re f(w)\geq \Re c+e^{\Re w}/2\geq (\Re w)^2\geq C^2.
\end{equation}
Since if $w:=L_0^{m}(z)$ we have $f^{j}(w)\in S_0$ for $j\leq m-1$, it follows   by induction that 
$$ \Re f^j(w)=\Re L_0^{m-j}(z)\geq (\Re w)^{2^j} \geq C^{2^j}$$ hence 
$$|L_0^{m-j}(z)-c|\geq\Re L_0^{m-j}(z)-\Re c\geq C^{2^j}-\Re c.$$ Let
$C'=2\sum_{0}^{\infty}\frac{1}{C^{2^j}-\Re c}<\infty$, and let $\alpha_i:=|L^i_0(z)-c|$.
 Then by direct computation using Property $(2_m)$ and Taylor expansions we have that 

\begin{align*}
|(L_{a_m}\circ\dots\circ L_{a_1})'(z)|&=\frac{1}{|z-c|}\frac{1}{|L_{a_1}(z)-c|}\cdots\frac{1}{|(L_{a_{m-1}}\circ\dots\circ L_{a_1})(z)-c|}\leq\\
&\leq\frac{1}{|z-c|}\frac{1}{|L_{0}(z)-c|-\epsilon}\cdots\frac{1}{|(L_0^{m-1}(z)-c|-\epsilon}\leq\\
&\leq |(L^m_0)'(z)| \prod_{i=1}^{m-1} \frac{1}{1-\epsilon/\alpha_i}
\leq |(L^m_0)'(z)| e^{-\sum_{i=1}^{m-1} \log({1-\epsilon/\alpha_i})}\leq\\
&\leq|(L^m_0)'(z)|e^{\epsilon C'}.
\end{align*}
\end{proof}

\begin{proof}[Proof of Proposition~\ref{Bounded fundamental domains for exponentials}.]
 All addresses in $\AA$ are bounded, so by Remark~\ref{Bound on bounded addresses}, $g_\a(t)$ has address $\a$ for  $t>2\log(K+3)$, proving Property $(P1)$.   
Take $T$ such that Proposition~\ref{Branches of the logarithm}  holds. 
From Property $(1_m)$, 
$\Re g_\a(t)\geq \Re L_0^m(g_{\s}(F^m(t)))-\epsilon/2> C+\Re c$  proving Property $(P2)$. 

Finally, $\leucl(I_t(g_\a))\leq B(t)$ for all $\a\in \AA'$ by (\ref{Bounds on Aprime length}), so    $\leucl (I_t(g_\a))\leq B'(t)$ for all $\a\in \AA$ and for some $B'$ by (\ref{Derivative}); property $(P3)$ follows.
\end{proof}

Let us now show that the length of fundamental domains $\{I_t(g_\a)\}
_{\a\in\AA}$ shrinks as $t\ra0$ for the family of pullbacks of the ray $g_\stilde$. The next proposition  follows from  classical arguments for normal families, see e.g. \cite{L0}, Proposition 3. We include a proof for completeness.

\begin{prop}[Shrinking under inverse iterates]\label{Shrinking under inverse iterates}
Let $f$ be an exponential map, and $U$ be a simply connected domain not intersecting the postsingular set.  
Let $K$ be any   compact set and let $L$ be another compact set which is compactly contained in $U$ and such that $L\subset J(f)$.  Denote by  $\{f_\lambda^{-m}\}$ the family of branches  of $f^{-m}$ such that  $f_\lambda^{-m}(L)\cap K\neq\emptyset$. 
 Then 
\[\diam(f_\lambda^{-m}(L))\ra0 \text{ as } m\ra\infty\]
uniformly in $\lambda$.
\end{prop}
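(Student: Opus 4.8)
The plan is to use a normality argument for the inverse branches, exactly as in the classical shrinking-of-pullbacks lemma for polynomial and rational dynamics. First I would observe that since $U$ is simply connected and disjoint from $\PP(f)$, and since $f^{-1}(\Omega)\subset\Omega$ (where $\Omega=\C\setminus\PP(f)$), every branch $f_\lambda^{-m}$ is a well-defined univalent map on $U$ with image inside $\Omega$; in particular the family $\{f_\lambda^{-m}|_U\}_{m,\lambda}$ omits the infinite set $\PP(f)$ (which has at least three points, by Proposition~\ref{Connected Omega} and the remark following it), so by Montel's theorem it is a normal family on $U$.

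Next I would argue by contradiction: suppose $\diam(f_\lambda^{-m}(L))$ does not tend to $0$ uniformly, so there is an $\eps_0>0$, a sequence $m_k\ra\infty$, and branches $f_{\lambda_k}^{-m_k}$ with $f_{\lambda_k}^{-m_k}(L)\cap K\neq\emptyset$ and $\diam(f_{\lambda_k}^{-m_k}(L))\geq\eps_0$. Passing to a subsequence, normality gives a locally uniform limit $h:U\ra\Chat$. The condition $f_{\lambda_k}^{-m_k}(L)\cap K\neq\emptyset$ together with the diameter bound keeps the images from escaping to a point of the complement, so $h$ is nonconstant (its restriction to $L$ has diameter $\geq\eps_0$), hence $h$ is a nonconstant holomorphic, in fact univalent, map $U\ra\Omega$ (univalence of a locally uniform limit of univalent maps that is nonconstant). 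Now apply $f^{m_k}$: since $f^{m_k}\circ f_{\lambda_k}^{-m_k}=\id$ on $U$, for any $z\in L$ the point $z=f^{m_k}(f_{\lambda_k}^{-m_k}(z))$, and $f_{\lambda_k}^{-m_k}(z)\ra h(z)$; since $h$ is open, $h(U)$ contains a small disk $D$ around $h(z_0)$ for some fixed $z_0\in L$, and for $k$ large $f_{\lambda_k}^{-m_k}(U)\supset D$ as well. Thus $f^{m_k}$ restricted to the fixed disk $D$ has a univalent branch (namely $f_{\lambda_k}^{-m_k}$ composed appropriately) onto a neighborhood of $z_0$ — so the iterates $f^{m_k}$ are univalent, hence a normal family, on $D$. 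But $L\subset J(f)$ and $z_0\in L$, so $z_0\in J(f)$, while $z_0\in h(U)\subset D$ is a point where $\{f^n\}$ is normal, contradicting $z_0\in J(f)$.

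The main obstacle is making the last step clean: I must ensure that the nonconstancy of the limit $h$ genuinely produces an open set on which infinitely many iterates $f^{m_k}$ are univalent (equivalently, on which a full family of inverse branches is defined and normal), so that I land inside the Fatou set and contradict $L\subset J(f)$. This requires tracking that $f_{\lambda_k}^{-m_k}(L)$ not only has diameter $\geq\eps_0$ but stays in a fixed compact part of $\Omega$ — which follows because it meets the compact set $K$ and $f^{-1}(\Omega)\subset\Omega$ forces all these images to lie in $\Omega$, and because the limit $h$ is finite-valued (the normal limit cannot be the constant $\infty$ since the images meet $K$). A minor point to handle carefully: $h$ being a \emph{univalent} limit (not just holomorphic) uses the Hurwitz-type theorem for univalent maps, and the fact that $h$ is nonconstant because $\diam h(L)\geq\eps_0>0$. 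Once $h(U)$ is known to be a genuine open subset of $J(f)$'s complement behaviour — precisely, a domain on which the relevant branches of $f^{-m_k}$ converge — normality of $\{f^{m_k}\}$ there is immediate, and the contradiction with $L\cap J(f)\neq\emptyset$ closes the argument.
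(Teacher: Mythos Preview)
Your approach is essentially the paper's: normality of the inverse branches on $U$ (via Montel, since they all omit $\PP(f)$), extraction of a nonconstant univalent limit, and then an open set on which a subsequence $\{f^{m_k}\}$ has images contained in $U$, contradicting membership in $J(f)$. The paper's proof is exactly this argument, phrased only slightly differently (it takes the contradictory Julia point as an accumulation point $y\in K$ of preimages $f_{\lambda_k}^{-m_k}(x_k)$ with $x_k\in L$, rather than as $h(z_0)$ for a fixed $z_0\in L$).

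There is, however, a slip in your final sentence. You write ``$z_0\in h(U)\subset D$'' and then claim that normality on $D$ contradicts $z_0\in J(f)$. But $z_0$ lies in $L\subset U$, the \emph{domain} of $h$, not in $D\subset h(U)$ (note also the reversed inclusion: you defined $D$ as a disk inside $h(U)$, not the other way around). The point sitting in $D$ is $h(z_0)$, so the contradiction must be with $h(z_0)\in J(f)$. This is true, but you have not said why: it holds because $h(z_0)=\lim_k f_{\lambda_k}^{-m_k}(z_0)$, each $f_{\lambda_k}^{-m_k}(z_0)$ lies in $J(f)$ by complete invariance of the Julia set, and $J(f)$ is closed. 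Once you replace $z_0$ by $h(z_0)$ at the end and insert this one-line justification, your proof agrees with the paper's.
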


\begin{proof}

As $U$ is simply connected, and does not intersect the postsingular set, inverse branches are well defined 
on $U$.
Suppose by contradiction that there is $\epsilon>0,\ m_k\ra\infty,$ and branches $f^{-m_k}_{\lambda_k}$ of $f^{-m_k}$ such that

\begin{itemize} \item[(1)] $\diam_\eucl(f_{\lambda_k}^{-m_k}(L))>\epsilon$ for any $m_k, \lambda_k$;
\item[(2)] 
$f_{\lambda_k}^{-m_k}(L) \cap K\neq \emptyset$ for any $m_k, \lambda_k$.
\end{itemize}

By normality of inverse branches, there is a subsequence converging to a univalent function $\phi$, {which is non-constant by $(1)$}. 
By $(2)$ there is a sequence of points $\{x_k\}\in L$ such that $f_{\lambda_k}^{-m_k}(x_k)\in K$, and by compactness of $K$, the $f_{\lambda_k}^{-m_k}(x_k)$ accumulate on some point $y\in K$. 
 As $\phi$ is not constant, there is  a neighborhood $V$ of $y$  and infinitely many $m_k$ such that $ f^{m_k} (V)\subset U$, contradicting the fact that $y\in J(f)$.
\end{proof}

\begin{prop}[Fundamental domains shrinking for exponentials]\label{Fundamental domains shrinking for exponentials}
Let $\gst$ be a dynamic ray with bounded address, $\{g_\a \}_{\a\in\AA}$  be its family of pullbacks as defined in Proposition~\ref{Bounded fundamental domains  for exponentials}.
 Given an  $\epsilon>0$ and a compact set $K$ there exists  $t_\epsilon=t_\epsilon(K)$ such that $\leucl(I_t(g_\a))<\epsilon$ whenever $t < t_\epsilon$ and $I_t(g_\a)\cap K\neq\emptyset$.
\end{prop}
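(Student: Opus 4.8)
The plan is to combine the uniform bound $\leucl(I_t(g_\a))\leq B'(t)$ from Proposition~\ref{Bounded fundamental domains for exponentials}(P3) with the shrinking principle of Proposition~\ref{Shrinking under inverse iterates}, exactly as Proposition~\ref{Fundamental domains shrinking for exponentials} plays the role for exponentials of Proposition~\ref{Fundamental domains shrinking} in the polynomial case. The key switch is from the Euclidean length near infinity to the hyperbolic length in $\Omega=\C\setminus\PP(f)$ and back, using that $f^{-1}(\Omega)\subset\Omega$ so that the Schwarz Lemma makes the hyperbolic length of a ray segment non-increasing under pullback.

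First I would fix $T$ large enough that all three properties (P1)--(P3) of Proposition~\ref{Bounded fundamental domains for exponentials} hold for $g_\stilde$ and its pullbacks, and enlarge $T$ so that $g_\stilde(T,\infty)$ together with all its inverse iterates lies in $\H_C$ (possible by (P2)), which in particular is disjoint from $\PP(f)$. By (P3) the Euclidean length of the segment $g_\stilde(T,F^2(T))$ is bounded, and by \eqref{hyperbolic limit} its hyperbolic length $\ell_\Omega$ is finite; more to the point, for every $\a\in\AA$ the segment $g_\a(T,F^2(T))$ has uniformly bounded Euclidean length by (P3), hence (since all these segments lie in a fixed region $\H_C$ on which $\rho_\Omega$ is bounded above) uniformly bounded hyperbolic length $\ell_\Omega$. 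Now any fundamental domain $I_t(g_\a)$ with $t<T$ is, by the functional equation (P1) and the definition of $\AA$, contained in $f^{-n}\big(g_\be(T,F^2(T))\big)$ for a suitable $\be\in\AA$ and some $n\geq 0$; since $\Omega$ is backward invariant, the Schwarz Lemma gives $\ell_\Omega(I_t(g_\a))\leq\ell_\Omega\big(g_\be(T,F^2(T))\big)$, so the hyperbolic lengths of all these fundamental domains are uniformly bounded, independently of $\a$ and of how small $t$ is.

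Next I would feed this into the shrinking principle. Fix the compact set $K$ and the target $\epsilon>0$. Each such fundamental domain $I_t(g_\a)$ meeting $K$ is the image of a fixed segment $L:=g_\be([T,F^2(T)])\subset\Omega$ (with $L\subset J(f)$, since rays consist of escaping points which lie in $J(f)$ for exponentials) under an inverse branch $f^{-n}_\lambda$ of $f^n$, and the relevant branches are precisely those with $f^{-n}_\lambda(L)\cap K\neq\emptyset$; choose a slightly larger simply connected $U\supset L$ disjoint from $\PP(f)$, which exists because $\PP(f)$ is bounded and we may work in a small neighborhood of the compact segment $L$ lying in $\H_C$. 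Proposition~\ref{Shrinking under inverse iterates} then gives $\diam(f^{-n}_\lambda(L))\to 0$ uniformly as $n\to\infty$; since along $t\to 0$ the depth $n$ of the pullback needed to reach potential $t$ tends to infinity (as $t=F^{-n}(T)\to 0$), there is $t_\epsilon(K)$ so that $t<t_\epsilon$ forces $n$ large enough that $\leucl(I_t(g_\a))\leq\diam(f^{-n}_\lambda(L))<\epsilon$ whenever $I_t(g_\a)\cap K\neq\emptyset$.

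The main obstacle is the bookkeeping that lets one conclude from ``$t$ small'' that ``the pullback depth $n$ is large, uniformly over $\a\in\AA$'': one has to be careful that $I_t(g_\a)$ is realized as $f^{-n}$ of a segment of a \emph{fixed} ray $g_\be$ sitting above a fixed potential window $[T,F^2(T)]$, rather than of segments that themselves drift toward small potentials (which Proposition~\ref{Shrinking under inverse iterates} would not control). This is exactly the point handled in the polynomial proof by the parenthetical remark ``$I_t(g_\s)\subset f^{-n}(g_\stilde(t_*,t^*))$ for some $\stilde$ and some $n\geq 0$'', and here it works because $\AA$ is closed under $\sigma$ in the appropriate direction and because (P1) identifies the pullbacks concretely as compositions of the branches $L_{a_j}$; the uniform estimate \eqref{Derivative} guarantees that no distortion blows up along these compositions. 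A secondary, purely technical point is to verify that the hyperbolic density $\rho_\Omega$ is genuinely bounded above on the fixed region $\H_C$ containing all the high-potential segments, which follows from comparison with $\C\setminus\overline{\D_R}$ as in the justification of \eqref{hyperbolic limit}.
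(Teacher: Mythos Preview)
Your proposal has a genuine gap at the step where you invoke Proposition~\ref{Shrinking under inverse iterates}. You write that each $I_t(g_\a)$ meeting $K$ is the image under some $f^{-n}_\lambda$ of ``a fixed segment $L:=g_\be([T,F^2(T)])$'', but $\be$ here is \emph{not} fixed: it ranges over all of $\AA$, and the arcs $\{g_\be([T,F^2(T)])\}_{\be\in\AA}$ form an unbounded family in $\H_C$ (their imaginary parts are essentially $2\pi b_0$ with $b_0\in\Z$ the first entry of $\be$). Proposition~\ref{Shrinking under inverse iterates} requires a single compact $L$ compactly contained in a simply connected $U\subset\Omega$, so you cannot apply it uniformly over this infinite, unbounded family. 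Your ``main obstacle'' paragraph misidentifies the difficulty: getting the pullback depth $n$ large from $t$ small is automatic (indeed $t<F^{-n}(T)$ forces at least $n$ steps); what fails is that the \emph{target} segment at level $T$ does not sit in any fixed compact set.

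The paper repairs this with a dichotomy you are missing. First it uses your uniform hyperbolic-length bound, together with the hypothesis $I_t(g_\a)\cap K\neq\emptyset$, to trap all the relevant fundamental domains in a fixed compact $K'\supset K$; on $K'\cap\Omega$ the density $\rho_\Omega$ is bounded below, so small hyperbolic length there implies small Euclidean length. Then it splits the family $\{I_T(g_\be)\}_{\be\in\AA}$ into two pieces: those \emph{not} contained in some large closed disk $D$ already have $\ell_\Omega(I_T(g_\be))<\epsilon'$ by \eqref{hyperbolic limit}, and the Schwarz Lemma propagates this to all pullbacks (so $t_\epsilon=T$ suffices for these); those contained in $D\cap\H_C$ now lie in a genuinely compact region, and Proposition~\ref{Shrinking under inverse iterates} applies with $L$ inside that region to produce a uniform $n_\epsilon$. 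Your outline assembles the right ingredients (the Schwarz Lemma, the hyperbolic bound, the shrinking principle), but without this near/far split the argument does not close.
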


\begin{proof}
Let  $\Omega=\C\setminus \PP(f)$, and consider the hyperbolic metric on $\Omega$. Let $C>0$ such that Proposition~\ref{Bounded fundamental domains for exponentials} holds and such that $\PP(f)\cap \H_C=\emptyset$. 
Let  $T$ be as in Proposition~\ref{Bounded fundamental domains for exponentials} so that $I_t(g_\a)\subset \H_C$ for any $\a\in \AA$, $t>T$. Inverse branches  of $f^{n}$ are well defined on the family  $\{I_T(g_\a)\}$ because it is contained in a right half plane not intersecting the postsingular set. 

We first show that, for fundamental domains which are pullbacks of the family  $\{I_T(g_\a)\}$ and which intersect $K$, in order to have small Euclidean length it is sufficient to have small hyperbolic length with respect to the hyperbolic metric on $\Omega$.  
The arcs $\{g_\a (T,F^2(T))\}$ have uniformly bounded Euclidean length by $(P3)$ in Proposition~\ref{Bounded fundamental domains for exponentials}, so they have uniformly bounded hyperbolic length because they are contained in $\H_C$  and $\H_C\cap\PP(f)=\emptyset$ (together with the asymptotic estimates in (\ref{hyperbolic limit})). Then by the Schwarz Lemma the hyperbolic length of the arcs in the families $\{I_t(g_\a)\}_{t<T}$ is bounded uniformly as well.
Since $K$ is compact all fundamental domains which intersect $K$ have also uniformly bounded Euclidean length, so there exists a compact set $K'\supset K$ such that $I_t(g_\a)\subset K'$ whenever $I_t(g_\a)\cap K\neq\emptyset$. 
By compactness of  $K'$ there exists $\epsilon'$ such that $\leucl(\gamma)<\epsilon$ for any curve $\gamma\subset (K'\cap\Omega)$ with $\lhyp(\gamma)<\epsilon'$. 

So it remains to show that there exists $t_\epsilon$ such that, for all $\a\in\AA$ and 
$t<t_\epsilon$, $\lhyp(I_t(g_\a))<\epsilon'$ whenever  $I_t(g_\a)\cap K\neq\emptyset$. 
Since the length  $\leucl(I_T(g_\a))$ is bounded for every $\a\in \AA$ by Proposition~\ref{Bounded fundamental domains for exponentials}, and the family $\{I_T(g_\a)\}$ is contained in $\H_C$,  by the asymptotic estimates in (\ref{hyperbolic limit}) there exists a closed  disk $D$  of sufficiently large radius such that $\lhyp(I_T(g_\a))<\epsilon'$ for any $\a\in\AA$ with $I_T(g_\a)\not\subset D$. For the pullbacks of any such $\a$ the claim then follows by the Schwarz Lemma (and in fact, for such $\a$ $t_\epsilon=T$).  
 Let us now consider any $\a\in \AA$ such that $I_T(g_\a)\subset D\cap\H_C$. By Proposition~\ref{Shrinking under inverse iterates} there is $n_\epsilon$ such that $\diam_{\text{eucl}}f^{-n}(I_T(g_\a))<\epsilon$ for any $n>n_\epsilon$ and any branch such that   $f^{-n}(I_T(g_\a)\cap K\neq\emptyset$; the claim then holds for any $t<t_\epsilon=F^{-n_\epsilon}(T)$.
\end{proof}

\subsection{Proof of Accessibility Theorems}
\label{Proof of accessibility theorems}

In this Section we prove Theorems~\ref{Non-recurrent parameters} and 
\ref{Accessibility of hyperbolic sets exp} as well as Proposition~\ref{Finiteness exp}. To ease the following proofs we introduce the proposition below.

\begin{prop}\label{Families of bounded addresses} 
Let $K$ be a compact set, $\gz$ be a dynamic ray with bounded address, $\AA$ as in Proposition~\ref{Bounded fundamental domains for exponentials}, and $\AA'$ be any subset of $\AA$ such that for some $t_0>0$ and for all $\a\in\AA'$ the following properties hold:
\begin{itemize}
\item[(1)] $g_\a(t_0)\in K$;
\item[(2)] If $\a\neq\szero$, then $\sigma\a\in\AA'$.
\end{itemize}
 Then there exists $M>0$ such that for all  $\a\in\AA'$, $ \|  \a \|_\infty<M$. The constant $M$ depends on $K, t_0$ and $\s_0$.
\end{prop}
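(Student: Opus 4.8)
The goal is to produce a single bound $M$ on $\|\a\|_\infty$ valid for every $\a\in\AA'$, using only the fact that the finitely-many orbits of addresses in $\AA'$ all pass through the compact set $K$ at potential $t_0$, together with the structural properties $(1_m)$, $(2_m)$ and the derivative bound $(\ref{Derivative})$ from Proposition~\ref{Branches of the logarithm}. The key observation is that the entries $a_m,\dots,a_1$ of an address $\a=a_m\ldots a_1\szero\in\AA'$ are determined by which strip $S_{a_j}$ contains the point $f^{m-j}(g_\a(t_0))$; since $g_\a(t_0)\in K$ is a bounded point and each forward iterate $f^{j}(g_\a(t_0))=g_{\sigma^j\a}(F^{j}(t_0))$ is, by property $(2)$, again a point of the form $g_{\a'}(t_0)$ with $\a'\in\AA'$, it too lies in $K$. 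So it suffices to bound, for a point $w\in K$, the index $n$ of the strip $S_n$ containing $L_{a}\circ(\text{an inverse branch})$ — more precisely, to bound $|\Im z|$ for the relevant preimages $z$ of points in $K$.

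First I would set up notation: fix $\a=a_m\ldots a_1\szero\in\AA'$ and write $z_j:=g_{\sigma^{m-j}\a}(t_0)$ for $j=0,\dots,m$, so $z_0=g_\a(t_0)$ and $z_m=g_{\szero}(t_0)$, with $f(z_j)=z_{j+1}$ (using the functional equation $(2)$ of Theorem~\ref{Existence of dynamic rays} after checking $F^{k}(t_0)$ is large enough, or else extending $g$ to small potentials as in Remark~\ref{Bound on bounded addresses}). By hypothesis $(1)$–$(2)$, every $z_j\in K$. Now $a_{m-j}$ is the index of the strip $S_{a_{m-j}}$ containing $z_j=L_{a_{m-j}}\circ\cdots\circ L_{a_1}(z_m)$; equivalently $z_j=L_{a_{m-j}}(z_{j+1})$, so $2\pi a_{m-j}=\Im z_j-\arg(z_{j+1}-c)$, whence $|a_{m-j}|\le \frac{1}{2\pi}(|\Im z_j|+\pi)$. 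Since $z_j\in K$, the quantity $|\Im z_j|$ is bounded by $\sup_{w\in K}|\Im w|=:R_K$, which is finite because $K$ is compact. This already gives $|a_{m-j}|\le \frac{R_K+\pi}{2\pi}$ for every $j$, i.e. $\|\a\|_\infty\le \frac{R_K+\pi}{2\pi}+\|\szero\|_\infty$, and one takes $M:=\frac{R_K+\pi}{2\pi}+\|\szero\|_\infty+1$.

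The one subtlety — which is where I expect the only real care is needed — is legitimizing the identity $z_j=L_{a_{m-j}}(z_{j+1})$ at the \emph{specific} potential $t_0$, since Theorem~\ref{Existence of dynamic rays}(1) and Remark~\ref{Bound on bounded addresses} only guarantee that $g_\a(t)$ has address $\a$ (hence lies in the prescribed strips) for $t$ larger than an explicit threshold depending on $K$ via $|c|\le K$ — not necessarily for the given $t_0$, which may be small. The clean fix is: the address of a ray is \emph{constant} along the ray, i.e. $g_\a((t_\a,\infty))$ together with its image structure forces $a_j=\operatorname{itin}(g_\a(t))_{?}$ independently of $t$ once it is defined for large $t$ — concretely, $L_{a_{m-j}}$ is by Proposition~\ref{Bounded fundamental domains for exponentials}(P1) the branch realizing $g_{\sigma^{m-j-1}\a}=L_{a_{m-j}}\circ\cdots$, valid for all $t>T$, and then $L_{a_{m-j}}(g_{\sigma^{m-j}\a}(t))=g_{\sigma^{m-j-1}\a}(t)$ extends to all $t>t_{\sigma^{m-j}\a}\le t_{\szero}$ by analytic continuation of the injective curve $g$, since $L_{a_{m-j}}$ is a globally defined biholomorphism of $\C\setminus R$ onto $S_{a_{m-j}}$. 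Hence the relation $z_j=L_{a_{m-j}}(z_{j+1})$ holds at $t=t_0$, and the estimate above goes through. Finally I would note that $M$ depends only on $R_K=R_K(K)$ and on $\|\szero\|_\infty$, i.e. on $K$ and $\szero$ (and, through the admissibility of $t_0$, on $t_0$), as claimed.
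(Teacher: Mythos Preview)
There is a genuine gap. Your central identity $f(z_j)=z_{j+1}$ is false: the functional equation in Theorem~\ref{Existence of dynamic rays}(2) reads $f(g_\s(t))=g_{\sigma\s}(F(t))$, so the potential is \emph{not} preserved. With $z_j:=g_{\sigma^{j}\a}(t_0)$ (your indexing is also reversed, but that is cosmetic), one has $f(z_j)=g_{\sigma^{j+1}\a}(F(t_0))\neq g_{\sigma^{j+1}\a}(t_0)=z_{j+1}$. Consequently the relation $z_j=L_{a_{m-j}}(z_{j+1})$ fails, and you cannot conclude $z_j\in S_{a_{m-j}}$. What hypotheses (1)--(2) give you is only that each $g_{\sigma^{j}\a}(t_0)\in K$, hence that $|\Im g_{\sigma^{j}\a}(t_0)|$ is bounded; but the entry $a_{m-j}$ is the index of the strip containing $g_{\sigma^{j}\a}(t)$ for \emph{large} $t$ (Property~(P1) holds only for $t>T$, cf.\ Remark~\ref{Bound on bounded addresses}), and at the possibly small potential $t_0$ the ray may well sit in a different strip. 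Your analytic-continuation paragraph does not repair this: you again write $L_{a_{m-j}}(g_{\sigma^{m-j}\a}(t))=g_{\sigma^{m-j-1}\a}(t)$, which has the same potential-mismatch error.

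The paper's proof handles exactly this difficulty. It first pushes forward by $N$ iterates so that $F^N(t_0)>T$; then $g_{\sigma^N\a}(F^N(t_0))\in f^N(K)$ is a compact set, and since at potential $F^N(t_0)>T$ Property~(P1) \emph{does} apply, the first entry of $\sigma^N\a$ is bounded. Invariance under $\sigma$ (hypothesis (2)) then bounds $\|\sigma^N\a\|_\infty$. To recover the first $N$ entries, the paper controls the Euclidean length of the arc $g_\a(t_0,F^N(t_0))$ via the bound on $|(f^N)'|$ over the compact $K$ together with (\ref{length}), concluding that $g_\a(F^N(t_0))$ is a bounded distance from $K$ and hence lies in finitely many strips. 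This bridging step---moving from potential $t_0$ up to a potential where (P1) is valid---is precisely what is missing from your argument.
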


\begin{proof}
Let $T$ be as in Proposition~\ref{Bounded fundamental domains for exponentials}, and let $N$ be such that $F^N(t_0)>T$. The set $f^N(K)$ is compact, so there exists $M'>\|\szero\|_\infty$ such that 
$|\Im z|<2\pi M'$ for all $z\in f^N(K)$. 
For any $\a\in\AA'$,  $g_\a(t_0)\in K$, so $g_{\sigma^N\a}(F^N(t_0))\in f^N(K)$ and  by Property $\emph{(P1)}$ in Proposition~\ref{Bounded fundamental domains for exponentials}, the first entry of $\sigma^N\a$ is bounded by $M'$. From {\cal{(2)}}, we get that $\|\sigma^N\a\|_\infty<M'$ for any $\a\in\AA$.

We now show that the points ${g_\a(F^N(t_0))}$ are also contained in finitely many of the strips $S_n$; then, by Property $\emph{(P1)}$ in Proposition~\ref{Bounded fundamental domains for exponentials}, the first entry of $\a$ is bounded by some $M$ for all $\a\in\AA$, and the claim follows by $(2)$. 
Using (\ref{length})  it follows that  
 $\leucl(g_{\sigma^N\a}(F^N(t_0), F^{2N}(t_0)))\leq B$ for some $B>0$, and for all $\a\in\AA'$. Since $|(f^N)'|$ is bounded on $K$ because $K$ is compact, there exists $B'>0$ such that $\leucl(g_{\a}(t_0, F^{N}(t_0)))\leq B'$ for all $\a\in\AA'$ so that the points ${g_\a(F^N(t_0))}$ are also contained in finitely many of the strips $S_n$ (since they are a finite distance away from the compact set $K$).
\end{proof}

We now prove Theorem~\ref{Non-recurrent parameters}.

\begin{proof}[Proof of Theorem~\ref{Non-recurrent parameters}]
 Let us first assume that the repelling periodic point under consideration is a  fixed point  $\alpha$.
Let $U,U', \psi$ and $\epsilon$ be as in the proof of Theorem~\ref{Landing of a ray}. 
Let $\gst$ be a dynamic ray with bounded address, $\{g_\a\}_{\a\in \AA}$ be its family of pullbacks and $C,T$ be such that Proposition~\ref{Bounded fundamental domains for exponentials} holds. 
Let $K:=\overline{U'}$ and $t_\epsilon$ be given by Proposition~\ref{Fundamental domains shrinking for exponentials} so that, for any $t<t_\epsilon$ and for any $I_t(g_\a)$ intersecting $\overline{U'}$, we have that 
$\ell_\eucl(I_t(g_\a))<\epsilon$.

Consider a dynamic ray $g_\szero$  such that $g_\szero(t_0)\in U$ for some $t_0<t_\epsilon$. To show that such a point exists, observe that since $U\cap J(f)\neq\emptyset$  there exists a sufficiently large $N_0$ with  $f^{N_0}(U)\cap I_{t_\epsilon}(\gst)\neq\emptyset$,  so there exists $y\in  I_{t_\epsilon}(\gst)$ such that $f^{-{N_0}}(y)\in U$ giving the desired $g_\szero(t_0)$.

Now we can use the inductive construction from Theorem~\ref{Landing of a ray} to obtain a sequence of dynamic rays $g_\sn$  such that the arcs  $\gamma_n:=g_\sn(t_n, F(t_0))$ are well defined and satisfy properties \emph{1-3} in the proof of  Theorem~\ref{Landing of a ray}. 
By Proposition~\ref{Families of bounded addresses}, $\|\s_n\|_\infty$ is uniformly bounded, so there exists at least one limiting address $\s$ for the addresses $\sn$; by  Lemma~\ref{landing} and Lemma~\ref{Convergence of rays}, the ray $g_\s$ lands at $\alpha$. 

In the case of a repelling periodic point of period $p>1$, the construction can be repeated using as fundamental domains the arcs between potential $t_0$ and $F^p(t_0)$.
 
The proof of periodicity of the landing rays is the same as the proof for polynomials, using the fact that the family of addresses $\{\sn\}$ is uniformly bounded hence Lemma~\ref{One for all} holds. 
\end{proof}

{The proof of accessibility of hyperbolic sets for an exponential map in the exponential case (Theorem~\ref{Accessibility of hyperbolic sets exp}) is a bit more complicated than in the polynomial case, essentially because Proposition~\ref{Fundamental domains shrinking} holds for all fundamental domains while Proposition~\ref{Fundamental domains shrinking for exponentials} holds only for the family of pullbacks of a given ray with bounded address. So, when constructing a ray landing at a point $x_0$,  we will need to consider fundamental domains all of which belong to the family of pullbacks of some initial ray with bounded address. To do this, we will consider a 
subsequence of the  iterates of $x_0$ all of which have near by a  fundamental domain $I_{t_0}(g_\szero)$(with sufficiently small potential $t_0$) of some ray $g_\szero$ with bounded address; then, we will pull back $I_{t_0}(g_\szero)$ and use the inductive construction from Theorem~\ref{Landing of a ray} in order to obtain arcs of rays near $x_0$ satisfying (\ref{prelam1}) and (\ref{prelam}).

\begin{proof}[Proof of Theorem~\ref{Accessibility of hyperbolic sets exp}]

Up to taking an iterate of $f$, we can assume that there is a  $\delta$-neighborhood $U$ of $\Lambda$ such that $|f'(x)|>\eta >1$ for all  $x\in U$. Let $\gst$ be a dynamic ray with bounded address and  $\{g_\a \}_{\a\in \AA}$ be its the family of pullbacks. 
For $\epsilon:=\delta-\delta/\eta$, let $t_\epsilon$ be such that  $\ell_{\eucl}(I_t(g_\a))<\epsilon$ for $\a\in\AA$ and   $t<t_\epsilon$ whenever $I_{t}(g_\a)\cap\ov{U}\neq\emptyset$ (see Proposition~\ref{Fundamental domains shrinking for exponentials}).

Let $t_0<t_\epsilon$ be such that for any $x\in\Lambda$, $B_{\delta/\eta}(x)$ contains  $g_\a(t_0)$ for some $\a\in\AA$ depending on $x$. To show that such a $t_0$ exists, consider a finite covering $\DD$ of $\Lambda$ by balls of radius $\frac{\delta}{3\eta}$. For any $D\in\DD$, there is  $N_D$ such that $f^{N_D}(D)\supset g_\stilde (0,t_\epsilon)$, hence for each $D$ there is some $\a\in\AA$ such that $g_\a(0, F^{-N_D}(t_\epsilon))\subset D$. 
By letting $N=\max{N_D}$, we have that $g_\a(0, F^{-N}t_\epsilon)\subset D$ for any $D$, hence that each $B_{\delta/\eta}(x)$ contains   $g_\a(F^{-N}(t_\epsilon))$ for some $\a\in\AA$.

Now fix $x_0$ in $\Lambda$, and let us construct a dynamic ray landing at $x_0$.  Let $x_n:=f^n(x_0)$, $B'_n:= B_\delta(x_n)$, $B_n:= B_{\delta/\eta }(x_{n})$.
For each $n$ there is a branch $\psi$ of $f^{-1}$ such that  $\psi(B'_n)\subset B_{n-1}$. Take a disk $D\in\DD$ containing infinitely many $x_n$, say  $\{x_n\}_{n\in\NN}\in D$ for some infinite set $\NN\subset \N$. Observe that for any such $n$, $D\subset B_n$. Let $\szero\in\AA$ be such that $g_\szero(t_0)\in D$; by Proposition~\ref{Fundamental domains shrinking for exponentials}, $\leucl(I_{t_0}(g_\szero))<\epsilon$, so for all $n\in \NN$ we have that  $I_{t_0}(g_\szero)\subset B_n'$.

 For $n\in\NN$, let $\psi^n$ be the branch of $f^{-n}$ mapping $x_n$ to $x_0$;  $\psi^n$ can be extended analytically to $B'_n$ and hence to $I_{t_0}(g_\szero)$. Let $g_\sn$ be the sequence of rays containing  $\psi^n (I_{t_0}(g_\szero))$. Let $t_n=F^{-n}(t_0)$.  
Following the inductive construction from the proof of Theorem~\ref{Landing of a ray},  the arcs $g_\sn(t_n,F(t_0))$ satisfy the following properties:

\begin{align}\label{prelam1}
& g_\sn (t_n,F(t_0))\subset B_0' \text{ and }\\
& I_{t_m}(g_\sn)\subset B\left(x_0, \frac{\delta}{\eta^m}\right) \text{ for all $n>m$, $n\in\NN$.} \label{prelam}
\end{align}

\noindent 
Consider now the family of addresses $\sn$ constructed for $x_0$ and the set
\[\widetilde{\AA}_{x_0}=\left\lbrace \a\in\SS:  \a=\sigma^j \sn \text{ for some }j\leq n \text{ and } n\in\NN        \right\rbrace.\]
By definition, if $\a\in\widetilde{\AA}_{x_0}$ then $\sigma\a\in\widetilde{\AA}_{x_0}$ unless $\a=\szero$, and for each $\a\in\widetilde{\AA}_
{x_0}$, $g_\a(t_0)\in\ov{U}$.  It follows  by Proposition~\ref{Families of bounded addresses} that $\|\a\|_\infty<M$ for all $\a\in \widetilde{\AA}_{x_0}$. In particular, $\|\sn\|_\infty<M$, and there is a limiting address $\s$ such that $g_\s$ lands at $x_0$ by Lemma~\ref{landing}.
By finiteness of $\DD$, there exists $M'>0$ such that  the addresses of all rays coming from the construction and landing at $x\in \Lambda$ are  bounded by $M'$. 
Finally, the addresses of any two rays landing  together cannot differ by more than one in any entry (see Remark~\ref{Intersecting rays}), so the address of any dynamic ray (not necessarily coming from this construction) landing at any $x\in \Lambda$ is bounded by $M'+1$.
\end{proof}

Let us conclude by showing that also in the exponential case, there are only finitely many rays landing at each point in  a hyperbolic set, thus proving Proposition~\ref{Finiteness exp}. The proof is very similar to the polynomial case.

\begin{proof}[Proof of Proposition~\ref{Finiteness exp}]

For $x \in \Lambda$, let $\AA_x$ be the set of addresses of the rays landing at $x$. 
By Theorem~\ref{Accessibility of hyperbolic sets exp}, each $\AA_x$ is non-empty.
 Since $f$ is  locally a homeomorphism  near $x$, the set $\AA_x$ is mapped bijectively to the set $\AA_{f(x)}$ by the shift map $\sigma$, so there is a well defined inverse $\sigma^{-1}: \AA_{f(x)}\ra\AA_x$.
 By Theorem~\ref{Accessibility of hyperbolic sets exp}, the norm of the addresses belonging to the set $\AA_x$ is  bounded by a constant $M$, so $\bigcup_{x\in\Lambda}\AA_x\subset \Sigma_D\subset\R/\Z$ for  $D=2M+1$ preserving the dynamics.
 
 

 
 {Assume first that $\sigma^{-1}$ is uniformly continuous in the sense of (\ref{contraction}). Then the proof is the same as in the polynomial case by considering a finite cover of $\Sigma_D$ by $\epsilon$-balls.}
 
Let us now prove (\ref{contraction}) in the exponential setting. Assume by contradiction that there is a sequence of points $x_n\in \Lambda$, and two sequences of addresses  
$\an, \an'\in \AA_{f(x_n)}$ such that $| \an-\an'|_D\ra 0$, but 
$|\sigmam \an-\sigmam \an'|_D\ra\delta\neq0$. Let $ \sn=\sigma^{-1}\an$, $ \sn'=\sigma^{-1}\an'$, and  assume for definiteness that $\s'_n<\s_n$.  Since $| \an-\an'|_D\ra 0$,  the sequences  $\an$ and $\an'$ converge to the same address, say $\a$.  Since  $\{\sigma^{-1}(\a)\}=\{j\a \text{ with } j\in\Z \}$, we have that $\sn$ and $\sn'$ converge to the same sequence up  to the first entry, so that $\delta=k/D$  for some $k\in \N$. Since $g_\sn$ and $g_{\sn'}$ land together, by Remark~\ref{Intersecting rays} we have that $k=1$.
This means that  the dynamic rays of addresses $\s_n=j\a_n$ and $\sn'=(j+1)\a_n'$ belong to the same $\AA_{x_n}$, i.e. land together for all  $j\in \Z$.

These pairs of rays divide $\C$ into infinitely many regions exactly one of which contains a left half plane, and which  we call $V_n$ (see Figure~\ref{E}).
\begin{small}
\begin{figure}[hbt!]
\begin{center}
\def\svgwidth{12cm}

\begingroup
  \makeatletter
  \providecommand\color[2][]{
    \renewcommand\color[2][]{}%
  }
  \providecommand\transparent[1]{
    \renewcommand\transparent[1]{}%
  }
  \providecommand\rotatebox[2]{#2}
  \ifx\svgwidth\undefined
    \setlength{\unitlength}{1639.1pt}
  \else
    \setlength{\unitlength}{\svgwidth}
  \fi
  \global\let\svgwidth\undefined
  \makeatother
  \begin{picture}(1,0.60939381)%
    \put(0,0){\includegraphics[width=\unitlength]{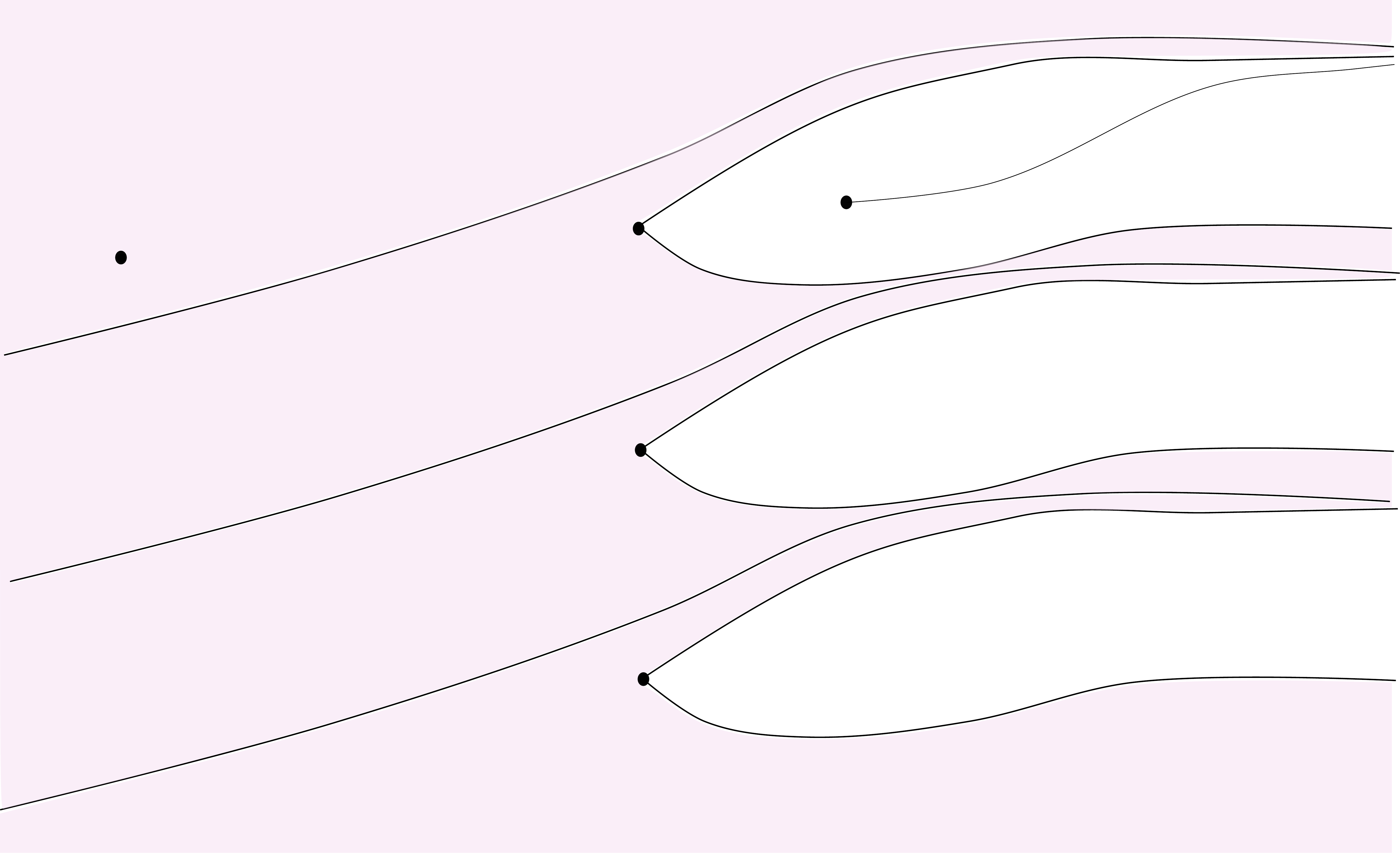}}%
    \put(0.07678578,0.4412187){\color[rgb]{0,0,0}\makebox(0,0)[lb]{\smash{$y\in \Lambda$}}}%
    \put(0.07557497,0.33927803){\color[rgb]{0,0,0}\makebox(0,0)[lb]{\smash{$j \tilde{\s}$}}}%
    \put(0.07482819,0.17348796){\color[rgb]{0,0,0}\makebox(0,0)[lb]{\smash{$(j-1)\tilde{\s}$}}}%
    \put(0.07975845,0.02208029){\color[rgb]{0,0,0}\makebox(0,0)[lb]{\smash{$(j-2)\tilde{\s}$}}}%
    \put(0.32224566,0.33891314){\color[rgb]{0,0,0}\makebox(0,0)[lb]{\smash{$V_n$}}}%
    \put(0.72770487,0.51612741){\color[rgb]{0,0,0}\makebox(0,0)[lb]{\smash{$\a$}}}%
    \put(0.84361207,0.47124986){\color[rgb]{0,0,0}\makebox(0,0)[lb]{\smash{$j\a_n$}}}%
     \put(0.57,0.47124986){\color[rgb]{0,0,0}\makebox(0,0)[lb]{\smash{$c$}}}%
    \put(0.84254144,0.37619707){\color[rgb]{0,0,0}\makebox(0,0)[lb]{\smash{$j\a_n'$}}}%
    \put(0.8365405,0.30998162){\color[rgb]{0,0,0}\makebox(0,0)[lb]{\smash{$(j-1)\a_n$}}}%
    \put(0.83897056,0.20741381){\color[rgb]{0,0,0}\makebox(0,0)[lb]{\smash{$(j-1)\a_n'$}}}%
    \put(0.83386271,0.13870846){\color[rgb]{0,0,0}\makebox(0,0)[lb]{\smash{${(j-2)\a_n}$}}}%
    \put(0.42186804,0.41961005){\color[rgb]{0,0,0}\makebox(0,0)[lb]{\smash{$x_n$}}}%
  \end{picture}%
\endgroup

\end{center}
\caption{\small Illustration to the proof of Proposition~\ref{Finiteness exp}; the true picture is invariant under translation by $2\pi i $. The rays are labeled by their addresses. The region $V_n$ is shaded. }\label{E}
\end{figure}
\end{small}
 
 \noindent Let $Q=\bigcap \ov{V_n}$. Since for each $n\in\N$ there is  $x_n\in\Lambda\cap\ov{V_n}$ and  $\Lambda$ is compact  we have that  $Q\cap \Lambda\neq\emptyset$ and contains at least a point $y\in\C$. Observe that by (3) in Theorem~\ref{Existence of dynamic rays} and since rays do not intersect, a dynamic ray of address $\s$ is contained in $V_n$ if and only if  $j\an'<\s< j\an$ for some $j\in\Z$. Taking the limit as $n\ra\infty$, and recalling that 
 $\lim \a_n=\lim \a_n'=\a$, we have that no dynamic  rays can intersect $Q$ except for the dynamic rays of address $j\a$, $j\in\N$. However, by  Corollary~\ref{Accessibility of postsingular set} the singular value is accessible by some  dynamic ray $g_\stilde$, whose countably many preimages $g_{j\stilde}$ intersect any left half plane: it follows that $\stilde=\a$ and that none of the dynamic rays of address $j\a$ is landing. This contradicts the fact that $\Lambda\cap Q\neq\emptyset$ and that points in $\Lambda$ are accessible by Theorem~\ref{Accessibility of hyperbolic sets exp}. 
\end{proof}

\subsection{A remark about parabolic wakes}
\label{A remark about parabolic wakes}

 Theorem~\ref{Non-recurrent parameters} implies that non-recurrent parameters with bounded post-singular set always belong to parabolic wakes (see \cite{Re1}, Proposition 4 and 5, for the definition of parabolic wakes and the relation between parabolic wakes and landing of rays in the dynamical plane; see also \cite{RS1}).

 There is a combinatorial proof of this fact (see e.g. in \cite{S1}) which can be adapted to the exponential case once Theorem~\ref{Non-recurrent parameters} is known. 
\begin{cor}[Corollary of Theorem~\ref{Non-recurrent parameters}]\label{Non-recurrent parameters in wakes}
A  non-recurrent parameter $c$ with bounded postsingular set for the exponential family is contained in a parabolic wake attached to the boundary of  the period one hyperbolic component $W_0$.
\end{cor}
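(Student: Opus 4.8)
The plan is to follow the combinatorial strategy of \cite{S1} (carried out there for unicritical polynomials) adapted to the exponential family, using Theorem~\ref{Non-recurrent parameters} as the essential input which is not available in the polynomial-to-exponential transcription without it. First I would recall the parameter-plane setup from \cite{Re1}: the period-one hyperbolic component $W_0$ is the set of parameters $c$ for which $f_c(z)=e^z+c$ has an attracting fixed point, and to each boundary point of $W_0$ at which the multiplier is a root of unity $e^{2\pi i p/q}$ one attaches a \emph{parabolic wake} $\WW_{p/q}$, characterized (by \cite{Re1}, Propositions~4 and 5) in terms of the combinatorics of the two periodic dynamic rays landing at the parabolic/repelling cycle born from the bifurcation. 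The goal is to show that every non-recurrent $c$ with bounded postsingular set lies in some such wake.

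The key steps, in order, would be: (1) Given such a $c$, note that $f_c$ has no Siegel disks (stated in the excerpt, following \cite{RvS}, Corollary 2.10) and $c$ is non-recurrent, so all periodic cycles are repelling or parabolic, and in particular $f_c$ has a fixed point $\alpha$ which is repelling or parabolic. By Theorem~\ref{Non-recurrent parameters} (in the repelling case) together with the known landing of periodic rays at parabolic points, $\alpha$ is the landing point of a finite nonempty collection of periodic dynamic rays, all of the same period $q\geq 1$, permuted cyclically by $f_c$. (2) Read off from this configuration a combinatorial \emph{rotation number} $p/q$: the finitely many rays landing at $\alpha$ inherit a cyclic order from their addresses at infinity (via the asymptotics in Theorem~\ref{Existence of dynamic rays}), and $f_c$ acts on them as a combinatorial rotation, exactly as in the polynomial case. (3) Invoke the combinatorial machinery of \cite{S1}: the existence of a repelling (or parabolic) fixed point with this ray portrait forces $c$ to lie in the parabolic wake $\WW_{p/q}$ attached to $\partial W_0$, because the two rays bounding $\WW_{p/q}$ in parameter space correspond, in the dynamical plane of every $c\in\WW_{p/q}$, precisely to (two of) the rays landing at the characteristic fixed point, and conversely the presence of such a dynamical ray portrait is equivalent to membership in the wake (this is the "wake $\Leftrightarrow$ ray portrait" dictionary of \cite{Re1}, \cite{RS1}). (4) Handle the boundary case: if $\alpha$ is parabolic rather than repelling, $c$ lies on the root of $\WW_{p/q}$, i.e. on the boundary of $W_0$, which is still "contained in" the closed wake; if $q=1$ and $\alpha$ is the linearizable fixed point (only at $c\in W_0$ itself or at $c$ with $|\text{multiplier}|=1$ but no root of unity — excluded by absence of Siegel disks and the non-recurrence/boundedness hypothesis), one checks directly that these degenerate cases either do not occur or place $c$ in $\overline{W_0}$.

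The main obstacle I expect is step (3): transporting the parameter-plane combinatorics of \cite{S1} to the exponential family rigorously. In \cite{S1} the argument relies on the structure of the unicritical parameter space and on the precise correspondence between dynamical ray portraits and parameter wakes; for exponentials one must instead cite the exponential analogue developed by Rempe and Schleicher (\cite{Re1}, \cite{RS1}), and verify that the ray portrait produced by Theorem~\ref{Non-recurrent parameters} — in particular the fact that the addresses of the landing rays are bounded and periodic of a common period — is exactly the kind of data their dictionary consumes. Everything else (the cyclic order at infinity, the rotation-number extraction, the absence of Siegel disks) is either routine or already recorded in the excerpt, so the corollary follows once this combinatorial bridge is in place; I would therefore keep the proof short, presenting steps (1)–(2) in a few lines and then referring to \cite{S1} and \cite{Re1} for step (3), exactly as the surrounding text promises ("There is a combinatorial proof of this fact ... which can be adapted to the exponential case once Theorem~\ref{Non-recurrent parameters} is known").
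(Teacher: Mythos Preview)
Your overall strategy---extract a ray portrait at a fixed point and invoke the wake dictionary of \cite{Re1}, \cite{RS1}---is the right idea, but there is a genuine gap in step (1) that your step (4) does not repair. An exponential map has \emph{infinitely many} fixed points, and for a generic one of them exactly one fixed ray lands there ($q=1$); such a portrait carries no wake information. You never specify \emph{which} fixed point $\alpha$ you are using, and your discussion of the $q=1$ case conflates ``$\alpha$ has a single fixed ray'' with ``$\alpha$ is linearizable/attracting'', which are unrelated conditions. So as written, nothing prevents you from picking one of the trivial fixed points and getting stuck.

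The paper's proof resolves exactly this issue, and by a different mechanism than your steps (2)--(3). It argues by contradiction: assume $c$ lies in no wake, pick $c_0\in W_0$, and analytically continue the attracting fixed point $\alpha(c_0)$ along a path to obtain a distinguished fixed point $\alpha(c)$; simultaneously continue the fixed rays $g^{c_0}_\s$ and their landing points $z_\s(c_0)$. By Theorem~\ref{Non-recurrent parameters}, $\alpha(c)$ is the landing point of some periodic ray. If that ray had period $q\geq 2$, the resulting pair of rays would already place $c$ in a wake, contradicting the assumption; hence a \emph{fixed} ray lands at $\alpha(c)$. But every fixed ray $g^c_\s$ lands at the continued point $z_\s(c)\neq\alpha(c)$, so no fixed ray is available for $\alpha(c)$---contradiction. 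Thus the paper replaces your direct rotation-number extraction with (i) analytic continuation to single out the correct fixed point, and (ii) a counting argument showing all fixed rays are already spoken for. Your approach can be salvaged by inserting precisely this identification of $\alpha$, but that is the heart of the argument, not a boundary case.
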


\begin{proof}[Sketch of proof]
Let $c_0\in W_0$ be a hyperbolic parameter with an attracting fixed point $\alpha(c_0)$. Observe that for any  fixed address $\s$, the ray $g^{c_0}_\s$ lands  at a fixed point $z_{\s}(c_0)$. Let $\gamma$ be a curve joining $c$ to $c_0$ such that all $z_{\s}(c_0)$ can be continued analytically together with their landing rays. Call  $\alpha(c)$  the analytic continuation along $\gamma$ of the attracting fixed point $\alpha(c_0)$, $z_{\s}(c)$ the analytic continuation of $z_{\s}(c_0)$.
   Suppose  that $c$ does not belong to any parabolic wake attached to $W_0$.  By Theorem A, $\alpha(c)$ is the landing point of at least one periodic  ray, which is necessarily fixed (otherwise, there would be at least two periodic rays landing at $\alpha(c)$ determining a parabolic wake). But this gives a contradiction, because for any  fixed address $\s$ the fixed ray $g_\s^c$ lands at the point $z_\s(c)\neq \alpha(c)$.
\end{proof}

\begin{small}

\end{small}
\end{document}